\DeclareMathAlphabet{\dutchcal}{U}{dutchcal}{m}{n}
\newcommand{\dcF}{\mathcal{F}}
\newcommand{\dcf}{\dutchcal{f}}
\newcommand{\dcg}{\dutchcal{g}}
\newcommand{\dch}{\dutchcal{h}}
\newcommand{\dck}{\dutchcal{k}}
\newcommand{\dcq}{\dutchcal{q}}
\newcommand{\dcI}{\mathcal{I}}
\newcolumntype{L}[1]{>{\raggedright\let\newline\\\arraybackslash\hspace{0pt}}m{#1}}
\newcolumntype{C}[1]{>{\centering\let\newline\\\arraybackslash\hspace{0pt}}m{#1}}
\newcolumntype{R}[1]{>{\raggedleft\let\newline\\\arraybackslash\hspace{0pt}}m{#1}}
\tikzset{ 
    ncbar angle/.initial=90,
    ncbar/.style={
        to path=(\tikztostart)
        -- ($(\tikztostart)!#1!\pgfkeysvalueof{/tikz/ncbar angle}:(\tikztotarget)$)
        -- ($(\tikztotarget)!($(\tikztostart)!#1!\pgfkeysvalueof{/tikz/ncbar angle}:(\tikztotarget)$)!\pgfkeysvalueof{/tikz/ncbar angle}:(\tikztostart)$)
        -- (\tikztotarget)
    },
    ncbar/.default=0.5cm,
}
\tikzset{square left brace/.style={ncbar=1.5mm}}
\tikzset{square right brace/.style={ncbar=-1.5mm}}
\tikzset{round left paren/.style={ncbar=0.5cm,out=120,in=-120}}
\tikzset{round right paren/.style={ncbar=0.5cm,out=60,in=-60}}
\newtheoremstyle{theoremstyle}
{10pt}      
{5pt}       
{\itshape}  
{}          
{\bfseries} 
{.}         
{ }      
{}          
\newtheoremstyle{algorithmstyle}
{10pt}      
{5pt}       
{}  
{}          
{\bfseries} 
{.}         
{ }      
{}          
\newtheoremstyle{examplestyle}
{10pt}      
{5pt}       
{}          
{}          
{\bfseries} 
{.}         
{ }      
{}          
\newtheorem*{rep@theorem}{\rep@title}
\newcommand{\newreptheorem}[2]{%
\newenvironment{rep#1}[1]{%
 \def\rep@title{#2 \ref{##1}}%
 \begin{rep@theorem}}%
 {\end{rep@theorem}}}
\newcommand{\subalign}[1]{%
  \vcenter{%
    \Let@ \restore@math@cr \default@tag
    \baselineskip\fontdimen10 \scriptfont\tw@
    \advance\baselineskip\fontdimen12 \scriptfont\tw@
    \lineskip\thr@@\fontdimen8 \scriptfont\thr@@
    \lineskiplimit\lineskip
    \ialign{\hfil$\m@th\scriptstyle##$&$\m@th\scriptstyle{}##$\hfil\crcr
      #1\crcr
    }%
  }%
}
\theoremstyle{theoremstyle}
\newtheorem{theorem}{Theorem}[section]
\newtheorem{lemma}[theorem]{Lemma}
\newtheorem{proposition}[theorem]{Proposition}
\newtheorem{corollary}[theorem]{Corollary}
\theoremstyle{examplestyle}
\newtheorem{example}[theorem]{Example}
\newtheorem{definition}[theorem]{Definition}
\newtheorem*{plan*}{Plan}
\newtheorem{remark}[theorem]{Remark}
\newtheorem{convention}[theorem]{Convention}
\theoremstyle{algorithmstyle}
\newtheorem{algorithm}[theorem]{Algorithm}
\numberwithin{equation}{section}
\definecolor{darkorange}{rgb}{1.0, 0.55, 0.0}
\definecolor{darkblue}{rgb}{0.0, 0.0, 0.55}
\definecolor{darkgreen}{rgb}{0.0, 0.2, 0.13}
\definecolor{darkred}{rgb}{0.75, 0.0, 0.0}
\newcommand{\CC}{\mathbb{C}}
\newcommand{\RR}{\mathbb{R}}
\newcommand{\QQ}{\mathbb{Q}}
\newcommand{\ZZ}{\mathbb{Z}}
\newcommand{\suchthat}{\;\ifnum\currentgrouptype=16 \middle\fi|\;}
\newcommand{\bigmid}{\left.\vphantom{\Big\{} \suchthat \vphantom{\Big\}}\right.}
\newcommand{\bigslant}[2]{{\raisebox{.2em}{$#1$}\left/\raisebox{-.2em}{$#2$}\right.}}
\newcommand{\lin}{{\mathrm{lin}}}
\newcommand{\nlin}{{\mathrm{nlin}}}
\newcommand{\bin}{{\mathrm{bin}}}
\newcommand{\CCt}{\mathbb{C}\{\!\{t\}\!\}}
\DeclareMathOperator{\initial}{in}
\DeclareMathOperator{\mult}{mult}
\DeclareMathOperator{\MV}{MV}
\DeclareMathOperator{\trop}{trop}
\DeclareMathOperator{\Trop}{Trop}
\DeclareMathOperator{\val}{val}
\newcommand\restr[2]{{\left.\kern-\nulldelimiterspace #1 \right|_{#2}}}
\newcommand{\oset}[3][0ex]{%
  \mathrel{\mathop{#3}\limits^{
    \vbox to#1{\kern-2\ex@
    \hbox{$\scriptstyle#2$}\vss}}}}
\newcommand{\uset}[3][0ex]{%
  \mathrel{\mathop{#3}\limits_{
    \vbox to#1{\kern-7\ex@
    \hbox{$\scriptstyle#2$}\vss}}}}
\newcommand{\customlabel}[2]{%
   \protected@write \@auxout {}{\string \newlabel {#1}{{#2}{\thepage}{#2}{#1}{}} }%
   \hypertarget{#1}{#2}%
}
\begin{document}

\title[A tropical method for solving
parametrized polynomial systems]{A tropical method for solving \\ parametrized polynomial systems}

\author{Paul Alexander Helminck}
\address{Mathematical Institute, Tohoku University, Japan.}
\email{paul.helminck.a6@tohoku.ac.jp}
\urladdr{https://paulhelminck.wordpress.com}

\author{Oskar Henriksson}
\address{Department of Mathematical Sciences, University of Copenhagen, Denmark.}
\email{oskar.henriksson@math.ku.dk}
\urladdr{https://oskarhenriksson.se}

\author{Yue Ren}
\address{Department of Mathematical Sciences, Durham University, United Kingdom.}
\email{yue.ren2@durham.ac.uk}
\urladdr{https://yueren.de}




\begin{abstract}
    We give a framework for constructing generically optimal homotopies for parametrized polynomial systems from tropical data.  Here, generically optimal means that the number of paths tracked is equal to the generic number of solutions. We focus on two types of parametrized systems -- vertically parametrized and horizontally parametrized systems -- and discuss techniques for computing the tropical data efficiently. We end the paper with several case studies, where we analyze systems arising from chemical reaction networks, coupled oscillators, and rigid graphs.
\end{abstract}

\maketitle

\section{Introduction}
Solving systems of polynomial equations is a fundamental task throughout applied mathematics; for instance, polynomials govern the motion of robots \cite{SommeseWampler05}, the phase of coupled oscillators \cite{CMMN19}, and the concentrations of species in  biological systems \cite{Dickenstein16}.
A staple for solving polynomial systems numerically over the complex numbers is \emph{homotopy continuation} \cite{BBCHLS2023}, which traces the solutions of an easy-to-solve start system to the desired solutions of the target system along a path in the space of polynomial systems, commonly called a \emph{homotopy}.

Homotopy continuation is known for being able to compute a single solution to a polynomial system in average polynomial time, thereby answering Smale's 17th problem in the positive \cite{BP11,BC11,Lairez17}.  However, constructing homotopies for computing all solutions to a polynomial system remains a major challenge.

Ideally, a homotopy should be both fast to construct and \emph{optimal} in the sense that the number of paths equals the number of solutions of the target system.  However, constructing optimal homotopies requires a priori knowledge of the number of solutions of the target system, which is difficult to obtain efficiently.  This means that in practice, there is a tradeoff between the speed of construction and minimizing the number of superfluous paths.

For instance, computing the upper bound on the number of solutions given by B\'ezout's theorem is fast, as it only requires multiplying degrees. The (normalized) mixed volume bound given by Bernstein's theorem \cite{Bernstein1975}, on the other hand, will often be lower than the B\'ezout bound for sparse systems, but is substantially harder to compute.
The actual number of solutions can, in principle, be computed through a Gr\"obner basis computation, but in this case, homotopy continuation becomes irrelevant since there are more effective solvers available if given a Gr\"obner basis, such as eigenvalue solvers (see \cite[Chapter~2]{CoxLittleOShea05} for an introduction, and  \cite[Section~2.1]{Cox2020} for an overview of recent progress).

The mixed volume is known to be a sharp bound provided that the coefficients of the system are generic, but for many parametrized systems that arise in applications, relations among the coefficients lead to fewer than mixed volume many solutions (see, e.g., \cite{Gross2016wnt} and \cite{BBPMZ23}).  This has prompted a wealth of works on finding upper bounds on the number of solutions under weaker genericity assumptions, involving techniques such as tropical geometry \cite{HelminckRen22,HoltRen2023}, Khovanskii bases and Newton--Okounkov bodies \cite{KavehKhovanskii12,OW22,BBPMZ23}, and toric geometry and facial subsystems \cite{BKKSS21,BreidingSottileWoodcock22,LindbergMoninRose23}.

Given a bound on the number of solutions, it is a problem in its own right to construct a homotopy (or a collection of homotopies) that trace precisely that many paths. For Bernstein's mixed volume bound, this problem was solved by Huber and Sturmfels in their seminal paper \cite{HuberSturmfels95} in the form of a construction called \emph{polyhedral homotopies} (see also \cite{VerscheldeVerlindenCools94,Li99}). Since their introduction, polyhedral homotopies have become a popular default strategy in several systems such as \textsc{HomotopyContinuation.jl} \cite{HomotopyContinuation.jl}, \textsc{PHCpack} \cite{PHCpack}, and \textsc{Hom4PS} \cite{Hom4PS}. Recent work on constructing better homotopies include \cite{LeykinYu2019} (using tropical geometry), \cite{BurrSottileWalker23} (using Khovanskii bases), and \cite{DuffTelenWalkerYahl24} (using toric geometry).

In this paper, we propose a generalization of polyhedral homotopies for constructing homotopies that realize the tropical root bounds from \cite{HelminckRen22,HoltRen2023}, building on and extending ideas from Leykin and Yu \cite{LeykinYu2019}.

\subsection*{Contents of the paper and links to the existing literature}
\cref{sec:background} goes through the necessary theoretical background on parametrized polynomial systems and tropical geometry.

The main goal of \cref{sec:tropicalHomotopies} is to describe a natural generalization of polyhedral homotopies.  \cref{alg:tropicalHomotopies} describes how to construct generically optimal start systems and homotopies for solving a parametrized (Laurent) polynomial system $$\dcF=\{\dcf_1,\dots,\dcf_n\}\subseteq \CC[a_1,\dots,a_m][x_1^\pm,\dots,x_n^\pm]$$ for a generic choice $P\in(\CC^*)^m$ of parameters, using the following tropical data:
\begin{enumerate}
\item The zero-dimensional tropicalization $\Trop(\langle \dcF_Q\rangle)$, where $\dcF_Q$ is the system specialized at a perturbation $Q:=(t^{v_1}P_1,\ldots,t^{v_m}P_m)\in\CCt^m$ of the parameters, for generic exponents $v\in\QQ^m$.
\item The zeros of the initial ideals $V(\initial_w(\langle \dcF_Q\rangle))\subseteq(\CC^\ast)^n$ for all $w\in\Trop(\langle \dcF_Q\rangle)$.
\end{enumerate}
\cref{alg:tropicalHomotopies} builds on the same connection between tropical geometry and polynomial system solving that polyhedral homotopy rests on:
Consider $\dcF_Q$ as a one-parameter family of polynomial systems with parameter $t$ and variables $x$, whose specialization at $t=1$ equals a given system to be solved.  By the Newton--Puiseux theorem, solutions around $t=0$ are parametrized by Puiseux series, and consequently, their convergence or divergence at $t=0$ is governed by their coordinatewise valuations.  In \cite{HuberSturmfels95}, the coordinatewise valuations are computed using mixed cells, whereas in our work, they will be computed from tropical stable intersections.

The objective of Sections \ref{sec:verticalFamilies} and \ref{sec:horizontalFamilies} is to explain how \cref{alg:tropicalHomotopies} can be used to obtain optimal homotopies efficiently for two types of parametrized polynomial systems:

\cref{sec:verticalFamilies} considers \emph{vertically parametrized} polynomial systems and describes how to obtain their tropical data efficiently. Vertical systems arise for example from chemical reaction networks, see \cite{Dickenstein16} for a general introduction and \cite{FeliuHenrikssonPascual23} for a writeup closer to the language of this article.  Similar systems also arise from Lagrangian systems in polynomial optimization such as the maximum likelihood estimations for log-linear models, where tropical techniques have been applied in \cite{BDH24}.  We employ the idea in \cite[Section 6.1]{HelminckRen22}, explain how the tropical data above can be computed from the intersection of a tropical linear space and a tropical binomial variety, and discuss the computational challenges involved in doing so.

\cref{sec:horizontalFamilies} considers \emph{horizontally parametrized} polynomial systems as in the works of Kaveh and Khovanskii \cite{KavehKhovanskii12}.  Obtaining the required tropical data for horizontal systems is a highly non-trivial task, but we discuss two techniques that cover many systems that arise in practice:
\begin{enumerate}
\item Identifying a \emph{tropically transverse base} for the polynomial support (as in \cite[Example 6.12]{HelminckRen22} and \cite{HoltRen2023}). This is the topic of \cref{sec:horzizontalFamilyTransverse}.
\item Embedding the family into a larger family, by introducing parameters into the polynomial support. This might make the resulting root bound larger, while still being an improvement compared to the Bernstein bound. This is the topic of \cref{sec:horziontalFamilyRelaxation}.
\end{enumerate}
Both these techniques result in new, Bernstein generic systems, in such a way that an adapted version of polyhedral homotopies can be used.

The purpose of \cref{sec:caseStudies} is to demonstrate that our techniques can be applied to several examples from the existing literature:
\begin{enumerate}
\item In \cref{sec:WNT}, we examine steady state equations of the WNT pathway \cite{Gross2016wnt} by relaxing it to a vertically parametrized system.
\item In \cref{sec:Duffing}, we regard the equations for Duffing oscillators \cite{BBPMZ23} as horizontally parametrized systems with transverse base.
\item In \cref{sec:Kuramoto}, we consider the Kuramoto equations with phase delays \cite{ChenKorchevskaiaLindberg2022} as relaxed horizontally parametrized systems.
\item In \cref{sec:graphRigidity}, we study the realizations of a Laman graph \cite{cggkls}.
\end{enumerate}

In \cref{sec:conclusion}, we summarize our results and outline future research direction.

A \textsc{Julia} implementation of our algorithm based on OSCAR \cite{OSCAR} and \textsc{HomotopyContinuation.jl} \cite{HomotopyContinuation.jl}, as well as code for the examples appearing in the paper, can be found in the repository
\begin{center}
\url{https://github.com/oskarhenriksson/TropicalHomotopies.jl}\,.
\end{center}

\subsection*{Acknowledgments}
The authors would like to thank Elisenda Feliu, Máté L. Telek, and Benjamin Schr\"oter for their involvement in the early phases of this project, as well as for helpful comments and discussions. The authors also thank Paul Breiding and Sascha Timme for help with \textsc{HomotopyContinuation.jl}.

Paul Helminck was supported by the UKRI Future Leaders Fellowship ``Tropical Geometry and its applications'' (MR/S034463/2), and is supported by a JSPS Postdoctoral Fellowship (Grant No. 23769) and KAKENHI 23KF0187. Oskar Henriksson is partially supported by the Novo Nordisk project (NNF20OC0065582), as well as the European Union under the Grant Agreement~no.~101044561, POSALG. Views and opinions expressed are those of the authors only and do not necessarily reflect those of the European Union or European Research Council (ERC). Neither the European Union nor ERC can be held responsible for them. Yue Ren is supported by the UKRI Future Leaders Fellowship ``Tropical Geometry and its applications'' (MR/S034463/2 \& MR/Y003888/1).


\section{Background}\label{sec:background}

In this section we briefly recall some basic concepts of parametrized polynomial systems and tropical geometry that are of immediate interest to us.

\begin{convention}
  For the remainder of the article, we fix:
  \begin{enumerate}[leftmargin=*]
  \item An algebraically closed field $K$ of characteristic 0 with a possibly trivial valuation $\val\colon K^\ast\rightarrow\RR$, and residue field $\mathfrak K$. By \cite[Lemma 2.1.15]{MaclaganSturmfels15} there exists a splitting $\val(K^\ast) \rightarrow K^\ast$, which we assume to be fixed.  For example:
    \begin{enumerate}
    \item For the field of complex numbers $K=\CC$ and the trivial valuation ${\val\colon\CC^\ast\rightarrow \RR}$, a possible splitting is the map $\val(\CC^\ast)=\{0\}\rightarrow \CC^\ast$, $0\mapsto 1$,
    \item For the field of complex Puiseux series $K=\CC\{\!\{t\}\!\}$ and the usual valuation $\val\colon \CC\{\!\{t\}\!\}^\ast\rightarrow\RR$, a possible splitting is $\val(K^\ast)=\QQ \rightarrow K^\ast, \lambda\mapsto t^\lambda$.
    \end{enumerate}
    Following the notation of \cite{MaclaganSturmfels15}, we  denote the image of $\lambda \in \val(K^\ast)$ under the splitting as $t^\lambda$.
  \item An $m$-dimensional affine space $K^m$ with coordinate ring $K[a]\coloneqq K[a_1,\dots,a_m]$ and field of fractions $K(a)\coloneqq K(a_1,\dots,a_m)$. We refer to the $a$ as \emph{parameters}, $K^m$ as the \emph{parameter space} and points $P\in K^m$ as \emph{choices of parameters}.
  \item An $n$-dimensional torus $(K^\ast)^n$ with coordinate ring $K[x^\pm]\coloneqq K[x_{1}^{\pm},\dots,x_{n}^{\pm}]$.  We refer to the $x$ as \emph{variables}.
  \item We write $K^m\times (K^\ast)^n$ for the product variety with coordinate ring $K[a][x^\pm]\coloneqq K[a_1,\dots,a_m][x_1^\pm,\dots,x_n^\pm]$.
    We refer to elements $\dcf\in K[a][x^\pm]$ as \emph{parametrized (Laurent) polynomials}, ideals $\dcI\subseteq K[a][x^\pm]$ as \emph{parametrized (Laurent) polynomial ideals}, and finite sets $\{\dcf_1,\dots,\dcf_k\}\subseteq K[a][x^\pm]$ as \emph{parametrized (Laurent) polynomial systems}.
  \end{enumerate}
\end{convention}

\subsection{Parametrized polynomial systems}
In this section we recall some basic concepts of parametrized polynomial systems over algebraically closed fields.

\begin{definition}
  Let $\dcf\in K[a][x^\pm]$ be a parametrized polynomial, say $\dcf=\sum_{\alpha\in\ZZ^n}c_\alpha x^\alpha$ with $c_\alpha\in K[a]$ and $x^\alpha\coloneqq x_{1}^{\alpha_{1}}\cdots x_{n}^{\alpha_{n}}$ for $\alpha=(\alpha_{1},\dots,\alpha_{n})\in\ZZ^n$. For any choice of parameters $P\in K^m$ we define the \emph{specialization} of $\dcf$ at $P$ to be
  \begin{equation*}
    \dcf_{P}\coloneqq\sum_{\alpha\in\ZZ^n}c_\alpha(P)x^\alpha\in K[x^\pm].
  \end{equation*}
  Similarly, for a parametrized polynomial ideal $\dcI\subseteq K[a][x^\pm]$  and $P\in K^m$,  we define the \emph{specialization} of $\dcI$ at $P$ to be
  \begin{equation*}
    \dcI_{P}\coloneqq\langle \dcf_{P} : \dcf\in I \rangle\subseteq K[x^\pm].
  \end{equation*}
  Let $B_P=K[x^\pm]/\dcI_P$ be its coordinate ring.
  The \emph{root count} of $\dcI$ at $P$ is the vector space dimension $\ell_{\dcI,P}\coloneqq \dim_K(B_P)\in\ZZ_{\geq 0}\cup\{\infty\}$.
\end{definition}

\begin{remark}
  The integer $\ell_{\dcI,P}$ is the number of points in the variety $V(\dcI_{P})\subseteq(K^*)^n$ counted with a suitable multiplicity \cite[Corollary 2.5]{CoxLittleOShea05}.
  In particular, if $\dcI_{P}$ is zero-dimensional and radical, we have $\ell_{\dcI,P}=|V(\dcI_{P})|$ \cite[Corollary 2.6]{CoxLittleOShea05}.
\end{remark}

\begin{definition}
  \label{def:genericSpecialization}
  The \emph{generic specialization} of a parametrized polynomial ideal $\dcI\subseteq K[a][x^\pm]$ is the ideal $\dcI_{K(a)}\subseteq K(a)[x^\pm]$ generated by $\dcI$ under the inclusion $K[a][x^\pm]\subseteq K(a)[x^\pm]$.
  The quotient ring $B_{K(a)}\coloneqq K(a)[x^\pm]/\dcI_{K(a)}$ is a vector space over the field $K(a)$, and the \emph{generic root count} of $\dcI$ is its vector space dimension
  \[\ell_{\dcI,K(a)}:=\mathrm{dim}_{K(a)}(B_{K(a)})\in\ZZ_{\geq 0} \cup\{\infty\}. \]
  The \emph{generic dimension} of $\dcI$ is the Krull dimension of $B_{K(a)}$. If the generic dimension is zero (equivalently, if $\ell_I<\infty$), we say that $\dcI$ is \emph{generically zero-dimensional}.
  We say that $\dcI$ is \emph{generically a complete intersection} if  $B_{K(a)}\cong K(a)[z_{1},...,z_{r}]/\langle \dcf_{1},...,\dcf_{k}\rangle$ for polynomials $\dcf_1,\ldots,\dcf_k\in K(a)[z_1,\ldots,z_r]$ and $\dim(B_{K(a)})=r-k$.

  The \emph{generic root count} and \emph{generic dimension} of a parametrized polynomial system $\dcF\subseteq K[a][x^\pm]$ are the generic root count and generic dimension of the parametrized ideal it generates.  We say that $\dcF$ is \emph{generically zero-dimensional} if the parametrized ideal it generates is generically zero-dimensional.
\end{definition}

\begin{remark}\label{rem:genericProperties}\ \samepage
  \begin{enumerate}
  \item The properties in \cref{def:genericSpecialization} are generic in the sense that they reflect the behavior over a Zariski-dense open subset of $K^m$.  For example, if $\dcI$ is generically zero-dimensional, then there is a Zariski-dense open subset $U\subseteq K^m$ such that $\ell_{\dcI,P}=\ell_{\dcI,K(a)}$ for all $P\in U$ \cite[Remark 2.4]{HelminckRen22}.
  \item The generic root count $\ell_{\dcI,K(a)}$ is invariant under field extensions.  In particular, if $\dcI\subseteq \CC[a][x^\pm]$ is a parametrized polynomial ideal over the complex numbers, and $\tilde{\dcI}=\langle \dcI\rangle\subseteq \CCt[a][x^\pm]$ is the parametrized polynomial ideal over the complex Puiseux series that is generated by the elements of $\dcI$, then their generic root counts coincide, $\ell_{\dcI,\CC(a)}=\ell_{\tilde{\dcI},\CCt(a)}$.

    Consequently, from \cref{sec:tropicalHomotopies} onward, we may consider all parametrized polynomial systems over $\CC$ as parametrized polynomial systems over $\CCt$ without changing their generic root count.
  \end{enumerate}
\end{remark}

We will close this subsection with the known result that embedding para\-metrized polynomial systems can only raise the generic root count.  This is relevant for \cref{sec:horizontalFamilies}, where we embed a difficult family of polynomial system into an easier larger family of polynomial systems.  Moreover, it implies that the generic root count is an upper bound in the sense that $\ell_{\dcI,P}\leq \ell_{\dcI,K(a)}$ for any $P\in K^m$ where $\ell_{\dcI,P}$ is finite, provided $\dcI$ is generically zero-dimensional and a complete intersection.

\begin{definition}\label{def:EmbeddingSystem}
  Let $K[a][x^\pm]\coloneqq K[a_1,\dots,a_{m}][x_1^\pm,\dots,x_n^\pm]$ and $K[b][x^\pm]\coloneqq$\linebreak $K[b_1,\dots,b_{l}][x_1^\pm,\dots,x_n^\pm]$ be two parametrized polynomial rings with the same variables $x$ but different parameters $a$ and $b$. Let $\dcI_{1}\subseteq K[a][x^{\pm}]$ and $\dcI_{2}\subseteq K[b][x^{\pm}]$ be two parametrized ideals. We say $\dcI_{1}$ is \emph{embedded} in $\dcI_{2}$, if there is a ring homomorphism $K[b]\to K[a]$ such that $\dcI_1$ is the ideal generated by the image of $\dcI_2$ under the induced ring homomorphism $K[b][x^{\pm}]\to K[a][x^{\pm}]$.  Similarly, we call a system $\dcF_{1}\subseteq K[a][x^{\pm}]$ \emph{embedded} in a system $\dcF_{2}\subseteq K[b][x^{\pm}]$, if $\dcF_1$ is the image of $\dcF_2$ under the induced ring homomorphism.
\end{definition}

\begin{proposition}
  \label{pro:TheWellKnownProposition}
  Let $\dcI_{1}\subseteq K[a][x^{\pm}]$ and $\dcI_{2}\subseteq K[b][x^{\pm}]$ be two generic complete intersections.  If $\dcI_{1}$ is embedded in $\dcI_{2}$, then
  \begin{math}
    \ell_{\dcI_{1}}\leq \ell_{\dcI_{2}}.
  \end{math}
\end{proposition}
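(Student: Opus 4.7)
\begin{plan*}
The plan is to reduce the statement to a pointwise inequality of fiber lengths, and then to establish the latter via flatness of a suitable compactified family. The ring homomorphism $\varphi\colon K[b]\to K[a]$ underlying the embedding induces a morphism of parameter spaces $\Phi\colon\Spec K[a]\to\Spec K[b]$, and the identity $\dcI_1=\varphi(\dcI_2)\cdot K[a][x^{\pm}]$ immediately gives the fiber identity $(\dcI_1)_P=(\dcI_2)_{\Phi(P)}$ for every $P\in K^m$. For a generic $P$ one has $\ell_{\dcI_1,P}=\ell_{\dcI_1}$ by \cref{rem:genericProperties}, and since $\ell_{\dcI_1}<\infty$ the fiber $(\dcI_2)_{\Phi(P)}$ is zero-dimensional. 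Thus it suffices to prove the pointwise upper bound $\ell_{\dcI_2,Q}\le\ell_{\dcI_2}$ at the specific point $Q=\Phi(P)$.

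For the pointwise bound, I would compactify $V(\dcI_2)\subseteq\Spec K[b]\times(K^\ast)^n$ to a closed subscheme $\bar Z_2\subseteq\Spec K[b]\times\bar T$, where $\bar T$ is a smooth projective toric compactification of $(K^\ast)^n$ whose fan refines the normal fans of the Newton polytopes of a chosen generating set of $\dcI_2$. The hypothesis that $\dcI_2$ is generically a complete intersection of codimension $n$ makes $\bar Z_2$ Cohen--Macaulay of the expected dimension for this $\bar T$, so miracle flatness makes $\bar Z_2\to\Spec K[b]$ flat on the open locus where the fibers stay zero-dimensional. Joining $Q=\Phi(P)$ to a generic point $Q_0\in\Spec K[b]$ by a rational curve on which all fibers remain zero-dimensional, flatness forces the compactified fiber length along the curve to be constant, equal to $\ell_{\dcI_2}$ (using that for a generic $Q_0$ all solutions lie in the torus). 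Since the torus fiber $V((\dcI_2)_Q)$ is the subscheme of $(\bar Z_2)_Q$ obtained by removing the toric boundary, its length is at most $\ell_{\dcI_2}$.

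The main obstacle is the compactification step: one needs a $\bar T$ for which $\bar Z_2$ remains a complete intersection of the expected codimension (so miracle flatness applies) and for which the generic fiber has no solutions on the boundary. This can be achieved in the spirit of Bernstein's theorem by choosing the fan of $\bar T$ as a common refinement of the normal fans of the generators' Newton polytopes, but the bookkeeping is somewhat delicate. A more algebraic alternative would use the isomorphism $B_{1,K(a)}\cong K(a)\otimes_{K[b]}B_{2,K[b]}$ together with generic freeness to exhibit a monomial spanning set of $B_{1,K(a)}$ of cardinality $\ell_{\dcI_2}$; the subtle case there is when the denominator produced by generic freeness lies in $\ker\varphi$, which must be handled by perturbing $\varphi$ in an auxiliary parameter and passing to the limit.
\end{plan*}
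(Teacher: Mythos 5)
The paper's proof is a one-line citation to \cite[Lemma 5.2]{HelminckRen22}, so there is no internal argument in the paper to compare against; your plan is effectively a sketch of a proof of that cited lemma. The reduction to a pointwise bound via the fiber identity $(\dcI_1)_P=(\dcI_2)_{\Phi(P)}$ is correct, and it inverts the implication the paper records just before the proposition (that the proposition yields the pointwise bound $\ell_{\dcI,Q}\leq\ell_{\dcI,K(a)}$); the two formulations are equivalent, and your strategy is on target at that level. You should note, though, that you silently assume $\ell_{\dcI_1}<\infty$, which is an additional (but in context harmless) hypothesis beyond the complete-intersection assumption in the statement.

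The gap is in the compactification step, and it is a real one. For the miracle-flatness chain to give the bound, you need $Q=\Phi(P)$ to lie in the open locus where the \emph{compactified} fibers $(\bar Z_2)_Q$ are zero-dimensional; but what you actually know is that the \emph{torus} fiber $V((\dcI_2)_Q)$ is zero-dimensional, and this does not preclude $(\bar Z_2)_Q$ from acquiring positive-dimensional components supported entirely on the toric boundary of $\bar T$ (which is precisely what happens when a specialization shrinks Newton polytopes, so that facial subsystems degenerate). If $Q$ falls outside the flat locus, the length comparison along your rational curve is simply not available. The ``algebraic alternative'' via generic freeness shifts the same difficulty to the case $g\in\ker\varphi$, and ``perturb $\varphi$ and pass to the limit'' does not close it, because fiber dimension of a finitely generated module is upper semicontinuous — i.e.\ it can only jump \emph{up} at the special parameter — which is the wrong direction for an upper bound; to control the jump you again need a properness or flatness input, which is exactly what the compactification was supposed to supply. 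In short, the plan correctly identifies the approach and its obstacles, but the crucial step — arranging a $\bar T$ for which $\bar Z_2$ is flat over $Q$, or replacing the argument when it is not — is not resolved.
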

\begin{proof}
  Follows from \cite[Lemma 5.2]{HelminckRen22}.
\end{proof}

\begin{example}\label{ex:twoCircleEmbeddings}
  Consider the polynomial system $F\coloneqq \{f_1,f_2\}\subseteq \CC[x_1^\pm,x_2^\pm]$ given by
  \begin{align*}
    f_{1}=x^2+y^2+x+y+1 \quad\text{and}\quad f_{2}=3x^2+3y^2+5x+7y+11.
  \end{align*}
  The polynomials define two ellipses in the complex torus $(\CC^{*})^{2}$ intersecting in two points. \cref{fig:embeddedSystems} shows three different parametrized polynomial systems it can be embedded to. By Bernstein's Theorem, the generic root counts of $\dcF_{\mathrm{BKK}}$ is the mixed volume of the Newton polytopes, which is $4$.  One can show that the generic root counts of both $\dcF_{\mathrm{verti}}$ and $\dcF_{\mathrm{hori}}$ are $2$.  The systems $\dcF_{\mathrm{verti}}$ and $\dcF_{\mathrm{hori}}$ are examples of \emph{vertically} and \emph{horizontally parametrized system}, which are discussed in \cref{sec:verticalFamilies} and \cref{sec:horizontalFamilies}, respectively.
\end{example}

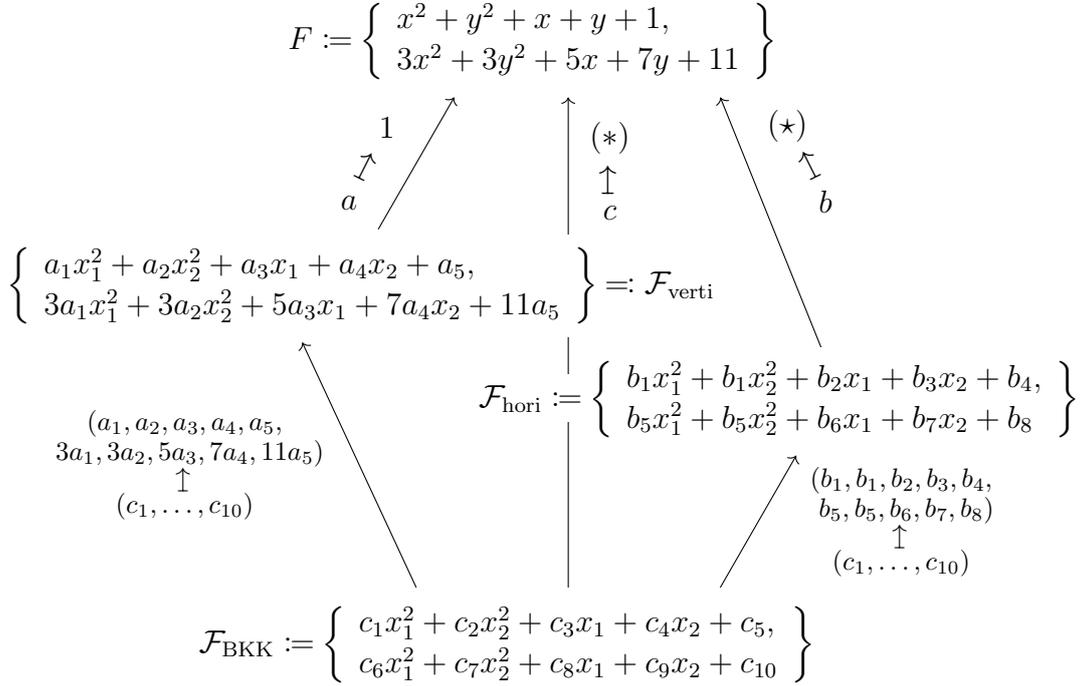
\begin{figure}[t]
  \centering
  \begin{tikzpicture}
    \draw[->] (0,-7.25) -- node[right,pos=0.85]
    {
      \begin{tikzpicture}[anchor=center]
        \node (top) at (0,0) {$(\ast)$};
        \node (bot) at (0,-1) {$c$};
        \draw[draw opacity=0] (bot) -- node[sloped]{$\mapsto$} (top);
      \end{tikzpicture}
    } (0,-0.75);
    \draw[->] (-2,-7.25) -- node[left,xshift=-2mm,font=\footnotesize]
    {
      \begin{tikzpicture}[anchor=center]
        \node at (0,0.4) {$(a_1,a_2,a_3,a_4,a_5,$};
        \node (top) at (0,0) {$\phantom{(}3a_1,3a_2,5a_3,7a_4,11a_5)$};
        \node (bot) at (0,-0.7) {$(c_1,\dots,c_{10})$};
        \draw[draw opacity=0] (bot) -- node[sloped]{$\mapsto$} (top);
      \end{tikzpicture}
    } ++(-1.5,3.25);
    \draw[->] (2,-7.25) -- node[right,xshift=4mm,font=\footnotesize]
    {
      \begin{tikzpicture}[anchor=center]
        \node at (0,0.4) {$(b_1,b_1,b_2,b_3,b_4,$};
        \node (top) at (0,0) {$\phantom{(}b_5,b_5,b_6,b_7,b_8)$};
        \node (bot) at (0,-0.7) {$(c_1,\dots,c_{10})$};
        \draw[draw opacity=0] (bot) -- node[sloped]{$\mapsto$} (top);
      \end{tikzpicture}
    } ++(1,1.75);
    \draw[->] (3.5,-4.5) -- node[right,pos=0.775]
    {
      \begin{tikzpicture}[anchor=center]
        \node (top) at (-0.5,0) {$(\star)$};
        \node (bot) at (0,-1) {$b$};
        \draw[draw opacity=0] (bot) -- node[sloped]{$\mapsfrom$} (top);
      \end{tikzpicture}
    } ++(-1.5,3.75);
    \draw[->] (-2.5,-2.5) -- node[left]
    {
      \begin{tikzpicture}[anchor=center]
        \node (top) at (0.5,0) {$1$};
        \node (bot) at (0,-1) {$a$};
        \draw[draw opacity=0] (bot) -- node[sloped]{$\mapsto$} (top);
      \end{tikzpicture}
    } ++(1,1.75);

    \node[fill=white] (polynomialSystem) at (0,0)
    {
      \begin{math}
        \left\{
          \begin{array}{l}
            x^2+y^2+x+y+1, \\
            3x^2+3y^2+5x+7y+11
          \end{array}
        \right\}
      \end{math}
    };
    \node[anchor=base east,xshift=2mm] at (polynomialSystem.base west) {$F\coloneqq$};
    \node[fill=white] (verticalSystem) at (-3.5,-3.25)
    {
      \begin{math}
        \left\{
          \begin{array}{l}
            a_1x_1^2+a_2x_2^2+a_3x_1+a_4x_2+a_5, \\
            3a_1x_1^2+3a_2x_2^2+5a_3x_1+7a_4x_2+11a_5
          \end{array}
        \right\}
      \end{math}
    };
    \node[anchor=base west,xshift=-2mm] at (verticalSystem.base east) {$\eqqcolon\dcF_{\mathrm{verti}}$};
    \node[fill=white] (horizontalSystem) at (3.5,-4.75)
    {
      \begin{math}
        \left\{
          \begin{array}{l}
            b_1x_1^2+b_1x_2^2+b_2x_1+b_3x_2+b_4, \\
            b_5x_1^2+b_5x_2^2+b_6x_1+b_7x_2+b_8
          \end{array}
        \right\}
      \end{math}
    };
    \node[anchor=base east,xshift=2mm,fill=white] at (horizontalSystem.base west) {$\dcF_{\mathrm{hori}}\coloneqq$};
    \node[fill=white] (bernsteinSystem) at (0,-8)
    {
      \begin{math}
        \left\{
          \begin{array}{l}
            c_1x_1^2+c_2x_2^2+c_3x_1+c_4x_2+c_5, \\
            c_6x_1^2+c_7x_2^2+c_8x_1+c_9x_2+c_{10}
          \end{array}
        \right\}
      \end{math}
    };
    \node[anchor=base east,xshift=2mm] at (bernsteinSystem.base west) {$\dcF_{\mathrm{BKK}}\coloneqq$};
  \end{tikzpicture}\vspace{-4mm}
  \caption{Three embeddings of the system $F$, where $(\ast)$ and $(\star)$ are suitable choices of parameters in $\CC^{10}$ and $\CC^{8}$, respectively.}\label{fig:embeddedSystems}
\end{figure}

\subsection{Tropical geometry}\label{sec:backgroundTropicalGeometry}
For tropical geometry, we will follow the notation of \cite{MaclaganSturmfels15} as closely as possible with one key difference: we tropicalize ideals instead of varieties as we are not only interested in solutions of polynomial systems, but also their multiplicity.  The resulting tropical varieties will be balanced polyhedral complexes instead of supports thereof.  Our definition of tropical varieties will therefore rely on some of the results in \cite[Sections 3.3 and 3.4]{MaclaganSturmfels15}.

\begin{definition}
  Let $\mathfrak K$ be the residue field of $K$ ($\mathfrak K=K$ if the valuation is trivial).
  The \emph{initial form} of a polynomial $f\in K[x^\pm]$, say $f=\sum_{\alpha\in S}c_\alpha  x^\alpha$ with support $S\subseteq\ZZ^n$ and coefficients $c_\alpha\in K^\ast$, with respect to a weight vector $w\in\RR^n$ is given by
  \[ \initial_w(f) \coloneqq \sum_{\substack{\alpha\in S\text{ with}\\\val(c_\alpha)+w\cdot \alpha\text{ minimal}}} \overline{t^{-\val(c_\alpha)} c_\alpha}   \, x^\alpha\in\mathfrak K[x^\pm]. \]
  The \emph{initial ideal} of an ideal $I\subseteq K[x^\pm]$ with respect to $w\in\RR^n$ is given by
  \[ \initial_w(I)\coloneqq\langle\,  \initial_w(f)\mid f\in I\, \rangle\subseteq \mathfrak K[x^\pm]. \]
\end{definition}

Defining tropical varieties as a subcomplex of the Gr\"obner complex of the homogenized ideal is quite technical.  We will omit the complete definition, refer the reader to \cite[Chapters 2 and 3]{MaclaganSturmfels15} for details, and rather explain them in some easy cases that are of importance to us.

\begin{definition}
  \label{def:tropicalVariety}
  Let $I\subseteq K[x^\pm]=K[x_1^\pm,\dots,x_n^\pm]$ be a Laurent polynomial ideal.  The \emph{tropical variety} or \emph{tropicalization} of $I$ is defined to be:
  \begin{align*}
    \Trop(I) &\coloneqq \Big\{ w\in\RR^n\mid \initial_w(I)\neq \mathfrak K[x^\pm] \Big\}.\\
    \intertext{By the Fundamental Theorem \cite[Theorem 3.2.3]{MaclaganSturmfels15}, if the valuation $\val$ is non-trivial, we also have}
    \Trop(I) &\coloneqq \mathrm{cl}\Big(\Big\{ \val(z)\in\RR^n\mid z\in V(I) \Big\}\Big)
  \end{align*}
  where $\val(\cdot)$ denotes coordinatewise valuation, and $\mathrm{cl}(\cdot)$ denotes Euclidean closure.

  By the Structure Theorem \cite[Theorem 3.3.5]{MaclaganSturmfels15}, we can use the Gr\"obner complex to make $\Trop(I)$ into a weighted polyhedral complex that, if $I$ is prime, is balanced and connected in codimension one.
\end{definition}

\begin{figure}[t]
  \centering
  \begin{tikzpicture}
    \node at (-3.5,0)
    {
      \begin{tikzpicture}[scale=1.1]
        \draw[red!70!black,very thick]
        (1,-1) -- ++(-2.5,2.5)
        (1,-1) -- node[left] {$2$} ++(0,-1.5)
        (1,-1) -- ++(2.5,2.5);
        \fill[red!70!black]
        (1,-1) circle (2.5pt);
        \node[anchor=north west,font=\scriptsize] at (1,-1) {$(-1,1)$};
        \node at (1,0.5)
        {
          \begin{tikzpicture}[scale=0.75]
            \draw (0,0) -- (2,0) -- (1,1) -- cycle;
            \fill (0,0) circle (2pt)
            (1,0) circle (2pt)
            (2,0) circle (2pt)
            (1,1) circle (2pt);
          \end{tikzpicture}
        };
        \node[red!70!black,very thick,font=\small,anchor=south west] at (-1.5,-2.5) {$\Trop(f_1)$};
        \draw[<-] (-1.25,-1) -- node[right,inner sep=2pt] {$e_2$} ++(0,0.75);
        \draw[<-] (-1,-1.25) -- node[above,inner sep=2pt] {$e_1$} ++(0.75,0);
      \end{tikzpicture}
    };
    \node at (3.5,0)
    {
      \begin{tikzpicture}[scale=1.1]
        \draw[blue!50!black,very thick]
        (-1,2) -- ++(1,1)
        (-1,2) -- ++(-1,0)
        (-1,2) -- (-1,1)
        (-1,1) -- ++(-1,0)
        (-1,1) -- (0,0)
        (0,0) -- ++(0,-1)
        (0,0) -- (1,0)
        (1,0) -- ++(0,-1)
        (1,0) -- ++(1.5,1.5);
        \fill[blue!50!black]
        (0,0) circle (2.5pt)
        (1,0) circle (2.5pt)
        (-1,1) circle (2.5pt)
        (-1,2) circle (2.5pt);
        \node[anchor=north west,font=\scriptsize,yshift=1mm] at (1,0) {$(-2,0)$};
        \node[anchor=north east,font=\scriptsize,yshift=1mm] at (0,0) {$(0,0)$};
        \node[anchor=north east,font=\scriptsize,xshift=1mm] at (-1,1) {$(2,-2)$};
        \node[anchor=south east,font=\scriptsize,xshift=1mm] at (-1,2) {$(2,-4)$};
        \node at (0.5,1.5)
        {
          \begin{tikzpicture}[scale=0.75]
            \draw (0,0) -- (2,0) -- (0,2) -- cycle
            (1,1) -- (0,1)
            (1,1) -- (0,0)
            (1,1) -- (1,0);
            \fill (0,0) circle (2pt)
            (1,0) circle (2pt)
            (2,0) circle (2pt)
            (0,1) circle (2pt)
            (1,1) circle (2pt)
            (0,2) circle (2pt);
          \end{tikzpicture}
        };
        \node[blue!50!black,very thick,font=\small,anchor=south west] at (-2,-1) {$\Trop(f_2)$};
      \end{tikzpicture}
    };
  \end{tikzpicture}\vspace{-3mm}
  \caption{Tropical hypersurfaces and Newton polytopes of the two polynomials from \cref{ex:tropicalHypersurfaces}.}
  \label{fig:tropicalHypersurfaces}
\end{figure}
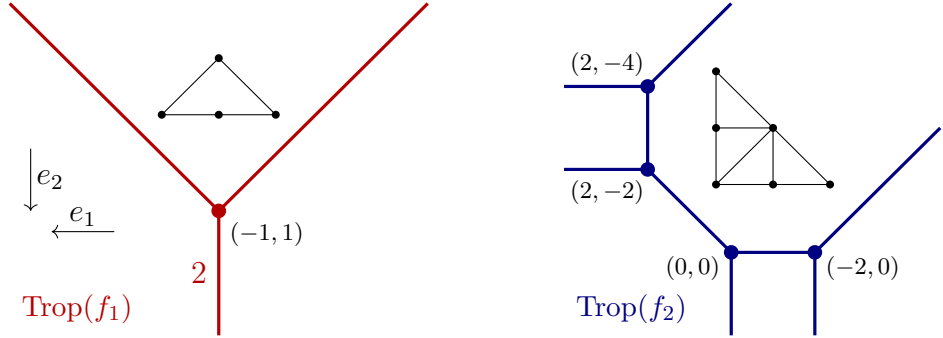

\begin{example}
  \label{ex:tropicalHypersurfaces}
  If $I=\langle f \rangle\in K[x^\pm]$ is principal, then $\Trop(f)\coloneqq\Trop(I)$ is referred to as a \emph{tropical hypersurface}, and it is dual to the regular subdivision of the Newton polytope induced by the valuation of its coefficients. The multiplicities are dual to the lattice length of the edges of the Newton polytope.

  Consider the two polynomials, whose tropical hypersurfaces and corresponding subdivisions of the Newton polytopes are illustrated in \cref{fig:tropicalHypersurfaces}:
  \begin{equation*}
    f_1=t^2x_1^2+x_1x_2+1 \;\text{and}\; f_2=t^2x_1^2+x_1x_2+t^6x_2^2+x_1+t^2x_2+1 \in \CCt[x_1^\pm,x_2^\pm].
  \end{equation*}
  The idea behind the duality is that for any $(z_1,z_2)\in(\CCt^\ast)^2$ with $f_i(z_1,z_2)=0$ the lowest $t$-degree terms in $f_i(z_1,z_2)$ must cancel.  Hence the monomials of the initial form $\initial_w(f_i)$, $w\coloneqq \val((z_1,z_2))\in\RR^2$ must form a positive-dimensional cell in the Newton subdivision.  For instance:

  $\Trop(f_1)$ has a ray $(-1,1)+\RR_{\geq 0}\cdot (0,1)$, that contains weight vectors $w\in\RR^n$ such that $\initial_w(f_1) = x_1^2+1$ and the coordinatewise valuations of solutions of the form $(t^{-1}\cdot (c+z_1'),t\cdot z_2')\in V(f_1)$, where $c\in\CC^\ast$ with $c^2=1$, and $z_i\in\CCt$ with $\val(z_i')>0$.  When substituting the solution into $f_1$, we see that the monomials of the initial form contribute to the terms of lowest $t$-degree:
  \begin{equation*}
    f_1\big(t^{-1}\cdot (c+z_1'),t\cdot z_2'\big) = \underbrace{t^2\cdot \big(t^{-1}\cdot (c+z_1')\big)^2}_{\val = 0} + \underbrace{\big(t^{-1}\cdot (c+z_1')\big)\cdot \big(t\cdot z_2'\big)}_{\val>0}+\underbrace{1\vphantom{{}_1}}_{\val=0}.
  \end{equation*}
\end{example}

\medskip

Next, we introduce stable intersections of balanced polyhedral complexes using both the definition in \cite[Definition 3.6.5]{MaclaganSturmfels15} and the equivalent formulation in \cite[Proposition 3.6.12]{MaclaganSturmfels15}.

\begin{definition}\label{def:stableIntersection}
  Let $\Sigma_1,\Sigma_2$ be two weighted balanced polyhedral complexes in $\RR^n$. Their \emph{stable intersection} is defined to be the polyhedral complex
  \[ \Sigma_1\wedge\Sigma_2\coloneqq\{\sigma_1\cap\sigma_2\mid \sigma_1\in\Sigma_1,\sigma_2\in\Sigma_2, \dim(\sigma_1+\sigma_2)=n \} \]
  with the multiplicities for the top-dimensional polyhedra given by
  \begin{equation*}
    \label{eq:stableIntersectionMultiplicities}
    \mult_{\Sigma_1\wedge\Sigma_2}(\sigma_1\cap\sigma_2) \coloneqq \sum_{\tau_1,\tau_2} \mult_{\Sigma_1}(\tau_1)\mult_{\Sigma_2}(\tau_2) [N : N_{\tau_1}+N_{\tau_2}].
  \end{equation*}
  Here, the sum is taken over all maximal $\tau_1\in\Sigma_1$ and $\tau_2\in\Sigma_2$ containing $\sigma_1\cap\sigma_2$ with $\tau_1\cap(\tau_2+\varepsilon\cdot v)\neq\emptyset$ for some generic $v\in\RR^n$ and $\varepsilon>0$ sufficiently small.  Moreover, $N$ denotes the standard lattice $\ZZ^n$, and $N_{\tau_i}$ denotes the sublattice generated by the linear span of the $\tau_i$ translated to the origin.
  Alternatively, it can be defined as:
  \[ \Sigma_1\wedge \Sigma_2 \coloneqq \lim_{\varepsilon\rightarrow 0} \Sigma_1\cap(\Sigma_2+\varepsilon\cdot v). \]
\end{definition}

\begin{example}
  \label{ex:stableIntersection}
  \cref{fig:stableIntersection} illustrates the stable intersection of $\Trop(f_1)$ and $\Trop(f_2)$ from \cref{ex:tropicalHypersurfaces}. It consists of four points, each of multiplicity $1$.
\end{example}

\begin{figure}[t]
  \centering
  \begin{tikzpicture}
    \node at (-4,0)
    {
      \begin{tikzpicture}[scale=1.1]
        \draw[blue!50!black,very thick]
        (-1,2) -- ++(1,1)
        (-1,2) -- ++(-2,0)
        (-1,2) -- (-1,1)
        (-1,1) -- ++(-2,0)
        (-1,1) -- (0,0)
        (0,0) -- ++(0,-1)
        (0,0) -- (1,0)
        (1,0) -- ++(0,-1)
        (1,0) -- ++(1.5,1.5);

        \draw[red!70!black,very thick]
        (0.85,-0.5) -- ++(-3.5,3.5)
        (0.85,-0.5) -- ++(0,-0.5)
        (0.85,-0.5) -- ++(1.65,1.65);

        \draw[fill=white]
        (-1.65,2) circle (2pt)
        (-1,1.35) circle (2pt)
        (0.35,0) circle (2pt)
        (1,-0.35) circle (2pt);

        \draw[->,densely dotted,very thick,violet]
        (0.5,-0.65) -- node[below,font=\footnotesize,xshift=-0.5mm] {$\varepsilon v$} (0.85,-0.65);
      \end{tikzpicture}
    };
    \draw[->,very thick,violet] (-0.8,-1) -- node[below] {$\varepsilon\rightarrow 0$} (0.8,-1);
    \node at (4,0)
    {
      \begin{tikzpicture}[scale=1.1]
        \draw[blue!50!black,very thick]
        (-1,2) -- ++(1,1)
        (-1,2) -- ++(-2,0)
        (-1,2) -- (-1,1)
        (-1,1) -- ++(-2,0)
        (0,0) -- ++(0,-1)
        (0,0) -- (1,0)
        (1,0) -- ++(0,-1);

        \draw[red!70!black,very thick]
        (-1,1) -- ++(-2,2)
        (0,0) -- (0.5,-0.5)
        (0.5,-0.5) -- ++(0,-0.5)
        (0.5,-0.5) -- (1,0);

        \draw[blue!50!black,very thick,dash pattern= on 6pt off 8pt,dash phase=7pt]
        (-1,1) -- (0,0)
        (1,0) -- ++(1.5,1.5);
        \draw[red!70!black,very thick,dash pattern= on 6pt off 8pt]
        (-1,1) -- (0,0)
        (1,0) -- ++(1.5,1.5);

        \draw[fill=white]
        (-2,2) circle (2pt)
        (-1,1) circle (2pt)
        (0,0) circle (2pt)
        (1,0) circle (2pt);
        \draw[decorate,decoration={brace,amplitude=5pt,mirror}] ($(-1,1)+(-0.1,-0.1)$) -- node[font=\footnotesize,anchor=north east] {intersection} ($(0,0)+(-0.1,-0.1)$);
        \node[font=\footnotesize,text width=20mm,align=center] at (0.5,1.5) {stable\\ intersection};
        \draw[<-,shorten <=5pt] (0,0) -- ++(0.25,1);
        \draw[<-,shorten <=5pt] (1,0) -- ++(-0.25,1);
        \draw[<-,shorten <=5pt] (-1,1) -- ++(0.6,0.2);
        \draw[<-,shorten <=8pt] (-2,2) to[out=337.5,in=180] ++(1.8,-0.35);
      \end{tikzpicture}
    };
  \end{tikzpicture}\vspace{-3mm}
  \caption{The stable intersection of the two tropical plane curves of \cref{ex:tropicalHypersurfaces}.}
  \label{fig:stableIntersection}
\end{figure}
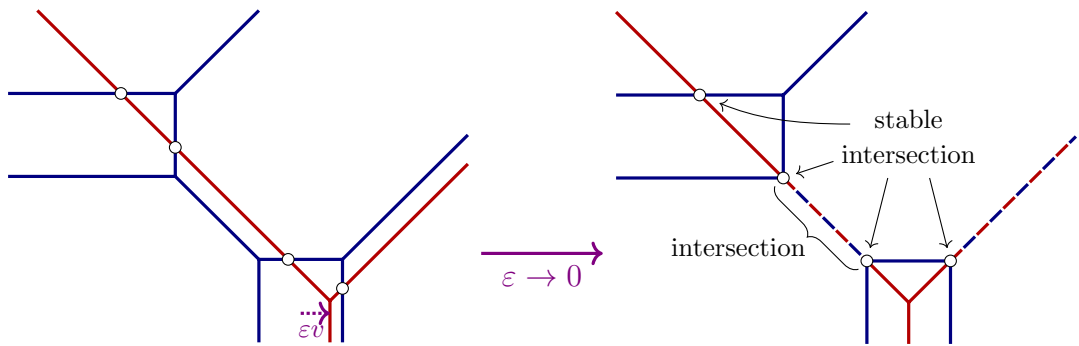

The following is a generalization of the Transverse Intersection Theorem \cite[Lemma~15]{BogartJensenSpeyerSturmfelsThomas07} from tropical varieties as \emph{supports of polyhedral complexes} to tropical varieties as \emph{balanced polyhedral complexes}.

\begin{theorem}
  \label{thm:transverseIntersection}
  Let $I,J\subseteq K[x^\pm]$ be complete intersections.  Suppose that $\Trop(I)$ and $\Trop(J)$ intersect transversally.  Then
  \[ \Trop(I + J)=\Trop(I)\wedge \Trop(J). \]
  Moreover, for all $w\in \Trop(I + J)$ we have
  \[ \initial_w(I + J) = \initial_w(I) + \initial_w(J). \]
\end{theorem}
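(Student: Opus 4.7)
The plan is to first reduce to the support-level result and then upgrade to the stated refinements on initial ideals and multiplicities. By the classical Transverse Intersection Theorem of Bogart--Jensen--Speyer--Sturmfels--Thomas, transversality gives the set-theoretic equality $|\Trop(I+J)|=|\Trop(I)\wedge\Trop(J)|$ inside $\RR^n$. What remains is to establish the identity $\initial_w(I+J)=\initial_w(I)+\initial_w(J)$ for every $w$ in this common set, from which the matching of multiplicities (and hence the equality of weighted polyhedral complexes) can be deduced at the end.

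The inclusion $\initial_w(I)+\initial_w(J)\subseteq\initial_w(I+J)$ is immediate from the definition, since $\initial_w(f)\in\initial_w(I+J)$ for every $f\in I$ and analogously for every $f\in J$. For the reverse inclusion, I would fix tropical Gröbner bases $G_I$ of $I$ and $G_J$ of $J$ with respect to $w$, with ties broken by a generic refining term order, and argue that $G_I\cup G_J$ is a tropical Gröbner basis of $I+J$ with respect to $w$. This is where transversality plays its role: the assumption that the affine spans of the maximal cells $\tau_1\subseteq\Trop(I)$ and $\tau_2\subseteq\Trop(J)$ through $w$ sum to all of $\RR^n$ allows one to perform a monomial change of coordinates under which the initial forms of $G_I$ and those of $G_J$ involve disjoint sets of variables. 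Cross $S$-polynomials between the two groups then reduce to zero trivially, so Buchberger's criterion applies, and taking initial forms yields $\initial_w(I+J)=\langle\initial_w(G_I)\cup\initial_w(G_J)\rangle=\initial_w(I)+\initial_w(J)$.

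With the initial ideal identity in hand, the multiplicity of a maximal cell $\sigma=\tau_1\cap\tau_2$ of $\Trop(I+J)$ at a generic $w\in\relint(\sigma)$ factors accordingly. The multiplicities $\mult_{\Trop(I)}(\tau_1)$ and $\mult_{\Trop(J)}(\tau_2)$ are computed from the zero-dimensional parts of $\initial_w(I)$ and $\initial_w(J)$ after quotienting by the respective lineality subspaces of $\tau_1$ and $\tau_2$, and under the general-position hypothesis supplied by transversality these quotients combine multiplicatively in $\mathfrak K[x^\pm]/\initial_w(I+J)$. The lattice index $[N:N_{\tau_1}+N_{\tau_2}]$ enters as the degree of the finite isogeny of tori induced by the inclusion $N_{\tau_1}+N_{\tau_2}\hookrightarrow N=\ZZ^n$, recovering exactly the stable intersection multiplicity formula of \cref{def:stableIntersection}.

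I expect the main technical obstacle to be the reverse inclusion of initial ideals: while the geometric meaning of transversality is intuitively clear, translating it into a Gröbner basis statement requires care to ensure that the refining term order, the monomial coordinate change aligned with the two transverse cells, and the valuation on $K$ all interact consistently. The subsequent multiplicity computation is then essentially bookkeeping, but one must track the index $[N:N_{\tau_1}+N_{\tau_2}]$ carefully so that it appears precisely once, rather than being absorbed into the individual multiplicities of $\tau_1$ and $\tau_2$.
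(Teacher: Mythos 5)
Your treatment of the set-theoretic equality and the initial-ideal identity is roughly in line with the strategy behind \cite[Theorem 3.4.12]{MaclaganSturmfels15}, which the paper simply cites (including Equation 3.4.3 for the statement on initial ideals). However, your Gr\"obner-basis sketch contains a subtlety you gloss over: the claim that transversality lets you find an integer monomial change of coordinates putting the initial forms of $G_I$ and $G_J$ in \emph{disjoint} sets of variables is only true over $\QQ$ or $\RR$; over $\ZZ$ it requires $N_{\tau_1}+N_{\tau_2}=N$, and precisely when the lattice index $[N:N_{\tau_1}+N_{\tau_2}]$ exceeds $1$ you must pass to a finite cover of the torus and descend. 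That is not fatal, but it is a real step, not a remark.

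The genuine gap is in the multiplicity part, and it is not ``essentially bookkeeping.'' You assert that, given $\initial_w(I+J)=\initial_w(I)+\initial_w(J)$ and transversality, the multiplicities ``combine multiplicatively'' in $\mathfrak K[x^\pm]/\initial_w(I+J)$. Transversality alone does not justify this: it guarantees that the intersection has the expected dimension, but the length of the local ring of $\initial_w(I)+\initial_w(J)$ need not equal the product of the two constituent lengths (times the lattice index) for arbitrary ideals --- higher $\mathrm{Tor}$'s can contribute excess intersection multiplicity. This is precisely the point at which the hypothesis that $I$ and $J$ are \emph{complete intersections} enters. Complete intersections are Cohen--Macaulay, and Cohen--Macaulayness is the hypothesis under which \cite[Corollary 5.1.3]{OssermanPayne13} gives the multiplicity-level statement $\Trop(I+J)=\Trop(I)\wedge\Trop(J)$ as weighted complexes. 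Your proposal never uses the complete intersection assumption, which is a strong signal that something is missing: you cannot derive the theorem without it. Either invoke the Osserman--Payne result as the paper does, or supply a depth/flatness argument (e.g.\ via the Tor vanishing available for Cohen--Macaulay modules) showing why the tensor product of the two initial degenerations has length equal to the product of lengths; without one of these, the multiplicity claim does not follow.
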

\begin{proof}
  Set-theoretically the first statement is \cite[Theorem 3.4.12]{MaclaganSturmfels15}.  The fact that the multiplicities match follows from \cite[Corollary 5.1.3]{OssermanPayne13}, which requires $I$ and $J$ to be Cohen--Macaulay.  The latter is implied from $I$ and $J$ being complete intersections.
  The second statement on the initial ideals is proven in the proof of \cite[Theorem 3.4.12]{MaclaganSturmfels15}, see in particular \cite[Equation 3.4.3]{MaclaganSturmfels15}.
\end{proof}

We end the section with a small lemma that we need in the next section.

\begin{lemma}
  \label{lem:avoidance}
  Let $P\in(\CC^\ast)^m$, and let $X\subsetneq\CC\{\!\{t\}\!\}^m$ be proper Zariski-closed subset. Then $t^v\cdot P\notin X$ for generic $v\in\QQ^m$.
\end{lemma}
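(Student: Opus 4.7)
The plan is to reduce the statement to the case where $X$ is a hypersurface, and then argue by tropicalizing a single defining polynomial. Since $X$ is a proper Zariski-closed subset of $\CCt^m$, I would pick any nonzero polynomial $f \in \CCt[a_1, \ldots, a_m]$ that vanishes on $X$. It then suffices to show that $f(t^v \cdot P) \neq 0$ for generic $v \in \QQ^m$, since this implies $t^v \cdot P \notin V(f) \supseteq X$.

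Next, write $f = \sum_{\alpha \in S} c_\alpha a^\alpha$ with finite support $S \subseteq \ZZ_{\geq 0}^m$ and all $c_\alpha \in \CCt^\ast$. Substituting $a_i = t^{v_i} P_i$ gives
\[
f(t^v \cdot P) \;=\; \sum_{\alpha \in S} c_\alpha P^\alpha \, t^{v \cdot \alpha}.
\]
Since $P \in (\CC^\ast)^m$ has all coordinates of valuation $0$, each $c_\alpha P^\alpha$ is a nonzero element of $\CCt$ of valuation $\val(c_\alpha)$, so the $\alpha$-th term above has $t$-adic valuation $\val(c_\alpha) + v \cdot \alpha$.

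The key observation is that for each pair of distinct $\alpha,\alpha' \in S$, the equation $\val(c_\alpha) + v \cdot \alpha = \val(c_{\alpha'}) + v \cdot \alpha'$ cuts out a proper affine hyperplane in $\RR^m$. For $v \in \QQ^m$ outside the finite union of these hyperplanes --- which is the precise meaning of ``generic'' here --- there is a unique $\alpha^\ast \in S$ minimizing $\val(c_\alpha) + v \cdot \alpha$, so the leading term $c_{\alpha^\ast} P^{\alpha^\ast} t^{v \cdot \alpha^\ast}$ of $f(t^v \cdot P)$ does not cancel against any other, and in particular $f(t^v \cdot P) \neq 0$. I do not anticipate a serious obstacle: the only thing one has to verify is that the coefficients $c_\alpha P^\alpha$ remain nonzero after specialization, which is automatic since $P$ has all coordinates in $\CC^\ast$. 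The whole argument is essentially the standard tropicalization trick applied to a single polynomial vanishing on $X$.
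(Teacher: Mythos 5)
Your proof is correct, but it is more elementary and self-contained than the one in the paper. The paper tropicalizes the entire subvariety $X \cap (\CCt^\ast)^m$ (which is proper and Zariski-closed since $X$ is) and appeals to the Structure Theorem: $\Trop(I(X))$ is a polyhedral complex of positive codimension, so the complement of its support is dense, and any $v$ in that complement cannot be the coordinatewise valuation of a point of $X$. This is a one-line argument, but it quietly leans on the full machinery of tropicalization of ideals (including the balancing/structure theorems) just to get a codimension count.

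You instead reduce to a single nonzero $f \in I(X)$ and show directly that $f(t^v\cdot P)\neq 0$ for $v$ outside a finite union of affine hyperplanes, by observing that each monomial contributes a term of valuation $\val(c_\alpha) + v\cdot\alpha$, and that for generic $v$ a unique minimum is attained so the leading term cannot cancel. This is precisely the ``tropicalization of a hypersurface'' argument done by hand, and it trades the Structure Theorem for a short explicit computation; it is arguably cleaner to cite in a setting where one wants to avoid heavy references. Two minor points worth being aware of: the bad set you cut out (the union of all pairwise tie hyperplanes) is generally a strict superset of $\Trop(f)$, which is harmless but slightly less sharp; and since $\val$ on $\CCt$ is $\QQ$-valued, those hyperplanes are rational, so ``generic $v\in\QQ^m$'' in the sense of avoiding a finite union of rational hyperplanes is exactly the genericity the lemma needs. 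Both approaches are valid; yours buys elementariness at the cost of a few extra lines, while the paper's buys brevity at the cost of invoking the Structure Theorem.
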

\begin{proof}
  As $X\subseteq \CCt^{m}$ is proper and Zariski closed, $X\cap(\CCt^\ast)^{m}\subseteq (\CCt^\ast)^{m}$ is also proper and Zariski closed.  Hence $\Trop(I(X))$ has positive codimension in $\RR^m$, and the com\-plement of its support is dense. This immediately gives the desired statement.
\end{proof}


\section{Tropical homotopies}\label{sec:tropicalHomotopies}
In this section, we explain how to construct generically optimal homotopies for a parametrized polynomial system using data about its tropicalization.  \cref{alg:tropicalHomotopies} naturally generalizes polyhedral homotopies, see \cite[Algorithm 3.1]{BBCHLS2023}, and is a variation of an idea found in the works of Leykin and Yu \cite{LeykinYu2019}.

\begin{algorithm}[Homotopies from tropical data]
  \label{alg:tropicalHomotopies}\
  \begin{algorithmic}[1]
    \REQUIRE{$(\dcF,P)$, where
      \begin{enumerate}
      \item $\dcF=\{\dcf_1,\dots,\dcf_n\}\subseteq \CC[a][x^\pm]$ is a square, parametrized polynomial system that is generically zero-dimensional and generically radical,
      \item $P\in (\CC^\ast)^{m}$.
      \end{enumerate}
      From hereon, we will regard $\dcF$ as a parametrized polynomial system over $\CC\{\!\{t\}\!\}$, and set $\dcI\coloneqq\langle\dcF\rangle\subseteq \CC\{\!\{t\}\!\}[a][x^\pm]$.
    }
    \ENSURE{A finite set $\{(H_i,V_i)\mid i=1,\dots,r\}$, where\samepage
      \begin{enumerate}
      \item $H_i\subseteq \CCt[x^\pm]$, a homotopy with $\dcI_P=\langle H_i|_{t=1}\rangle$,
      \item $V_i\subseteq V(H_i|_{t=0})\subseteq (\CC^\ast)^n$,  starting solutions,
      \end{enumerate}
      so that
      \begin{enumerate}
      \item every point in $V(\dcI_P)$ is connected to a point in $V_i$ via $H_i$ for some $i\in[r]$,
      \item $\sum_{i=1}^r |V_i| = \ell_{\dcI,\CCt(a)}$.\footnote{$|V_i|$ is counted with multiplicity as $V_i$ may not be smooth, see \cref{rem:startingSystems}.}
      \end{enumerate}
    }
    \vspace{0.2em}
    \STATE \label{algline:choiceOfValuation} Pick a generic choice of parameter valuations $v\in\QQ^{m}$ and set $Q\coloneqq t^v\cdot P\in\CC\{\!\{t\}\!\}^{m}$.\\
    \STATE \label{algline:tropicalData} Compute the tropical data required for homotopy construction:
    \begin{enumerate}
        \item $\Trop(\dcI_{Q})\subseteq\RR^n$,
        \item $V^{(w)}\subseteq(\CC^\ast)^n$, a sufficiently precise approximation of $V(\initial_w(\dcI_Q))\subseteq(\CC^\ast)^n$ for each $w\in\Trop(\dcI_Q)$.
    \end{enumerate}
    \STATE \label{algline:homotopyConstruction} Construct the homotopies
    \begin{equation*}\vspace{-1em}
      H^{(w)}{}\coloneqq \Big\{ t^{-\trop(f)(w)}\cdot f(t^{w}\cdot x)\mid f\in \dcF_Q\Big\} \subseteq\CC\{\!\{t\}\!\}[x^\pm] \text{ for all } w\in\Trop(\dcI_Q).
    \end{equation*}
    \RETURN{$\{(H^{(w)},V^{(w)}) \mid w \in\Trop(\dcI_Q)\}$}
  \end{algorithmic}
\end{algorithm}
\begin{proof}[Proof of correctness]
  Without loss of generality, we may assume that $v\in\ZZ^{m}$, so that $\dcF_Q\subseteq \CC[t^\pm][x^\pm]$.  By \cref{lem:avoidance}, $\langle\dcF_Q\rangle$ is zero-dimensional and radical.  Hence, $\dcF_Q$ may be used for homotopy continuation with target system $\dcF_Q|_{t=1}=\dcF_P$ and starting system $\dcF_Q|_{t=\varepsilon}$ for $\varepsilon>0$ sufficiently small by \cite[Theorem 7.1.1]{SommeseWampler05}.  The solutions of $\dcF_Q$ may diverge however at $t=0$.
  By the Newton--Puiseux theorem, each homotopy path is parametrized by a Puiseux series around $t=0$, and the changes of coordinates in Line~\ref{algline:homotopyConstruction} ensures that the homotopy paths whose Puiseux series have coordinatewise valuation $w$ do not diverge at $t=0$, without affecting the solutions at $t=1$.
\end{proof}

\begin{example}[Polyhedral homotopies]
  \label{ex:polyhedralHomotopies}
  Consider $F=\{f_1,f_2\}$ from \cite[Example 11]{BBCHLS2023}:
  \[ f_1\coloneqq 5-3x_1^2-3x_2^2+x_1^2x_2^2, \quad f_2\coloneqq 1+2x_1x_2-5x_1x_2^2-3x_1^2x_2\in\CC[x_1^\pm,x_2^\pm]. \]
  For the input of \cref{alg:tropicalHomotopies}, consider the parametrized system $\dcF=\{\dcf_1,\dcf_2\}$ with
  \begin{equation}
    \label{eq:bernsteinSystem}
    \begin{aligned}
      \dcf_1&\coloneqq a_{0,0}+a_{2,0}x_1^2+a_{0,2}x_2^2+a_{2,2}x_1^2x_2^2,\\
      \dcf_2&\coloneqq b_{0,0}+b_{1,1}x_1x_2+b_{1,2}x_1x_2^2+b_{2,1}x_1^2x_2\in\CC[a,b][x^\pm]
    \end{aligned}
  \end{equation}
  and parameters
  \begin{equation*}
    P\coloneqq (\underset{a_{0,0}}{\vphantom{-}5},\underset{a_{2,0}}{-3},\underset{a_{0,2}}{-3},\underset{a_{2,2}}{\vphantom{-}1},\underset{b_{0,0}}{\vphantom{-}1},\underset{b_{1,1}}{\vphantom{-}2},\underset{b_{1,2}}{-5},\underset{b_{2,1}}{-3}) \in\CC^{8},
  \end{equation*}
  so that $\dcF_P=F$.

  In Step~\ref{algline:choiceOfValuation}, consider the choice of parameter valuations
  \begin{equation*}
    v\coloneqq (\underset{a_{0,0}}{\vphantom{-}0},\underset{a_{2,0}}{\vphantom{-}0},\underset{a_{0,2}}{\vphantom{-}0},\underset{a_{2,2}}{\vphantom{-}0},\underset{b_{0,0}}{\vphantom{-}0},\underset{b_{1,1}}{\vphantom{-}2},\underset{b_{1,2}}{\vphantom{-}3},\underset{b_{2,1}}{\vphantom{-}3}) \in\QQ^{8},
  \end{equation*}
  so that $\dcf_{1,Q},\dcf_{2,Q}$ coincide with $h_1, h_2$ in \cite[Example 11]{BBCHLS2023}:
  \begin{equation*}
    \begin{aligned}
      \dcf_{1,Q} &= 5-3x_1^2-3x_2^2+x_1^2x_2^2, \\
      \dcf_{2,Q} &= 1+2t^2x_1x_2-5t^3x_1x_2^2-3t^3x_1^2x_2\in\CC\{\!\{t\}\!\}[x_1,x_2],
    \end{aligned}
  \end{equation*}

  For the tropical data in Step~\ref{algline:tropicalData}, note that by \cref{thm:transverseIntersection} and without the need for any Gr\"obner basis computations, we obtain the tropicalization $\Trop(\dcI_{Q})=\{(0,-\frac32),(-\frac32,0)\}$, see \cref{fig:polyhedralHomotopies} (a), as well as the initial ideals
  \begin{align*}
    \initial_w(\langle \dcf_{1,Q}, \dcf_{2,Q}\rangle)
    &=\initial_w(\langle \dcf_{1,Q}\rangle)+\initial_w(\langle \dcf_{2,Q}\rangle) = \langle \initial_w(\dcf_{1,Q}),\initial_w(\dcf_{2,Q})\rangle\\
    &=
      \begin{cases}
        \langle 3x_2^2+x_1^2x_2^2, 1+5x_1x_2^2 \rangle &\text{for } w=(0,-\frac32)\\
        \langle 3x_1^2+x_1^2x_2^2, 1-3x_1^2x_2\rangle &\text{for } w=(-\frac32,0)
      \end{cases}
  \end{align*}
  which yields the following $4+4$ starting solutions in $(\CC^\ast)^2$
  \begin{align*}
    V^{(0,-\frac32)} &= \Big\{ \Big(z_1, \pm \sqrt{-\tfrac{1}{5z_1}}\Big)\bigmid z_1=\pm\sqrt{-3} \Big\} \quad\text{and}\\
    V^{(-\frac32,0)} &= \Big\{ \Big(\pm \sqrt{\tfrac{1}{3z_2}},z_2\Big)\bigmid z_2=\pm\sqrt{-3} \Big\}.
  \end{align*}

  As for homotopies, for $w=(0,-\frac32)$ in Step~\ref{algline:homotopyConstruction}, we get
  \begin{equation*}
    \begin{aligned}
      \trop(\dcf_{1,Q})\Big(0,-\textstyle\frac{3}{2}\Big)&=\min\Big(0,2\cdot 0,2\cdot (-\textstyle\frac{3}{2}),2\cdot 0+2\cdot (-\textstyle\frac{3}{2})\Big)(w) = -3\\
      \trop(\dcf_{2,Q})\Big(0,-\textstyle\frac{3}{2}\Big)&=\min\Big(0,2+0+(-\textstyle\frac{3}{2}),5+0+2\cdot (-\textstyle\frac{3}{2}), 3+2\cdot 0+(-\textstyle\frac{3}{2})\Big)=0
    \end{aligned}
  \end{equation*}
  and hence
  \begin{equation*}
    \begin{aligned}
      t^{3} \dcf_{1,Q}(x_1,t^{-3/2}x_2) &= 5t^3-3t^3x_1^2-3x_2^2+x_1^2x_2^2, \\
      t^{0} \dcf_{2,Q}(x_1,t^{-3/2}x_2) &= 1+2t^{1/2}x_1x_2-5x_1x_2^2-3t^{3/2}x_1^2x_2\in\CC\{\!\{t\}\!\}[x_1,x_2].
    \end{aligned}
  \end{equation*}

  This homotopy is the same as \cite[Example 11]{BBCHLS2023} for $\alpha=(0,-3)$ after substituting $t$ by $s^2$, making all exponents integer.
  The same holds for $w=(-\frac32,0)$, which will reconstruct the homotopies of \cite[Example 11]{BBCHLS2023} for $\gamma=(-3,0)$.

  More generally, any polyhedral homotopy can be obtained from \cref{alg:tropicalHomotopies} with a parametrized input system where every coefficient has its own parameter as in \cref{eq:bernsteinSystem}.  Obtaining the $\Trop(\dcI_Q)$ for tropical data in Step~\ref{algline:tropicalData} is usually done by mixed cells, see \cref{fig:polyhedralHomotopies} (b), and obtaining the starting solutions $V^{(w)}$ is easy as the initial ideals $\initial_w(\dcI_Q)$ will be binomial.
\end{example}

\begin{figure}
  \centering
  \begin{tikzpicture}
    \node (left)
    {
      \begin{tikzpicture}
        \draw[very thick,darkblue]
        (-1,0) -- (4,0)
        (0,-1) -- (0,4);
        \fill[darkblue] (0,0) circle (2.5pt);
        \node[darkblue,anchor=south west,font=\footnotesize] at (0,0) {$(0,0)$};
        \node[below,darkblue] at (-0.65,0) {$2$};
        \node[left,darkblue] at (0,-0.65) {$2$};
        \node[below,darkblue] at (1.5,0) {$2$};
        \node[left,darkblue] at (0,1.5) {$2$};
        \node[above,darkblue] (Tropf1) at (0,4) {$\Trop(\dcf_{1,Q})$};

        \draw[thick,darkred]
        (2,2) -- ++(-3,1.5)
        (2,2) -- ++(1.5,-3)
        (2,2) -- ++(2,2);
        \fill[darkred] (2,2) circle (2.5pt);
        \node[darkred,anchor=west,yshift=-1mm,font=\footnotesize] at (2,2) {$(-1,-1)$};
        \node[darkred,above] (Tropf2) at (4,4) {$\Trop(\dcf_{2,Q})$};

        \draw[fill=white]
        (3,0) circle (3pt)
        (0,3) circle (3pt);
        \node[anchor=south west,font=\footnotesize] at (3,0) {$(-\frac{3}{2},0)$};
        \node[anchor=south west,font=\footnotesize] at (0,3) {$(0,-\frac{3}{2})$};

        \draw[<-] (1,-1.5) -- node[above]{$e_1$} ++(1,0);
        \draw[<-] (-1.5,1) -- node[right]{$e_2$} ++(0,1);
      \end{tikzpicture}
    };
    \node[anchor=north] at (left.south) {(a)};
    \node[anchor=west] (right) at (left.east)
    {
      \begin{tikzpicture}[anchor=center]
          \draw[very thick,darkblue]
          (-1,0) -- (4,0)
          (0,-1) -- (0,4);
          \fill[darkblue] (0,0) circle (3pt);

          \draw[thick,darkred]
          (2,2) -- ++(-3,1.5)
          (2,2) -- ++(1.5,-3)
          (2,2) -- ++(2,2);
          \fill[darkred] (2,2) circle (3pt);

          \draw[fill=white]
          (3,0) circle (2pt)
          (0,3) circle (2pt);

          \node at (1,1.175)
          {
            \begin{tikzpicture}[anchor=center,scale=0.325]
              \draw[thick,darkblue,fill=blue!20] (0,0) rectangle (2,2);
              \draw[thick,darkred,fill=red!20] (2,2) -- (4,3) -- (3,4) -- cycle;
              \draw[thick,darkblue]
              (4,3) -- (4,1)
              (3,4) -- (1,4);
              \draw[thick,darkred]
              (4,1) -- (2,0)
              (1,4) -- (0,2);
              \fill
              (0,0) circle (2pt)
              (2,0) circle (2pt)
              (0,2) circle (2pt)
              (2,2) circle (2pt)
              (4,1) circle (2pt)
              (1,4) circle (2pt)
              (4,3) circle (2pt)
              (3,4) circle (2pt);
              \node[font=\tiny] at (1.5,3) {4};
              \node[font=\tiny] at (3,1.5) {4};
            \end{tikzpicture}
          };
          \draw[darkblue,fill=blue!20]
          (-0.3,-0.3) rectangle (0.3,0.3);
          \node[anchor=north east,darkblue] at (-0.3,-0.3) {$\Delta(\dcf_1)$};
          \draw[darkred,fill=red!20]
          (2,2)++(-0.25,-0.25) -- ++(0.6,0.3) -- ++(-0.3,0.3) -- cycle;
          \node[anchor=south,darkred] at (2,2.4) {$\Delta(\dcf_2)$};

          \draw[fill=white]
          (3,0)++(-0.3,-0.45) -- ++(0,0.6) -- ++(0.6,0.3) -- ++(0,-0.6) -- cycle;
          \node[font=\tiny] at (3,0) {$4$};
          \draw[fill=white]
          (0,3)++(-0.45,-0.3) -- ++(0.6,0) -- ++(0.3,0.6) -- ++(-0.6,0) -- cycle;
          \node[font=\tiny] at (0,3) {$4$};
        \end{tikzpicture}
    };
    \node[anchor=north,yshift=-5.5mm] at (right.south) {(b)};
  \end{tikzpicture}\vspace{-4mm}
  \caption{The tropical intersection from \cref{ex:polyhedralHomotopies} and its mixed cells. Here, $\Delta(\cdot)$ denotes the Newton polytope. }
  \label{fig:polyhedralHomotopies}
\end{figure}
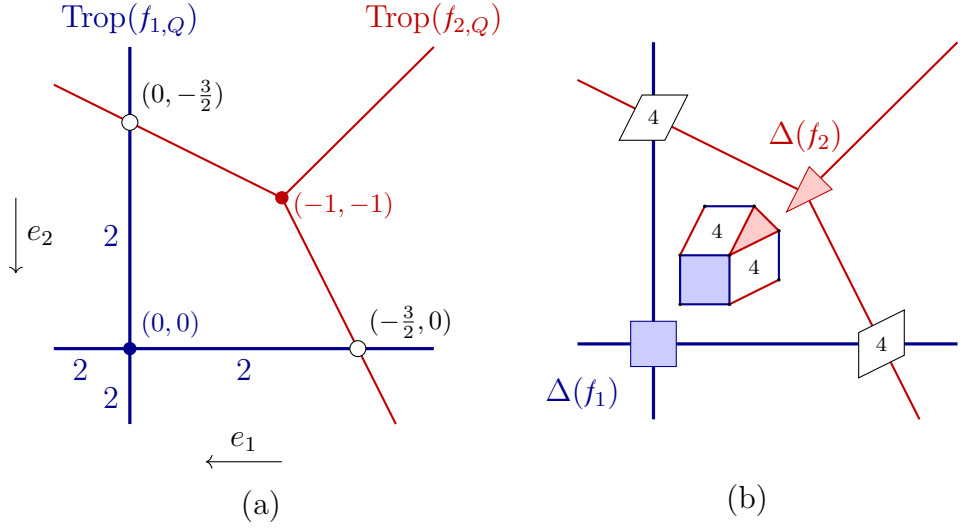

\medskip

To borrow a term from chess, we refer to the numerical challenges of initiating the tracing of the homotopies $(H^{(w)},V^{(w)})$ produced by \cref{alg:tropicalHomotopies} around $t=0$ as the \emph{early game} of path tracking.
Depending on the parametrized polynomial system $\dcF$ at hand, the following are two of the main early game issues one needs to address. (However, as we will see in later sections, none of these issues arise for the parametrized systems considered in \cref{sec:verticalFamilies,sec:horzizontalFamilyTransverse,sec:horziontalFamilyRelaxation}.)

\begin{remark}[Early game technicalities]\
  \label{rem:startingSystems}
  All potential issues with the starting system stem from the following two technicalities:
  \begin{enumerate}[label=(\roman*),leftmargin=*]
  \item \label{enumitem:startingSystemTechnicality1} the initial ideal $\initial_w(\langle \dcF_Q\rangle)\subseteq\CC[x^\pm]$ need not be radical,
  \item \label{enumitem:startingSystemTechnicality2} at $t=0$, the homotopy $H^{(w)}(0,x)\subseteq\CC[x^\pm]$ need not generate the initial ideal.
  \end{enumerate}
  Both \ref{enumitem:startingSystemTechnicality1} and \ref{enumitem:startingSystemTechnicality2} lead to the problem that, at $t=0$ and for $z\in V^{(w)}$, the Jacobian $J(H^{(w)}|_{t=0})(z)\in\CC^{n\times n}$ is not invertible, which means using the usual predictor-corrector methods described in \cite[Section 2.4]{BBCHLS2023} is not possible.

  This is a well-known issue that can be worked around by approximating the evaluation of the Puiseux series solution at small $t=\varepsilon>0$ by $z\cdot \varepsilon^w\coloneqq (z_1\cdot \varepsilon^{w_1},\dots,z_n\cdot \varepsilon^{w_n})$.  If the initial ideal $\initial_w(\langle \dcF_Q\rangle)$ is not radical, this may further require higher order terms, see \cite[Section~3.3]{sturmfels02} and \cite[Remark 6]{LeykinYu2019}.

  If $\initial_w(\langle \dcF_Q\rangle)$ is radical, we can alternatively construct the homotopies $H^{(w)}$ from the tropical Gr\"obner bases used to compute $V^{(w)}$; see \cref{ex:twoCirclesHorizontalFamily}.  If the Gr\"obner basis is not square, we can construct square homotopies by taking a generic linear combination of the Gr\"obner basis elements.
\end{remark}

The technicalities outlined in \cref{rem:startingSystems} motivate the following definition:

\begin{definition}  \label{def:genericallyBinomialInitials}
  Let $\dcI\subseteq \CC[a][x^\pm]$ be a parametrized polynomial ideal.\samepage
  \begin{itemize}
      \item  We say that $\dcI$ has \emph{binomial}, \emph{complete intersection}, or \emph{radical initials} at a choice of parameters $Q\in\CCt^{m}$, if for all $w\in\Trop(\dcI_Q)$ the initial ideal $\initial_w(\dcI_Q)$ is binomial, a complete intersection, or radical, respectively.

  \item We say $\dcI$ has one of the aforementioned properties \emph{sufficiently often}, if there is a Zariski dense set $U\subseteq \CCt^{m}$ such that the property holds for all $Q\in U$.

  \item We say $\dcI$ has one of the aforementioned properties \emph{generically}, if there is a Zariski open and dense set $U\subseteq \CCt^{m}$ such that the properties above hold for all $Q\in U$.
  \end{itemize}
\end{definition}

We conclude the section by showing why binomial initial ideals are highly desirable.  The result is used in \cref{sec:verticalFamilies}.

\begin{theorem}\label{thm:binomialInitialsAreRadical}
  Suppose that $\dcI\subseteq\CCt[a][x^\pm]$ is generically zero-dimensional.  If $\dcI$ has binomial initials at $Q\in\CCt^{m}$, then $\dcI$ has radical initials at $Q$.  In particular, if $\dcI$ has binomial initials sufficiently often, it is generically radical.
\end{theorem}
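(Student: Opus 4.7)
My approach splits along the two parts of the claim: first I will show binomial-at-$Q$ implies radical-at-$Q$, then I will propagate ``sufficiently often'' to ``generically''. For the first implication, I will identify each proper binomial ideal $\initial_w(\dcI_Q) \subseteq \CC[x^\pm]$ with a \emph{lattice ideal with character}. Concretely, one multiplies each binomial generator by a monomial unit to bring it into the form $x^{\gamma_i} - c_i$ with $\gamma_i \in \ZZ^n$ and $c_i \in \CC^*$, sets $L := \langle \gamma_i \rangle \subseteq \ZZ^n$, and defines $\chi(\gamma_i) := c_i$. Properness of $\initial_w(\dcI_Q)$ forces $\chi$ to respect all $\ZZ$-linear relations among the $\gamma_i$ (else $1 - \prod c_i^{n_i}$ would be a nonzero constant in the ideal), so $\chi$ extends to a well-defined character $L \to \CC^*$, and the standard verification gives $\initial_w(\dcI_Q) = I_{L,\chi}$. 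The classical Eisenbud--Sturmfels theorem then decomposes $I_{L,\chi}$ in characteristic zero as the intersection of the primes $I_{L^{\mathrm{sat}},\chi'}$ over extensions $\chi'$ of $\chi$ to the saturation $L^{\mathrm{sat}}$, which in particular gives radicality.

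For the second implication, let $U \subseteq \CCt^m$ be the Zariski dense set on which $\dcI$ has binomial initials, and consider the generic specialization $I := \dcI_{\CCt(a)} \subseteq \CCt(a)[x^\pm]$. I will appeal to comprehensive Groebner basis theory to produce a Zariski open dense $V \subseteq \CCt^m$ on which $\Trop(\dcI_Q) = \Trop(I)$ and each $\initial_w(\dcI_Q)$ is the specialization at $Q$ of the generic initial ideal $\initial_w(I) \subseteq \CC(a)[x^\pm]$. Picking any $Q_0 \in U \cap V$, the binomiality of $\initial_w(\dcI_{Q_0})$ forces $\initial_w(I)$ itself to be binomial, since a coefficient in $\CC(a) \setminus \{0\}$ cannot vanish on a Zariski dense set of parameters. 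This binomiality then propagates back to binomiality of $\initial_w(\dcI_Q)$ for every $Q$ in a Zariski open dense subset of $V$; combined with the first implication, this gives generically radical initials.

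The main obstacle I anticipate is the parametric-tropical statement underlying the second part, namely that $\Trop(\dcI_Q)$ and the initial ideals $\initial_w(\dcI_Q)$ agree with their generic counterparts over $\CCt(a)$ on a Zariski open dense subset of parameter space. This is standard in spirit but technically delicate, requiring a careful combination of comprehensive Groebner basis theory and the tropical Structure Theorem to handle the passage from a single generic parameter to a Zariski open dense locus.
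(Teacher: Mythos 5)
Your first implication is correct and is essentially the paper's argument: both deduce radicality of the binomial initial ideals from Eisenbud--Sturmfels, with you filling in the useful detail that a proper binomial ideal of the Laurent ring $\CC[x^\pm]$ is a lattice ideal with character, hence an intersection of primes in characteristic zero.

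The second implication has a genuine gap. Your argument ends with \emph{generically radical initials}, but the theorem asserts that $\dcI$ is \emph{generically radical}, meaning that the specialized ideal $\dcI_Q\subseteq\CCt[x^\pm]$ is itself radical for $Q$ in a Zariski open dense set; this is the hypothesis that \cref{alg:tropicalHomotopies} actually requires. Radicality of all initial ideals $\initial_w(\dcI_Q)$ does not formally give radicality of $\dcI_Q$: one still needs a degeneration step passing from reducedness of the initial schemes back to reducedness of $\dcI_Q$ itself. The paper supplies exactly this: binomial initials together with zero-dimensionality make $\dcI_Q$ sch\"on in the sense of \cite{HelmKatz2012}, and the argument preceding \cite[Proposition~3.9]{HelmKatz2012} then yields that $\dcI_Q$ is radical. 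With $\dcI_Q$ radical on a Zariski dense set of parameters, the promotion from ``sufficiently often'' to ``generically'' is immediate because radicality of a generically finite specialization is a Zariski-open condition on $Q$. This entirely avoids the comprehensive Gr\"obner basis and parametric tropicalization machinery you flag as the technical obstacle --- that machinery is not needed for the paper's route, and invoking it without the final degeneration step still does not reach the stated conclusion.
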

\begin{proof}
  The first part follows from \cite[Corollary 2.2]{EisenbudSturmfels1996}, which states that binomial ideals are radical over fields of characteristic $0$.

  Let $Q\in \CCt^{m}$ be a choice of parameters such that $\dcI_{Q}$ is zero-dimensional and has binomial initial ideals.  By \cite[Proposition~3.9]{HelmKatz2012}, this means that $\dcI_Q$ is sch\"on, and we can follow the argument before \cite[Proposition 3.9]{HelmKatz2012} to show that $\dcI_Q$ is radical.
  As the set of parameters $Q\in \CCt^{m}$ for which $\dcI_Q$ is radical is Zariski-open, this implies that $\dcI$ is generically radical.
\end{proof}

\begin{example}
  Many naturally occurring types of parametrized polynomial ideals $\dcI\subseteq \CC[a][x^{\pm}]$ have binomial initials sufficiently often.

  If $\mathcal{I}$ is linear in the variables $x$, then its initial ideals $\initial_w(\dcI_Q)$ are binomial provided $w\in\Trop(\dcI_Q)$ lies in the relative interior of a maximal polyhedron.  This is a consequence of \cite[Lemma 1]{BLMM17}, and the fact that all multiplicities on a tropical linear space are $1$.

  If $\dcI$ is generated by parametrized polynomials $\dcF=\{\dcf_1,\dots,\dcf_n\}$ where each coefficient is its own parameter, such as $\dcF_{\mathrm{BKK}}$ in \cref{ex:twoCircleEmbeddings}, then it is easy to find $Q\in \CCt^{m}$ such that $\dcI_Q$ has binomial initials: Simply find $Q\in \CCt^{m}$ such that
    \begin{enumerate}
    \item the valuations of the coefficients of the $\dcf_{i,Q}$ induce maximal subdivisions on the Newton polytopes,
    \item the tropicalizations $\Trop(\dcf_{i,Q})$ intersect transversally.
    \end{enumerate}
    Finding such parameters (or rather their valuations) is the first step in constructing polyhedral homotopies, see \cite[Algorithm 3.1 Line 4]{BBCHLS2023}.  The fact that the initials are binomial is a consequence of \cref{thm:transverseIntersection}.
\end{example}


\section{Vertically parametrized polynomial systems}\label{sec:verticalFamilies}
In this section, we discuss vertically parametrized polynomial systems, which are inspired from the steady state equations of chemical reaction networks and certain Lagrangian systems in polynomial optimization. See \cite{FeliuHenrikssonPascual23} for a discussion on the generic geometry of vertically parametrized systems, and \cite[Section~6.1]{HelminckRen22} for a discussion on their generic root counts.
The goal of this section is to show how \cref{alg:tropicalHomotopies} can be carried out efficiently for these systems, and how they satisfy all desirable properties for it.

\begin{definition}
  \label{def:verticalFamily}
  Given a multiset $S=\{\alpha_1,\ldots,\alpha_m\}\subseteq\ZZ^n$ of exponent vectors, a \emph{vertically parametrized} system with exponents $S$ is a parametrized system
  \begin{equation*}
    \dcF\coloneqq\{\dcf_1,\dots,\dcf_n\}\subseteq \CC[a][x^\pm]\coloneqq \CC[a_1,\dots,a_m][x_1^\pm, \dots, x_n^\pm],
  \end{equation*}
  of the form
  \begin{equation*}
    \dcf_i \coloneqq \sum_{j=1}^m c_{i,j}a_j x^{\alpha_j}
  \end{equation*}
  for some coefficients $c_{i,j}\in\CC$ (note that we allow $c_{i,j}=0$).
\end{definition}

For the remainder of \Cref{sec:verticalFamilies}, we fix a vertically parametrized system $\dcF$ with some exponent vectors $S$, and let $\dcI\coloneqq\langle \dcF\rangle\subseteq\CCt[a][x^\pm]$ denote the ideal generated by $\dcF$ over $\CCt$.

\subsection{Tropical data for homotopy construction}\label{sec:verticalFamiliesTropicalData}
In this subsection, we will show that vertically parametrized systems are especially suited for \cref{alg:tropicalHomotopies}.

\begin{definition}
  \label{def:verticalModification}
  The \emph{modification} of a vertically parametrized system $\dcF$ is given by
  \begin{equation*}
    \hat \dcF\coloneqq\{\hat \dcf_i, \hat \dcg_j\mid i\in [n], j\in[m] \} \subseteq \CC[a][x^\pm,y^\pm]\coloneqq \CC[a][x_i^\pm, y_j^\pm\mid i\in[n], j\in[m]].
  \end{equation*}
  where
  \begin{equation*}
    \hat \dcf_i \coloneqq \sum_{j=1}^m c_{i,j}a_j y_j \quad\text{and}\quad \hat \dcg_j \coloneqq y_j-x^{\alpha_j}.
  \end{equation*}
\end{definition}

In what follows, we let $\hat \dcI_\lin\coloneqq\langle \hat \dcf_i\mid i\in[n]\rangle$ and $\hat \dcI_\bin\coloneqq\langle \hat \dcg_j\mid j\in[m]\rangle$ denote the ideals generated by the $\hat \dcf_i$ and $\hat \dcg_j$ respectively in $\CCt[a][x^\pm,y^\pm]$, and set $\hat \dcI\coloneqq \hat \dcI_\lin+\hat \dcI_\bin$.

This modification gives a way to compute $\Trop(\dcI_Q)$ as a stable intersection, which was also explained through the notion of toric equivariance in \cite[Section~6.1]{HelminckRen22}.


\begin{lemma}
  \label{lem:verticalTropicalization}
  For a generic choice of parameters $Q\in \CCt^{m}$, we have an isomorphism of (zero-dimensional) weighted polyhedral complexes
  \begin{equation}
    \label{eq:verticalModificationTropicalMap}
    \begin{array}{ccc}
      \RR^n & & \RR^n\times\RR^{m}\\
      \rotatebox[origin=c]{90}{$\subseteq$} & & \rotatebox[origin=c]{90}{$\subseteq$} \\
      \Trop(\dcI_Q)   & \overset{\cong}{\longleftrightarrow} & \Trop(\hat \dcI_{\lin,Q}) \wedge \displaystyle\bigwedge_{j=1}^m\Trop(\hat \dcg_{j,Q}) \\[7mm]
      (w_i)_{i\in [n]} & \overset{\pi}{\longmapsfrom} & \big((w_i)_{i\in[n]},(w_j)_{j\in [m]}\big)
    \end{array}
  \end{equation}
\end{lemma}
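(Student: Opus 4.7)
The plan is to prove the isomorphism in two steps: first identify the stable intersection on the right-hand side with the tropicalization of the full modification $\Trop(\hat\dcI_Q)$, then push the latter forward along the projection $\pi$ to $\Trop(\dcI_Q)$.

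For the first step, I will invoke \cref{thm:transverseIntersection}. Each binomial $\hat\dcg_j = y_j - x^{\alpha_j}$ tropicalizes to the classical hyperplane $H_j \coloneqq \{(w,u)\in \RR^n \times \RR^m : u_j = \alpha_j \cdot w\}$, and these hyperplanes intersect transversally because they involve disjoint $y$-variables. Hence $\bigwedge_{j=1}^m \Trop(\hat\dcg_{j,Q})$ equals the graph $\Gamma \coloneqq \{(w, (\alpha_j \cdot w)_j) : w\in \RR^n\}$, an $n$-dimensional linear space, and $\hat\dcI_{\bin}$ is a complete intersection. The ideal $\hat\dcI_{\lin,Q}$ is generated by $n$ linear forms purely in the $y$-variables, so for generic choices of $Q$ it is a complete intersection whose tropicalization has codimension $n$ in $\RR^n \times \RR^m$ and is constant along the $\RR^n$-factor. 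A dimension count shows that the expected intersection with $\Gamma$ is zero-dimensional, consistent with the generic zero-dimensionality hypothesis on $\dcI$. Transversality of $\Trop(\hat\dcI_{\lin,Q})$ with $\Gamma$ then follows from genericity of $Q$, and \cref{thm:transverseIntersection} yields $\Trop(\hat\dcI_Q) = \Trop(\hat\dcI_{\lin,Q}) \wedge \bigwedge_{j=1}^m \Trop(\hat\dcg_{j,Q})$ as weighted polyhedral complexes.

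For the second step, I observe that substituting $y_j \mapsto x^{\alpha_j}$ sends $\hat\dcf_{i,Q}$ to $\dcf_{i,Q}$, so the monomial map $\phi\colon x \mapsto (x, (x^{\alpha_j})_j)$ induces an isomorphism of schemes $V(\dcI_Q) \cong V(\hat\dcI_Q)$. Tropically, $\phi$ corresponds to the linear map $w \mapsto (w, (\alpha_j \cdot w)_j)$ whose image is precisely $\Gamma$, and whose left inverse is the projection $\pi$. Since every $(w,u) \in \Trop(\hat\dcI_Q)$ satisfies each $\Trop(\hat\dcg_{j,Q})$ and therefore lies in $\Gamma$, the restriction of $\pi$ to $\Trop(\hat\dcI_Q)$ is a bijection onto $\Trop(\dcI_Q)$. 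Multiplicities are preserved because they agree with the scheme-theoretic multiplicities at the corresponding points of the isomorphic zero-dimensional varieties. Combining with the first step gives the desired isomorphism of weighted polyhedral complexes.

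The main obstacle I anticipate is rigorously justifying transversality in the first step. While the expected dimension is correct and the binomial hypersurfaces are mutually transverse, one must rule out non-generic intersection behavior between the linear cylinder $\Trop(\hat\dcI_{\lin,Q})$ and the graph $\Gamma$. I expect this to follow by combining the genericity of $Q$ with an application of \cref{lem:avoidance} to the discriminant locus in parameter space where transversality fails, but care is required to specify the Zariski-open dense set of parameters on which the conclusion holds.
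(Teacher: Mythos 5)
Your proposal is correct and follows essentially the same two-step strategy as the paper: first use transversality together with \cref{thm:transverseIntersection} to identify the stable intersection with $\Trop(\hat\dcI_Q)$, and then use the relation between $\hat\dcI_Q$ and $\dcI_Q$ to show the projection $\pi$ is an isomorphism. The only cosmetic difference is that you phrase the second step via the scheme isomorphism induced by the monomial map $x\mapsto(x,(x^{\alpha_j})_j)$, whereas the paper phrases it via the elimination-ideal identity $\hat\dcI_Q\cap\CCt[x^\pm]=\dcI_Q$; these are equivalent viewpoints, and your acknowledged concern about justifying transversality rigorously is one the paper also leaves at the level of assertion.
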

\begin{proof}
  As $\Trop(\hat \dcI_{\lin,Q})$ and the $\Trop(\hat \dcg_{j,Q})$ intersect transversally for generic $v$, we have $\Trop(\hat \dcI_{\lin,Q}) \wedge \bigwedge_{j=1}^m\Trop(\hat \dcg_{j,Q}) = \Trop(\hat \dcI_{Q})$ by \cref{thm:transverseIntersection}.  Moreover, it is straightforward to show that $\hat \dcI_Q\cap \CCt[x^\pm] = \dcI_Q$, which means that the projection in Equation~\eqref{eq:verticalModificationTropicalMap} is an isomorphism.
\end{proof}

Step \ref{algline:tropicalData} of \cref{alg:tropicalHomotopies} requires $V(\initial_{w}(\mathcal{I}_{Q}))$. For vertically parametrized systems, we can obtain generators of $\initial_{w}(\mathcal{I}_{Q})$ through a simple linear Gr\"obner basis computation.

\begin{lemma}\label{lem:verticalStartingSolutions}
  Suppose we have
  \begin{enumerate}
  \item $Q\coloneqq t^v\cdot P\in \CC\{\!\{t\}\!\}^{m}$ for some $P\in(\CC^\ast)^{m}$ and some $v\in\QQ^{m}$,
  \item \label{enumitem:verticalStartingSolutions2} $w\in \Trop(\dcI_Q)$,
  \item \label{enumitem:verticalStartingSolutions3} $\hat w\in \Trop(\hat \dcI_{\lin,Q}) \wedge \bigwedge_{j=1}^m\Trop(\hat \dcg_{j,Q})$ with $\pi(\hat w)=w$ as in \cref{lem:verticalTropicalization}, and
  \item $\hat G\subseteq\hat \dcI_{\lin,Q}$ a tropical Gr\"obner basis with respect to $\hat w$.
  \end{enumerate}
  Then $\initial_{w}(\mathcal{I}_{Q}) = \langle \initial_w(\hat g|_{y_j=x^{\alpha_j}})\mid \hat g\in \hat G\rangle$, where $(\cdot)|_{y_j=x^{\alpha_j}}$ denotes substituting all $y_j$ by $x^{\alpha_j}$.  In particular, $\{\hat g|_{y_j=x^{\alpha_j}}\}\subseteq \dcI_Q$ is a Gr\"obner basis with respect to $w$. 
\end{lemma}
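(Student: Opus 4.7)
The plan is to pass through the modification $\hat\dcF$ via the elimination homomorphism $\phi\colon\CCt[x^\pm,y^\pm]\to\CCt[x^\pm]$ defined by $y_j\mapsto x^{\alpha_j}$. This map is surjective with kernel $\hat\dcI_{\bin,Q}$ and satisfies $\phi(\hat\dcI_{\lin,Q})=\dcI_Q$; in particular $\hat\dcI_Q\cap\CCt[x^\pm]=\dcI_Q$. Writing $\hat w=(w,w')$, the assumption $\pi(\hat w)=w$ together with $\hat w\in\bigcap_j\Trop(\hat\dcg_{j,Q})$ forces $w'_j=w\cdot\alpha_j$ for every $j$, and this makes $\phi$ weight-preserving on monomials, since $\hat w\cdot(\alpha,\beta)=w\cdot\bigl(\alpha+\sum_j\beta_j\alpha_j\bigr)$.

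The first step is to invoke the transverse intersection at $\hat w$ (which holds for generic $v$ by the argument in the proof of \cref{lem:verticalTropicalization}) together with \cref{thm:transverseIntersection} to obtain
\[\initial_{\hat w}(\hat\dcI_Q)=\initial_{\hat w}(\hat\dcI_{\lin,Q})+\langle y_j-x^{\alpha_j}: j\in[m]\rangle,\]
using that $\initial_{\hat w}(\hat\dcg_{j,Q})=y_j-x^{\alpha_j}$ (since $w'_j=w\cdot\alpha_j$ and both terms have trivial valuation). The tropical Gr\"obner basis hypothesis on $\hat G$ then refines the first summand to $\langle\initial_{\hat w}(\hat g):\hat g\in\hat G\rangle$.

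Next, I would show the descent identity $\phi(\initial_{\hat w}(\hat\dcI_Q))=\initial_w(\dcI_Q)$. For $\subseteq$, the monomial-wise weight-preservation of $\phi$ implies that for any $\hat f\in\hat\dcI_Q$, the image $\phi(\initial_{\hat w}(\hat f))$ is either zero (when the minimum-weight terms cancel after substitution) or equal to $\initial_w(\phi(\hat f))$, and in either case lies in $\initial_w(\dcI_Q)$. For $\supseteq$, every $f\in\dcI_Q$ lies in $\hat\dcI_Q$ as a $y$-free element (by the elimination identity above), and for such an $f$ the equality $\phi(\initial_{\hat w}(f))=\initial_w(f)$ is immediate. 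Applying $\phi$ to the formula from the previous paragraph annihilates the binomial generators and yields
\[\initial_w(\dcI_Q)=\big\langle\initial_{\hat w}(\hat g)|_{y_j=x^{\alpha_j}}:\hat g\in\hat G\big\rangle.\]

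Finally, I would reconcile this with the stated form of the lemma. The same case analysis shows that for each $\hat g\in\hat G$, the element $\initial_{\hat w}(\hat g)|_{y_j=x^{\alpha_j}}$ is either equal to $\initial_w(\hat g|_{y_j=x^{\alpha_j}})$ or zero, while $\hat g|_{y_j=x^{\alpha_j}}\in\dcI_Q$ always. Hence
\[\big\langle\initial_{\hat w}(\hat g)|_{y_j=x^{\alpha_j}}\big\rangle\subseteq\big\langle\initial_w(\hat g|_{y_j=x^{\alpha_j}})\big\rangle\subseteq\initial_w(\dcI_Q),\]
and combined with the preceding equality all three coincide, giving the Gr\"obner basis statement at once. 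I expect the descent step $\phi(\initial_{\hat w}(\hat\dcI_Q))=\initial_w(\dcI_Q)$ to be the main obstacle, since the naive claim that $\phi$ commutes with $\initial$ fails whenever top-weight terms cancel after substitution; the trivial-lift argument made possible by $\dcI_Q\subseteq\hat\dcI_Q$ is what circumvents this and forces the two ideals to agree.
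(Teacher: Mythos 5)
Your proof is correct, and it follows the same broad route as the paper's argument (both pass through the modification $\hat\dcI$, use the relation $\hat w_j = w\cdot\alpha_j$ coming from the binomials $\hat\dcg_j$, and reduce to the linear ideal $\hat\dcI_{\lin,Q}$ via the tropical Gr\"obner basis $\hat G$). The genuine difference is in how you handle the interaction of the substitution $\phi\colon y_j\mapsto x^{\alpha_j}$ with initial forms. The paper's proof proceeds at the element level: given $g=\sum q_i\dcf_{i,Q}\in\dcI_Q$, it lifts to $\hat g=\sum q_i\hat\dcf_{i,Q}\in\hat\dcI_{\lin,Q}$ and asserts $\initial_w(\hat g|_{y_j=x^{\alpha_j}})=\initial_{\hat w}(\hat g)|_{y_j=x^{\alpha_j}}$ directly (attributing it to hypotheses (2) and (3)); it does not explicitly invoke \cref{thm:transverseIntersection}. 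Your argument instead works at the ideal level: you use \cref{thm:transverseIntersection} to decompose $\initial_{\hat w}(\hat\dcI_Q)$, establish the descent identity $\phi(\initial_{\hat w}(\hat\dcI_Q))=\initial_w(\dcI_Q)$ by observing that $\phi$ is weight-preserving (so $\phi(\initial_{\hat w}(\hat f))$ is either zero or equals $\initial_w(\phi(\hat f))$) together with the trivial-lift observation for the reverse inclusion, and then reconcile with the stated formula by a sandwich argument. This buys you robustness: you do not need to rule out cancellation of top-weight terms under $\phi$ (which, in the presence of repeated exponents in the multiset $S$ or collisions $\gamma+\alpha_j=\gamma'+\alpha_{j'}$, could a priori occur for specific lifts $\hat g$), because the sandwich $\langle\phi(\initial_{\hat w}(\hat g))\rangle\subseteq\langle\initial_w(\phi(\hat g))\rangle\subseteq\initial_w(\dcI_Q)=\langle\phi(\initial_{\hat w}(\hat g))\rangle$ forces all three ideals to coincide even when the element-wise equality is not manifest. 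One small caveat, which you flag yourself: your use of \cref{thm:transverseIntersection} at $\hat w$ requires the intersection of $\Trop(\hat\dcI_{\lin,Q})$ with $\bigwedge_j\Trop(\hat\dcg_{j,Q})$ to actually be transversal there, which holds for generic $v$ by the argument in \cref{lem:verticalTropicalization}; the lemma as stated leaves this genericity implicit, and the paper's proof implicitly relies on it as well.
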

\begin{proof}
  Note that
  \begin{equation*}
    \langle \initial_{\hat w}(\hat g)|_{y_j=x^{\alpha_j}}\mid \hat g\in \hat G\rangle = \langle \initial_{\hat w}(\hat g)\mid \hat g\in \hat G\rangle|_{y_j=x^{\alpha_j}} = \initial_{\hat w}(\hat \dcI_{\lin,Q})|_{y_j=x^{\alpha_j}}.
  \end{equation*}
  Hence, it suffices to show that $\initial_w(\dcI_Q)=\initial_{\hat w}(\hat \dcI_{\lin,Q})|_{y_j=x^{\alpha_j}}$.

  For the ``$\subseteq$'' inclusion, it suffices to consider elements of the form $\initial_w(g)\in\initial_w(\dcI_Q)$ for some $g=\sum_{i=1}^r q_i\dcf_{i,Q} \in \dcI_Q$ with $q_i\in\CCt[x^\pm]$.  Let $\hat g\coloneqq \sum_{i=1}^r q_i\hat \dcf_{i,Q}\in\dcI_{\lin,Q}$, so that $g=\hat g|_{y_j=x^{\alpha_j}}$.  Due to \eqref{enumitem:verticalStartingSolutions2} and \eqref{enumitem:verticalStartingSolutions3}, we then have $$\initial_w(g) = \initial_w(\hat g|_{y_j=x^{\alpha_j}})=\initial_{\hat w}(\hat g)|_{y_j=x^{\alpha_j}}\in \initial_{\hat w}(\hat \dcI_{\lin,Q})|_{y_j=x^{\alpha_j}}.$$

  For the ``$\supseteq$'' inclusion, consider $\hat g = \sum_{i=1}^r c_i \hat \dcf_{i,Q}\in \hat \dcI_{\lin,Q}$ with $c_i\in\CCt$.  As $\hat \dcI_{\lin,Q}$ is linear, it suffices to consider a linear generator $\hat g$.  Let $g\coloneqq \sum_{i=1}^r c_i  \dcf_{i,Q}$, so that $g=\hat g|_{y_j=x^{\alpha_j}}$.  Due to \eqref{enumitem:verticalStartingSolutions2} and \eqref{enumitem:verticalStartingSolutions3}, we again have \[\initial_{\hat w}(\hat g)|_{y_j=x^{\alpha_j}}=\initial_{\hat w}(\hat g|_{y_j=x^{\alpha_j}})=\initial_{w}(g)\in\initial_w(\dcI_Q).\qedhere\]
\end{proof}

\begin{example}
   Let $\dcF_{\mathrm{verti}}=\{\dcf_1,\dcf_2\}\subseteq\CC[a][x^\pm]$ be the vertically parametrized system from \cref{ex:twoCircleEmbeddings}, given by
  \begin{align*}
    \dcf_1 &= \phantom{3}a_1 x_1^2 + \phantom{3}a_2 x_2^2 + \phantom{5}a_3 x_1 + \phantom{7}a_4 x_2 + \phantom{11}a_5 \quad \text{and}\\
    \dcf_2 &= 3a_1 x_1^2 + 3a_2 x_2^2 + 5a_3 x_1 + 7a_4 x_2 + 11a_5,
  \end{align*}
  and consider its modification
  \begin{align*}
    \hat \dcf_1 &= \phantom{3}a_1 y_1 + \phantom{3}a_2 y_2 + \phantom{5}a_3 y_3 + \phantom{7}a_4 y_4 + \phantom{11}a_5 y_5, & \hat \dcg_1 &= y_1-x_1^2, & \hat \dcg_3 &= y_3-x_1. \\
    \hat \dcf_2 &= 3a_1 y_1 + 3a_2 y_2 + 5a_3 y_3 + 7a_4 y_4 + 11a_5y_5 & \hat \dcg_2 &= y_2-x_2^2,  & \hat \dcg_4 &= y_4-x_2, \\
    & & & & \hat \dcg_5 &= y_5-1.
  \end{align*}
  (Note that the introduction of $y_3, y_4, y_5$ is not strictly necessary, as their monomials are linear or constant. They can be omitted as a computational optimization.)

  For $Q\coloneqq(t, 1, 1, t, 1)\in\CCt^{m}$, we obtain $\Trop(\hat\dcI_{\lin,Q})$ and $\Trop(\hat\dcI_{\bin,Q})$ that intersect transversally in a single point $\hat w=0\in\RR^{7}$ of multiplicity $2$.  By \cref{lem:verticalTropicalization}, this means $\Trop(I_Q)=\{(0,0)\}$, and for $w=(0,0)$, \cref{alg:tropicalHomotopies} Line \ref{algline:homotopyConstruction} gives the homotopy
  \begin{align*}
    H^{(w)} = \Big( &t^0\cdot \big(\phantom{3}t^{1-0} x_1^2 + \phantom{3}t^{0-0} x_2^2 + \phantom{5}t^{0-0} x_1 + \phantom{7}t^{1-0} x_2 + \phantom{11}t^{0}\big), \\
                    &t^0\cdot\big( 3t^{1-0} x_1^2 + 3t^{0-0} x_2^2 + 5t^{0-0}x_1 + 7t^{1-0}x_2 + 11t^{0}\big)\Big)\\
    =\Big( &tx_1^2+x_2^2+x_1+tx_2+1,\;\; 3tx_1^2 + 3x_2^2 + 5x_1 + 7tx_2 + 11\Big).
  \end{align*}
  Note that $H^{(w)}=(\dcf_{1,Q},\dcf_{2,Q})$ due to $w=(0,0)$.  At $t=0$, we have the equations $x_2^2+x_1+1=0=3x_2^2+5x_1+11$, which are not binomial.  In order to obtain binomial equations, we can compute a (tropical) Gr\"obner basis of $\dcI_Q$ with respect to $w$ using \cref{lem:verticalStartingSolutions}, which is:
  \begin{equation*}
    g_1 \coloneqq x_1+tx_2+4 \quad\text{and}\quad g_2\coloneqq 4tx_1^2 + 4x_2^2 + 3x_1 + 2tx_2
  \end{equation*}
  Using these instead of the $\dcf_{i,Q}$ in \cref{alg:tropicalHomotopies} Line \ref{algline:homotopyConstruction} gives us a homotopy $H^{(w)}=(g_1,g_2)$ that is binomial at $t=0$.
\end{example}

We conclude this subsection, by noting that vertically parametrized polynomial systems exhibit all desirable properties for \cref{alg:tropicalHomotopies} that are discussed at the end of \cref{sec:tropicalHomotopies}.

\begin{theorem}
  \label{thm:verticalSystems}
  Let $\dcF$ be a generically zero-dimensional vertically parametrized system. Then $\dcF$ generically has binomial, complete intersection, and radical initials.
\end{theorem}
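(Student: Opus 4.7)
The plan is to lift to the modification $\hat{\dcF}$ from \cref{def:verticalModification} and reduce every claim about $\initial_w(\dcI_Q)$ to a claim about $\initial_{\hat w}(\hat{\dcI}_{\lin,Q})$ via the substitution $y_j \mapsto x^{\alpha_j}$ supplied by \cref{lem:verticalStartingSolutions}. The observation powering this approach is that once the binomial property is in hand, both the radical and complete-intersection properties will follow from general structural facts about binomial ideals.

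I would carry out the binomial step first, since that is where the substantive work lies. For generic $Q = t^v \cdot P$, the ideal $\dcI_Q$ is zero-dimensional, so $\Trop(\dcI_Q)$ is a finite set of points. By \cref{lem:verticalTropicalization}, each $w \in \Trop(\dcI_Q)$ lifts to a point $\hat w$ in the transverse stable intersection $\Trop(\hat{\dcI}_{\lin,Q}) \wedge \bigwedge_j \Trop(\hat{\dcg}_{j,Q})$, and for generic $v$ the point $\hat w$ lies in the relative interior of a maximal cell of $\Trop(\hat{\dcI}_{\lin,Q})$. Since $\hat{\dcI}_{\lin,Q}$ is a linear ideal in the variables $(x, y)$, the example at the end of \cref{sec:tropicalHomotopies} then yields that $\initial_{\hat w}(\hat{\dcI}_{\lin,Q})$ is binomial. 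Invoking \cref{lem:verticalStartingSolutions}, one obtains generators of $\initial_w(\dcI_Q)$ as the $y_j \mapsto x^{\alpha_j}$ images of these binomials: each binomial $c_1 y_i + c_2 y_j$ is sent to the binomial $c_1 x^{\alpha_i} + c_2 x^{\alpha_j}$, so $\initial_w(\dcI_Q)$ is itself binomial.

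Once binomial initials are established on an open dense set of parameters --- which I would assemble by observing that transversality in \cref{lem:verticalTropicalization}, zero-dimensionality of $\dcI_Q$, and relative-interior membership of the lifts $\hat w$ are each Zariski-open conditions combinable via \cref{lem:avoidance} --- the remaining two properties drop out. Radical initials follow directly from \cref{thm:binomialInitialsAreRadical}. For complete-intersection initials, I would observe that $\initial_w(\dcI_Q)$ is a zero-dimensional binomial ideal in the $n$-dimensional Laurent polynomial ring $\CC[x^\pm]$; by the structure theory of binomial ideals (\cite{EisenbudSturmfels1996}), such an ideal is a lattice ideal generated by exactly $n$ binomials forming a regular sequence, and is therefore a complete intersection.

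The main obstacle I anticipate is the bookkeeping required to upgrade the conclusion from \emph{sufficiently often} (Zariski-dense) to \emph{generically} (Zariski-open and dense), as demanded by \cref{def:genericallyBinomialInitials}. Transversality and relative-interior membership are open conditions for a fixed $w$, but $\Trop(\dcI_Q)$ itself varies with $Q$, so some care is needed to ensure the resulting set of good parameters is genuinely Zariski-open rather than just dense. A secondary technicality is verifying the structural claim used in the complete-intersection step, which may require a short direct argument to extract a regular sequence of $n$ binomials from the generators produced by \cref{lem:verticalStartingSolutions}.
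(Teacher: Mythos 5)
Your proof follows the paper's overall strategy almost exactly: both use \cref{lem:verticalTropicalization} to lift $w$ to $\hat w$, obtain binomiality of $\initial_{\hat w}(\hat{\dcI}_{\lin,Q})$ from linearity together with the fact that generically $\hat w$ lies in the relative interior of a maximal cell, push this forward through the $y_j\mapsto x^{\alpha_j}$ substitution of \cref{lem:verticalStartingSolutions}, and invoke \cref{thm:binomialInitialsAreRadical} for radicality. The one genuine divergence is the complete-intersection step. The paper deduces it directly and more elementarily: a minimal tropical Gr\"obner basis $\hat G$ of the linear ideal $\hat{\dcI}_{\lin,Q}$ has exactly $n$ elements (cardinality equals codimension for a linear ideal), so after substitution $\initial_w(\dcI_Q)$ is generated by $n$ binomials in a codimension-$n$ situation, hence is a complete intersection. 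You instead invoke the structure theory of binomial ideals: that a proper binomial ideal in $\CC[x^\pm]$ is a lattice ideal, and that a full-rank lattice ideal in the Laurent ring is cut out by $n$ binomials forming a regular sequence. This is correct (using Smith normal form one can normalize the lattice, and then Cohen--Macaulayness of the Laurent ring upgrades ``$n$ generators, codimension $n$'' to a regular sequence), and has the advantage of applying to any parametrized system with binomial initials rather than just vertical ones; but it leans on several structural facts about lattice ideals that the paper's Gr\"obner-basis count sidesteps entirely, and you yourself flag this as needing a short direct argument. You also raise the ``sufficiently often'' versus ``generically'' bookkeeping concern; the paper's proof is equally terse on this point and does not explicitly verify Zariski-openness, so this is a fair caveat rather than a gap in your argument relative to the paper's.
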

\begin{proof}
  Consider the generating sets $\{\initial_{\hat w}(\hat g|_{y_j=x^{\alpha_j}})\mid \hat g\in \hat G\}\subseteq\initial_w(\dcI_Q)$ of \cref{lem:verticalStartingSolutions}.  The complete intersection property follows from the fact that the cardinality of a (minimal) Gr\"obner basis $\hat G$ of a linear ideal $\hat \dcI_{\lin,Q}$ always equals their codimension, which is $n$.  To show binomiality, observe that for $Q=t^v\cdot P$ with $v\in\RR^n$ generic we may assume that $\hat w$ lies in the relative interior of a maximal polyhedron of $\Trop(\hat \dcI_{\lin,Q})$.  Hence the $\initial_{\hat w}(\hat \dcI_{\lin,Q})$ is binomial by \cite[Lemma 1]{BLMM17}, and therefore its Gr\"obner basis $\hat G$ can be chosen to be binomial.  Radicality then follows from \cref{thm:binomialInitialsAreRadical}.
\end{proof}

See  \cite{FeliuHenrikssonPascual23} for an alternative proof of generic radicality, as well as explicit conditions for when a vertically parametrized system is generically zero-dimensional.

\begin{remark}
  \label{rem:tropicalCompleteIntersection}
  In some cases, the $n$-codimensional tropical linear space $\Trop(\hat \dcI_{\lin,Q})$ is a \emph{tropical complete intersection}, namely
  \begin{equation}
    \label{eq:tropicalCompleteIntersection}
    \Trop(\hat \dcI_{\lin,Q})=\Trop(\hat h_1)\wedge\dots\wedge\Trop(\hat h_n) \text{ for linear } \hat h_1,\dots,\hat h_n\in\hat\dcI_{\lin,Q}.
  \end{equation}
  By \cite[Theorem 3.6.1]{MaclaganSturmfels15}, Condition~\eqref{eq:tropicalCompleteIntersection} holds if and only if the column matroid of the coefficient matrix $(c_{i,j}Q_j)_{i\in [n], j\in [m]}\in\CCt^{n\times m}$ is \emph{transversal}; see \cite[Section 2.2]{Bonin2010} for a definition. The latter can be tested in \textsc{polymake} \cite{polymake}, and, if the matroid is transversal, the transversal presentation that is computed during the test can be used to construct the $\hat h_i$.

  If Condition~\eqref{eq:tropicalCompleteIntersection} holds, then one can show that the generic root count of the original system $\dcF$ is the mixed volume of the Newton polytopes of the $\hat h_i|_{y_j=x^{\alpha_j}}\in\CCt[x^\pm]$:
  \begin{align*}
    & \ell_{\dcI,\CC(a)} \overset{\text{\cref{lem:verticalTropicalization}}}{=} \ell_{\hat \dcI,\CC(a)} \overset{\text{\cite[Theorem 4.6.8]{MaclaganSturmfels15}}}{=} \MV(\Delta(\hat h_i), \Delta(\hat \dcg_j)\mid i\in [n], j\in [m]) \\
    & \hspace{5mm}= \MV(\Delta(\hat h_i|_{y_j=x^{\alpha_j}}) \mid i\in [n]),
  \end{align*}
  where $\Delta({\cdot})$ denotes the Newton polytope, $\MV(\cdot)$ denotes the mixed volume, and the final equality follows from the fact that the mixed volume is invariant under toric reembeddings.
  Instead of using \cref{alg:tropicalHomotopies}, one can now simply construct polyhedral homotopies for the $\hat h_i|_{y_j=x^{\alpha_j}}\in\CCt[x^\pm]$. Note that the mixed volume of the $\hat h_i|_{y_j=x^{\alpha_j}}$ need not be the mixed volume of $\dcF_Q$.
\end{remark}

\subsection{Remarks on computational ingredients}
We close this section with a few remarks on some computations that are required for obtaining the necessary tropical data for \cref{alg:tropicalHomotopies} via \cref{lem:verticalTropicalization} and \cref{lem:verticalStartingSolutions}.

\begin{remark}[Tropical intersections]
  \label{rem:tropicalHomotopyContinuation}
  \cref{lem:verticalTropicalization} requires the computation of
  \begin{equation}
    \label{eq:tropicalIntersection}
    \Trop(\hat \dcI_{\lin,Q}) \wedge \displaystyle\bigwedge_{j=1}^m\Trop(\hat \dcg_{j,Q}).
  \end{equation}
  Constructing $\Trop(\hat \dcI_{\lin,Q})$ and intersecting it with $\bigwedge_{j=1}^m\Trop(\hat \dcg_{j,Q})$ are both known to be difficult tasks individually.  However, computing the intersection in Equation~\eqref{eq:tropicalIntersection} can be done faster than the sum of its constituencies.
  In \cite{Jensen16}, Jensen develops a tropical homotopy continuation approach for computing transverse intersection of $n$ hypersurfaces in $\RR^n$.  The resulting number is the mixed volume of the polynomials of the hypersurfaces \cite[Theorem 4.6.8]{MaclaganSturmfels15}.  Both \textsc{Gfan} \cite{gfan} and \textsc{HomotopyContinuation.jl} \cite{HomotopyContinuation.jl}, and by proxy also \textsc{Singular} \cite{Singular} and \textsc{Macaulay2} \cite{M2}, rely on it for mixed volume computations.

  Similar to homotopy continuation for polynomial systems, the basic idea of tropical homotopy continuation is to deform an easy starting intersection to a desired target intersection.  What makes this process possible in tropical geometry is the duality between tropical hypersurfaces and regular subdivisions of Newton polytopes.  As tropical linear spaces are dual to matroid subdivisions of matroid polytopes, this approach can in principle be generalized to intersections of hypersurfaces and tropical linear spaces as required in \cref{lem:verticalTropicalization}.  The first steps were done in \cite{DaiseyRen2024}.
\end{remark}

\begin{remark}[Tropical Gr\"obner bases]
  \label{rem:tropicalGroebnerBases}
  \cref{lem:verticalStartingSolutions} requires the computation of a set $G\subseteq I$ such that $\initial_w(I)=\langle \initial_w(g)\mid g\in G\rangle$ for an ideal $I\subseteq\CCt[x^\pm]$ and $w\in\Trop(I)$.  Such $G$ are also known as tropical Gr\"obner bases \cite[Section 2.4]{MaclaganSturmfels15}.
  Similar to classical Gr\"obner bases, they can be computed using Buchberger-like algorithms \cite{ChanMaclagan18,MarkwigRen20} or using \texttt{F4-/F5-}like algorithms \cite{Vaccon18}.

  Most importantly for our purposes, if $I$ is generated by linear polynomials, we can compute $G$ using a single Gaussian elimination on the Macaulay matrix of its generators \cite[Algorithm 3.2.2]{Vaccon18}.  Hence, we will generally consider tropical Gr\"obner bases of linear ideals a computational non-issue.
\end{remark}

\begin{remark}[Binomial systems]
  \label{rem:binomialSystemSolving}
  Homotopy continuation generally requires starting solutions for the path tracking.  Naturally, these starting solutions should come from systems that are easy to solve.  The starting systems produced by \cref{lem:verticalStartingSolutions} are binomial, similar to polyhedral homotopies.  Binomial systems are easily solvable as, modulo a change of coordinates that can be computed using a Smith normal form on the exponent matrix (see, e.g., \cite{ChenLi14}), any binomial system is of the form
  \[ x_1^{d_1}-c_1 = 0,\qquad \dots,\qquad x_n^{d_n}-c_n=0. \]
\end{remark}


\section{Horizontally parametrized polynomial systems}\label{sec:horizontalFamilies}
In this section, we discuss horizontally parametrized polynomial systems, which were prominently studied by Kaveh and Khovanskii \cite{KavehKhovanskii12} and many others using the theory of Newton--Okounkov bodies.  We show that the ideas from \cref{sec:verticalFamilies} are insufficient for addressing general systems of such type, and focus on two related types of parametrized polynomial systems instead:  One is a particular class of horizontally parametrized systems, the other is a relaxation of horizontally parametrized systems, i.e., a larger parametrized family of polynomial systems as in \cref{pro:TheWellKnownProposition}.

\begin{definition}
  \label{def:horizontalFamily}
   A \emph{horizontally parametrized} system with polynomial support $R=\{q_1,\ldots,q_m\}\subseteq\CC[x^\pm]$ is a parametrized system
  \begin{equation*}
    \dcF\coloneqq\{\dcf_1,\dots,\dcf_n\}\subseteq \CC[a][x^\pm]\coloneqq \CC[a_{i,j}\mid i\in[n], j\in [m]][x_1^\pm, \dots, x_n^\pm],
  \end{equation*}
  of the form
  \begin{equation}
    \label{eq:horizontalSystem}
    \dcf_i \coloneqq \sum_{j=1}^m c_{i,j}a_{i,j}q_j
  \end{equation}
  for some coefficients $c_{i,j}\in\CC$ (note that we allow $c_{i,j}=0$).
\end{definition}

For the remainder of~\cref{sec:horizontalFamilies}, we fix a horizontally parametrized system $\dcF\subseteq\CC[a][x^\pm]$ with polynomial support $R = \{q_1,\dots,q_m\} \subseteq \CC[x^\pm]$. We furthermore let $\dcI\coloneqq\langle \dcF\rangle \subseteq\CCt[a][x^\pm]$ be the ideal generated by $\dcF$.

It is commonplace to only consider solutions in a Zariski open space that depends on the polynomial support $R$ \cite[Definition 4.5]{KavehKhovanskii12}.   In \cref{lem:horizontalTropicalization} below, this is done by saturation.  Note that such systems indeed satisfy the requirements of \cref{alg:tropicalHomotopies}, see \cite[Theorem 4.9]{KavehKhovanskii12} or \cite[Proposition 6.9]{HelminckRen22}.

\subsection{The challenge of horizontally parametrized systems}\label{sec:horizontalFamiliesTropicalData}
In \cite{LeykinYu2019}, Leykin and Yu suggest considering the following modification.

\begin{definition}
  \label{def:horizontalModification}
  The \emph{modification} of a horizontally parametrized system $\dcF$ is
  \begin{equation*}
    \hat \dcF\coloneqq\{\hat \dcf_i, \hat \dcg_j \mid i\in [n], j\in[m] \} \subseteq \CC[a][x^\pm,y^\pm]\coloneqq \CC[a][x_i^\pm, y_j^\pm\mid i\in[n], j\in [m]]
  \end{equation*}
  where
  \begin{equation*}
    \hat f_i \coloneqq \sum_{j=1}^m c_{i,j}a_{i,j}y_j \quad\text{and}\quad \hat g_j \coloneqq y_j-q_j.
  \end{equation*}
\end{definition}

We will use the notation $\hat \dcI_\lin\coloneqq\langle \hat \dcf_i\mid i\in[n]\rangle$ and $\hat \dcI_\nlin\coloneqq\langle \hat \dcg_j\mid j\in[m]\rangle$ for the ideals generated by the $\hat \dcf_i$ and $\hat \dcg_j$ respectively in $\CCt[a][x^\pm,y^\pm]$, and set $\hat \dcI\coloneqq \hat \dcI_\lin+\hat \dcI_\nlin$.

As in \cref{lem:verticalTropicalization}, the tropicalization of the horizontal modification also decomposes and relates to the tropicalization of the original ideal.  The following lemma is an extension of \cite[Lemma 4]{LeykinYu2019}.

\begin{lemma}
  \label{lem:horizontalTropicalization}
  For a generic choice of parameters $Q\in \CCt^{m}$, we have an isomorphism of (zero-dimensional) weighted polyhedral complexes
  \begin{equation}
    \label{eq:horizontalModificationTropicalMap}
    \begin{array}{ccc}
      \RR^n & & \RR^n\times\RR^{R}\\
      \rotatebox[origin=c]{90}{$\subseteq$} & & \rotatebox[origin=c]{90}{$\subseteq$} \\
      \Trop(\dcI_Q:(\prod_{j=1}^m q_{j})^\infty)   & \overset{\cong}{\longleftrightarrow} & \Big(\displaystyle\bigwedge_{i\in [n]} \Trop(\hat \dcf_{i,Q})\Big) \wedge \Trop(\hat \dcI_{\nlin,Q}) \\[7mm]
      (w_i)_{i\in [n]} & \longmapsfrom & \big((w_i)_{i\in[n]},(w_q)_{q\in R}\big).
    \end{array}
  \end{equation}
\end{lemma}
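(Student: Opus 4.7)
The plan is to follow the pattern of \cref{lem:verticalTropicalization}: first establish a transverse stable-intersection formula for $\Trop(\hat\dcI_Q)$, then identify $\hat\dcI_Q \cap \CCt[x^\pm]$ with the saturation $\dcI_Q:(\prod_j q_j)^\infty$, and finally argue that the coordinate projection is an isomorphism of weighted polyhedral complexes. The main novelty relative to the vertical case is that $\hat\dcg_j = y_j - q_j$ is no longer binomial when $q_j$ is not a monomial, and that elimination now produces a saturation rather than the ideal itself.

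For the first step, I would verify that, for generic $Q$, the hypersurfaces $\Trop(\hat\dcf_{1,Q}),\ldots,\Trop(\hat\dcf_{n,Q})$ together with $\Trop(\hat\dcg_{1}),\ldots,\Trop(\hat\dcg_{m})$ meet transversally in $\RR^n\times\RR^m$. The $\hat\dcg_j$ are independent of $Q$ and lie in disjoint $y_j$-directions, so they are automatically mutually transverse; transversality with the $\Trop(\hat\dcf_{i,Q})$ can be arranged by varying $Q$, which freely shifts each $\Trop(\hat\dcf_{i,Q})$ in its $y$-coordinates. An iterated application of \cref{thm:transverseIntersection} would then give
\[ \Trop(\hat\dcI_Q) = \bigwedge_{i\in[n]}\Trop(\hat\dcf_{i,Q}) \wedge \Trop(\hat\dcI_{\nlin,Q}). \]

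For the second step, I would consider the surjection
\[ \phi\colon \CCt[x^\pm,y^\pm] \twoheadrightarrow \CCt[x^\pm][q_1^{-1},\ldots,q_m^{-1}], \quad y_j\mapsto q_j, \]
whose kernel is $\hat\dcI_{\nlin}$ and which sends $\hat\dcf_{i,Q}$ to $\dcf_{i,Q}$. This identifies $\hat\dcI_Q\cap\CCt[x^\pm]$ with $\dcI_Q:(\prod_{j=1}^m q_j)^\infty$ and shows that the coordinate projection $\pi\colon (\CCt^\ast)^{n+m}\to (\CCt^\ast)^n$ restricts to an isomorphism of tori from $V(\hat\dcI_Q)$ to $V(\dcI_Q:(\prod_j q_j)^\infty)$, with the graph morphism $x\mapsto (x, q_1(x),\ldots,q_m(x))$ as its inverse. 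Set-theoretic bijectivity of the induced tropical projection then follows because coordinatewise valuation commutes with $y_j = q_j(x)$.

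The hard part will be verifying that the tropical projection preserves multiplicities. Concretely, I would show that for $\hat w\in\Trop(\hat\dcI_Q)$ projecting to $w$, the map $\phi$ sends $\initial_{\hat w}(\hat\dcI_Q)$ to an ideal of the same length as $\initial_w(\dcI_Q:(\prod_j q_j)^\infty)$. The transversality from the first step should control the initial ideals, analogously to the second half of \cref{thm:transverseIntersection}, but one must still argue carefully that saturation and specialization commute with taking initial ideals in this generic setting. An alternative route is to invoke the compatibility of tropicalization with torus monomorphisms by applying the Structure Theorem to the graph of $x\mapsto (x, q(x))$; this is cleaner conceptually but relies on a weighted version of tropical projection along an isomorphism, which I would expect to be the main technical hurdle.
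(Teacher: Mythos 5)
Your proposal follows essentially the same route as the paper: transversality plus \cref{thm:transverseIntersection} to get the stable-intersection formula, then a ring isomorphism between $\CCt[x^\pm]/(\dcI_Q:(\prod_j q_j)^\infty)$ and $\CCt[x^\pm,y^\pm]/\hat\dcI_Q$ compatible with the coordinate projection (the paper checks well-definedness, surjectivity, and injectivity by hand; your identification of $\hat\dcI_\nlin$ as the kernel of the localization map $\phi$ is just a cleaner packaging of the same computation). The multiplicity-preservation step you flag as the ``main technical hurdle'' is also left implicit in the paper's proof, which treats the ring isomorphism along the monomial projection as sufficient; this is the standard compatibility of tropicalization (with multiplicities) under torus monomorphisms, so it is not a genuine gap in either argument.
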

\begin{proof}
  As the $\Trop(\hat \dcf_{i,Q})$ and $\Trop(\hat \dcI_{\nlin,Q})$ intersect transversally for generic $v$, we have $(\bigwedge_{i\in [n]} \Trop(\hat \dcf_{i,Q})) \wedge \Trop(\hat \dcI_{\nlin,Q}) = \Trop(\hat \dcI_{Q})$ by \cref{thm:transverseIntersection}.  To show that the projection in Equation~\eqref{eq:horizontalModificationTropicalMap} is an isomorphism, we need to show that there is an isomorphism
  \[ \bigslant{\CCt[x^\pm]}{\dcI_Q:(\prod_{j=1}^m q_{j})^\infty} \overset{\sim}{\longrightarrow} \bigslant{\CCt[x^\pm,y^\pm]}{\hat \dcI_Q},\quad \overline x\longmapsto \overline x.  \]

  To show that the map is well-defined, let $h\in \CCt[x^\pm]$ with $h\cdot q^\beta\in\dcI_Q$ for some $\beta\in\ZZ_{\geq 0}^m$.  As $\dcI_Q\subseteq\hat\dcI_Q$ as sets, we have $h\cdot q^\beta\in\hat\dcI_Q$, which implies $h\cdot y^\beta\in\hat\dcI_Q$ and consequently $h\in\hat\dcI_Q$.

  The map is clearly surjective, as $\overline x$ maps to $\overline x$, and $\overline q_j$ maps to $\overline q_j = \overline y$.

  To show that the map is injective, let $h\in \hat \dcI_Q\cap\CCt[x^\pm]$, say
  \begin{align*}
    h &= \Big(\sum_{i=1}^n h_{1,i}\cdot \hat \dcf_{i,Q}\Big) + \Big(\sum_{j=1}^m h_{2,j}\cdot \hat \dcg_{j,Q}\Big)\hspace{-8mm}&&\text{for some }h_{1,i}, h_{2,j}\in\CCt[x^\pm,y^\pm]\\
    \intertext{Substituting $y_j$ by $q_j$ on both sides (leaving the left side unchanged) then yields}
    h &= \Big(\sum_{i=1}^n h_{1,i}'\cdot \dcf_{i,Q}\Big)\hspace{-8mm} &&\text{for some }h_{1,i}'\in\CCt[x^\pm]
  \end{align*}
  showing that $h\in\dcI_Q$.
\end{proof}

\begin{example}
  \label{ex:twoCirclesHorizontalFamily}
  Consider $\dcF_{\mathrm{hori}}=\{\dcf_1,\dcf_2\}\subseteq\CC[b][x^\pm]$ from \cref{ex:twoCircleEmbeddings} given by
  \begin{equation*}
    \dcf_1\coloneqq b_1x_1^2+b_1x_2^2+b_2x_1+b_3x_2+b_4 \quad\text{and}\quad \dcf_2\coloneqq b_5x_1^2+b_5x_2^2+b_6x_1+b_7x_2+b_8,
  \end{equation*}
  which was to be solved for $P=(1,1,1,1,3,5,7,11)$, and its modification
  \begin{align*}
    \hat \dcf_1&\coloneqq b_1y_1+b_2y_2+b_3y_3+b_4y_4, & \hat \dcg_1&\coloneqq y_1-(x_1^2+x_2^2), & \hat\dcg_3&\coloneqq y_3-x_2,\\
    \hat \dcf_2&\coloneqq b_5y_1+b_6y_2+b_7y_3+b_8y_4, & \hat \dcg_2&\coloneqq y_2-x_1,& \hat\dcg_4&\coloneqq y_4-1.
  \end{align*}
  Note that all $\Trop(\hat\dcg_{j,Q})$ are intersecting transversally, which means $$\Trop(\hat\dcI_{\nlin,Q}) = \bigwedge_{j=1}^4\Trop(\hat\dcg_{j,Q}),$$ making it easy to compute.  Moreover, the introduction of $y_2, y_3, y_4$ is not strictly necessary as $q_2, q_3, q_4$ are variables or constants.  They can be omitted for the sake of optimization.

  For $Q\coloneqq (t^3, 1, t^2, t^3, 3t^2, 5t^2, 7t^3, 11t^2)$, we obtain $\Trop(\hat\dcI_{\lin,Q})$ and $\Trop(\hat\dcI_{\nlin,Q})$ that intersect transversally in a single point $\hat w=(2,0,0,2,0,0)$ of multiplicity $2$, as illustrated in \cref{fig:horizontalModificationIntersection}.
  By \cref{lem:horizontalTropicalization}, this means $\Trop(\dcI_Q)=\{(2,0)\}$, and for $w=(2,0)$ \cref{alg:tropicalHomotopies} Line~\ref{algline:homotopyConstruction} gives the homotopy
  \begin{align*}
    H^{(w)}=\Big( &t^{-2}\cdot \big(\phantom{3}t^{3+4}x_1^2 + \phantom{3}t^{3+0}x_2^2 + \phantom{5}t^{0+2}x_1 + \phantom{7}t^{2+0}x_2 + \phantom{11}t^3\big), \\
                  &t^{-2}\cdot\big( 3t^{2+4}x_1^2 + 3t^{2+0}x_2^2 + 5t^{2+2}x_1 + 7t^{3+0}x_2 + 11t^2\big)\Big)\\
           =\Big( &t^5x_1^2 + tx_2^2 + x_1 + x_2 + t,\;\; 3t^4x_1^2 + 3x_2^2 + 5t^2x_1 + 7tx_2 + 11\Big).
  \end{align*}
  At $t=0$, we have the binomial equations $x_1+x_2=0=3x_2^2+11$, which has two solutions, and at $t=1$ we obtain the target system $\dcF_{\mathrm{hori}}|_{b=1}$.

  In contrast, for $Q\coloneqq (1,t,t^2,1,1,t,t^2,1)$, we obtain $\Trop(\hat\dcI_{\lin,Q})$ and $\Trop(\hat\dcI_{\nlin,Q})$ that intersect transversally in a single point $w=-(1,1,0,1,1,0)$ of multiplicity $2$.
  By \cref{lem:horizontalTropicalization}, this means $\Trop(\dcI_Q)=\{-(1,1)\}$.  And for $w=-(1,1)$, Line~\ref{algline:homotopyConstruction} in \cref{alg:tropicalHomotopies} gives the homotopy
  \begin{align*}
    H^{(w)}=\Big( &t^2\cdot \big(\phantom{3}t^{0-2}x_1^2 + \phantom{3}t^{0-2}x_2^2 + \phantom{5}t^{1-1}x_1 + \phantom{7}t^{2-1}x_2 + 1\big), \\
                  &t^2\cdot\big( 3t^{0-2}x_1^2 + 3t^{0-2}x_2^2 + 5t^{1-1}x_1 + 7t^{2-1}x_2 + 11\big)\Big)\\
           =\Big( &x_1^2 + x_2^2 + t^2x_1 + t^3x_2 + t^2,\;\; 3x_1^2 + 3x_2^2 + 5t^2x_1 + 7t^3x_2 + 11t^2\Big).
  \end{align*}
  At $t=0$, we have the binomial equations $x_1^2 + x_2^2=0=3x_1^2 + 3x_2^2$, which is problematic as they cut out a one-dimensional solution set.  However the system has only two solutions for $t>0$ sufficiently small, and one can show that those solutions converge to the two solutions of $\initial_w(\dcI_Q)=\langle -2x_1-8, 3x_1^2 + 3x_2^2\rangle$ as $t$ goes to $0$.  See \cref{rem:startingSystems} for more details on homotopy continuation under such circumstances.

  Alternatively, consider the following two polynomials that form a (tropical) Gr\"obner basis of $\dcI_Q$ with respect to $w$:
  \begin{equation*}
    g_1 \coloneqq -2tx_1 - 4t^2x_2 - 8 \quad\text{and}\quad  g_2\coloneqq 3x_1^2 + 5tx_1 + 3x_2^2 + 7t^2x_2 + 11.
  \end{equation*}
  Using them instead of the $\dcf_{i,Q}$ in \cref{alg:tropicalHomotopies} Line~\ref{algline:homotopyConstruction} gives us
  \begin{align*}
    H^{(w)}=\Big( &t^0\cdot \big(-2t^{1-1}x_1 -4t^{2-1}x_2 - 8 \big), \\
                  &t^2\cdot\big( 3t^{0-2}x_1^2 + 3t^{0-2}x_2^2 + 5t^{1-1}x_1 + 7t^{2-1}x_2 + 11\big)\Big)\\
    =\Big( &-2x_1 -4tx_2 - 8,\;\; 3x_1^2 + 3x_2^2 + 5t^2x_1 + 7t^3x_2 + 11t^2\Big).
  \end{align*}
  At $t=0$, we obtain a binomial generating set of $\initial_w(\dcI_Q)$ used above.
\end{example}

\begin{figure}[t]
  \centering
  \begin{tikzpicture}
    \node (centerHypersurface) at (0,0)
    {
      \begin{tikzpicture}[x={(240:7mm)},y={(180:7mm)},z={(90:7mm)},every node/.style={font=\small}]
        \coordinate (w1) at (1.25,1.25,2.5); 
        \coordinate (w2) at (-1.25,-1.25,-2.5);
        \fill[blue,opacity=0.2] (w1) -- ($(w1)+(-0.333,1.667,-0.667)$) -- ($(w2)+(-0.333,1.667,-0.667)$) -- (w2) -- cycle;
        \draw (w1) -- ($(w1)+(-0.333,1.667,-0.667)$) -- ($(w2)+(-0.333,1.667,-0.667)$) -- (w2) -- cycle;
        \fill[blue!20] (w1) -- ($(w1)+(1.667,-0.333,-0.667)$) -- ($(w2)+(1.667,-0.333,-0.667)$) -- (w2) -- cycle;
        \draw (w1) -- ($(w1)+(1.667,-0.333,-0.667)$) -- ($(w2)+(1.667,-0.333,-0.667)$) -- (w2) -- cycle;
        \fill[blue,opacity=0.2] (w1) -- ($(w1)+(-1,-1,1)$) -- ($(w2)+(-1,-1,1)$) -- (w2) -- cycle;
        \draw (w1) -- ($(w1)+(-1,-1,1)$) -- ($(w2)+(-1,-1,1)$) -- (w2) -- cycle;
        \draw[very thick,darkblue]
        (-1.5,-1.5,-3) -- (1.5,1.5,3) node[anchor=south,xshift=-4mm] {$\RR\cdot (1,1,2)$};
        \draw[darkblue,->] (0,0,0) -- (3,0,0) node[anchor=north] (ex1) {$e_1$};
        \draw[darkblue,loosely dotted] (0,0,0) -- (0,2.25,0);
        \draw[darkblue,->] (0,2.25,0) -- (0,3,0) node[anchor=east] (ex2) {$e_2$};
        \draw[darkblue,->] (0,0,0) -- (0,0,3.5) node[anchor=south] (ey1) {$e_3$};
        \draw[fill=white]
        plot[mark=*,mark size=2.5pt] (1.5,0,0);
        \draw[fill=white]
        plot[mark=square*,mark size=2.5pt] (-1,-1,0);
      \end{tikzpicture}
    };
    \node[anchor=north,yshift=2mm,font=\small] (centerHypersurfaceText) at (centerHypersurface.south) {$\Trop(\dcI_{\nlin,Q})$};

    \node (leftCurve) at (-5,0)
    {
      \begin{tikzpicture}[x={(240:4mm)},y={(180:4mm)},z={(90:4mm)},every node/.style={font=\small}]
        \coordinate (v1) at (1,-1,0);
        \coordinate (v2) at (3,1,0);
        \draw[darkred,->]
        (v1) -- ++(-1.5,-1.5,-1.5) node[anchor=west] {$e_0$};
        \draw[darkred,->]
        (v1) -- ++(0,0,2) node[anchor=south] {$e_3$};
        \draw[darkred]
        (v1) -- (v2);
        \draw[darkred,->]
        (v2) -- ++(2,0,0) node[anchor=north] {$e_1$};
        \draw[darkred,->]
        (v2) -- ++(0,2,0) node[anchor=east] {$e_2$};
        \fill[darkred]
        (v1) circle (2.5pt)
        (v2) circle (2.5pt);
        \node[anchor=south east,darkred] at (v1) {$(1,-1,0)$};
        \node[anchor=north west,darkred] at (v2) {$(3,1,0)$};
        \draw[fill=white]
         plot[mark=*,mark size=2.5pt] (2,0,0);
      \end{tikzpicture}
    };
    \node[xshift=-50mm,font=\small] (leftCurveText) at (centerHypersurfaceText) {$\Trop(\dcI_{\lin,Q})$};
    \node[anchor=north,yshift=-0.5mm,font=\tiny] (leftCurveParam) at (leftCurveText.336) {$(t^3, 1, t^2, t^3, 3t^2, 5t^2, 7t^3, 11t^2)$};
    \node at (leftCurveParam.north) {\rotatebox{90}{$=$}};

    \node (rightCurve) at (5,0)
    {
      \begin{tikzpicture}[x={(240:4mm)},y={(180:4mm)},z={(90:4mm)},every node/.style={font=\small}]
        \coordinate (v) at (-1,-2,0);
        \draw[darkred,->]
        (v) -- ++(-1.5,-1.5,-1.5) node[anchor=west] {$e_0$};
        \draw[darkred,->]
        (v) -- ++(0,0,2) node[anchor=south] {$e_3$};
        \draw[darkred,->]
        (v) -- ++(2,0,0) node[anchor=north] {$e_1$};
        \draw[darkred,->]
        (v) -- ++(0,2,0) node[anchor=east] {$e_2$};
        \fill[darkred]
        (v) circle (2.5pt);
        \node[anchor=south west,darkred] at (v) {$(-1,-2,0)$};
        \draw[fill=white] plot[mark=square*,mark size=2.5pt] (-1,-1,0);
      \end{tikzpicture}
    };
    \node[xshift=50mm,font=\small] (rightCurveText) at (centerHypersurfaceText) {$\Trop(\dcI_{\lin,Q})$};
    \node[anchor=north,yshift=-0.5mm,font=\tiny] (rightCurveParam) at (rightCurveText.336) {$(1,t,t^2,1,3,5t,7t^2,11)$};
    \node at (rightCurveParam.north) {\rotatebox{90}{$=$}};
  \end{tikzpicture}
  \caption{The transverse intersections of \cref{ex:twoCirclesHorizontalFamily} illustrated in $\RR^3\cong\{e_{y_2}=e_{x_1},e_{y_3}=e_{x_2},e_{y_4}=0\}\subseteq \RR^{6}$ with $e_1\coloneqq e_{x_1}$, $e_2\coloneqq e_{x_2}$, $e_3\coloneqq e_{y_1}$, and $e_0\coloneqq -e_1-e_2-e_3$.}
  \label{fig:horizontalModificationIntersection}
\end{figure}

Unlike \cref{lem:verticalTropicalization} however, no fast approach is known for computing the intersection in \cref{lem:horizontalTropicalization}.  Unlike the binomial ideal $\hat \dcI_{\bin,Q}$ of \cref{lem:verticalTropicalization}, the non-linear ideal $\hat \dcI_{\nlin,Q}$ in \cref{lem:horizontalTropicalization} has no easily exploitable structure in general.  Computing $\Trop(\hat \dcI_{\nlin,Q})$ using current algorithms would require several Gr\"obner basis computations \cite{BJSST2007,MarkwigRen20}.

\begin{example}\label{ex:harderHorizontalSystem}
  Consider the following horizontally parametrized system that has generic root count $3$, polynomial support $R=\{(1 + x_1 + x_2)^3,(1 + x_1 + x_2)^2,x_1,1\}$, and suppose we want to solve it for $P=(1,1,1,1,2,3,5,7)\in \CC^{8}$:
  \begin{equation}
    \label{eq:horizontalSystemHard}
    \begin{aligned}
      \dcf_1 &= a_1 (1 + x_1 + x_2)^3 + a_2 (1 + x_1 + x_2)^2 + a_3x_1 + a_4 \\
      \dcf_2 &= a_5 (1 + x_1 + x_2)^3 + a_6 (1 + x_1 + x_2)^2 + a_7x_1 + a_8.
    \end{aligned}
  \end{equation}
  Its modification is given by
  \begin{align*}
    \hat\dcf_1 &= a_1y_1 + a_2y_2 + a_3y_3 + a_4y_4, & \hat\dcg_1 &= y_1-(1+x_1+x_2)^3,
    & \hat\dcg_3&=y_3-x_1,\\
    \hat\dcf_2 &= a_5y_1 + a_6y_2 + a_7y_3 + a_8y_4, & \hat\dcg_2 &= y_2-(1+x_1+x_2)^2,  & \hat\dcg_4&=y_4-1.
  \end{align*}
  We will continue this example in the upcoming sections.
\end{example}

\subsection{Supports with tropically transverse base}\label{sec:horzizontalFamilyTransverse}
One way to deal with the difficulty of horizontally parametrized systems is to impose extra conditions on the polynomial support. In \cite{HoltRen2023}, the authors focus on the following special case of horizontally parametrized systems.

\begin{definition}
[{\cite[Definition 9]{HoltRen2023}}]
  \label{def:transverseHorizontalFamily}
  We say the polynomial support $R=\{q_1,\dots,q_m\}\subseteq\CC[x^\pm]$ has a \emph{tropically transverse base}, if there are sets $S=\{b_1,\dots,b_l\}\subseteq\CC[x^\pm]$ and $B=\{\beta_1,\dots,\beta_l\}\subseteq\ZZ^m$ such that
  \begin{enumerate}
  \item $q_j=b^{\beta_j}=\prod_{k=1}^lb_k^{\beta_{j,k}}$ for $j\in [m]$,
  \item $\Trop(b_1), \dots, \Trop(b_l)$ intersect transversally.
  \end{enumerate}
\end{definition}

\begin{definition}[{\cite[Definition 10]{HoltRen2023}}]
  \label{def:transverseHorizontalModification}
  Let $\dcF$ have a polynomial support with tropically transverse base $S$, as in \cref{def:transverseHorizontalFamily}.  The \emph{(two-stage) modification} of $\dcF$ is defined to be
  \begin{align*}
    \hat\dcF &\coloneqq\{\hat\dcf_i,\hat\dcg_j,\hat\dch_k\mid i\in[n], j\in[m], k\in[l]\}\\
    &\hspace{15mm}\subseteq\CC[a][x^\pm,y^\pm,z^\pm]\coloneqq\CC[a][x_i^\pm,y_j^\pm,z_k^\pm\mid i\in[n], j\in[m], k\in[l]]
  \end{align*}
  where
  \begin{equation}
    \label{eq:transverseHorizontalModification}
    \hat \dcf_i \coloneqq \sum_{j=1}^m c_{i,j}a_j z_j,\quad \hat \dcg_j \coloneqq z_j-\prod_{k=1}^ly_k^{\beta_{j,k}},\quad \text{and}\quad \hat \dch_k \coloneqq y_k-b_k.
  \end{equation}
\end{definition}

  For the remainder of this subsection, we let $\hat \dcI\coloneqq\langle \hat\dcF \rangle\subseteq\CCt[a][x^\pm,y^\pm,z^\pm]$ be the parametrized ideal generated by $\hat\dcF$.

\begin{theorem}[{\cite[Theorem 2]{HoltRen2023}}]
  \label{thm:transverseHorizontalModification}
  With the notation above,
  \[ \ell_{\hat \dcI,\CC(a)} = \MV\Big(\Delta(\dcf_i),\Delta(\dcg_j),\Delta(\dch_k)\mid i\in[n], j\in[m], k\in[l]\Big). \]
\end{theorem}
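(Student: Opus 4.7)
The plan is to apply Bernstein's theorem to the modification $\hat{\dcF}$, which is a square system of $n+m+l$ Laurent polynomials in the $n+m+l$ torus variables $(x,y,z)$. Bernstein's theorem bounds the generic root count from above by the mixed volume of the Newton polytopes, with equality precisely when the system is Bernstein-generic for generic parameters $a$. Equivalently, by the Fundamental Theorem of tropical geometry combined with \cite[Theorem~4.6.8]{MaclaganSturmfels15}, it suffices to show that for generic $Q\in\CCt^{m}$ the tropical hypersurfaces $\Trop(\hat{\dcf}_{i,Q})$, $\Trop(\hat{\dcg}_{j,Q})$, $\Trop(\hat{\dch}_{k,Q})$ intersect transversally, so that $\Trop(\hat{\dcI}_Q)$ is their stable intersection, and the total multiplicity is the mixed volume.

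To carry this out, I would apply \cref{thm:transverseIntersection} in three stages that mirror the block structure of the modification~\eqref{eq:transverseHorizontalModification}. The first stage handles $\hat{\dch}_k = y_k - b_k(x)$, which depends only on the $(x,y)$-block: transversality of the $\Trop(\hat{\dch}_{k,Q})$ follows directly from the tropical transversality of the base $b_1,\dots,b_l$ assumed in \cref{def:transverseHorizontalFamily}, coupled with the fact that each $\hat{\dch}_k$ introduces the fresh variable $y_k$. The second stage handles the binomials $\hat{\dcg}_j = z_j - y^{\beta_j}$, whose tropicalizations are affine hyperplanes each pinning down a distinct fresh $z_j$-direction, making them transverse to one another and to the first stage. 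The third stage handles $\hat{\dcf}_i = \sum_j c_{i,j} a_{i,j} z_j$, which is linear in $z$ with generic parameters; for a generic choice of parameter valuations $v\in\QQ^{m}$, \cref{lem:avoidance} ensures that these generic tropical hyperplanes meet the accumulated complex in general position.

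The main obstacle will be the bookkeeping that propagates both the complete-intersection hypothesis and the transversality through the three stages, since \cref{thm:transverseIntersection} requires complete intersections on both sides of each intersection. The ideal $\langle \hat{\dcg}_j, \hat{\dch}_k\mid j\in[m], k\in[l]\rangle$ is visibly a complete intersection because the generators are triangular in the fresh leading variables $(z_j, y_k)$, so the first two stages pose no essential difficulty. Adjoining the $\hat{\dcf}_i$ requires a parameter-genericity argument — again via \cref{lem:avoidance} — to ensure the codimensions add correctly and no unexpected embedded or excess components appear. Once transversality is secured at each stage, the multiplicity formula from \cref{def:stableIntersection} together with the standard identification of the stable intersection of generic tropical hypersurfaces with the mixed volume yields the claimed identity $\ell_{\hat{\dcI},\CC(a)} = \MV(\Delta(\hat{\dcf}_i),\Delta(\hat{\dcg}_j),\Delta(\hat{\dch}_k)\mid i\in[n], j\in[m], k\in[l])$.
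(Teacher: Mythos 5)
The paper itself offers no proof of this statement: it is imported directly from \cite[Theorem 2]{HoltRen2023}, so there is no internal argument to compare your sketch against. Your outline does capture the natural strategy --- establish tropical transversality of all the modified hypersurfaces so that the total multiplicity of the stable intersection (which is the mixed volume by \cite[Theorem~4.6.8]{MaclaganSturmfels15}) agrees with the generic root count.

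The genuine gap is your Stage~1. You assert that transversality of $\Trop(\hat\dch_1),\dots,\Trop(\hat\dch_l)$ in the ambient $(x,y,z)$-space ``follows directly'' from transversality of $\Trop(b_1),\dots,\Trop(b_l)$ in $\RR^n$ plus the freshness of the $y_k$. This is precisely the substantive content, and the inference has to be argued, not declared. Each $\Trop(\hat\dch_k)$ has maximal cells of two types: cells where the minimum in $\min\big(w_{y_k},\,\trop(b_k)(w_x)\big)$ is attained at $y_k$, which are graphs $w_{y_k}=\trop(b_k)(w_x)$ over open cells of $\RR^n$; and cells where two terms of $b_k$ tie, which lie over $\Trop(b_k)$ and are free in the $w_{y_k}$-direction. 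At an intersection point of all $\Trop(\hat\dch_k)$, each index $k$ picks one type, and whenever a subset $B\subseteq[l]$ picks the second type, the $x$-projection lies in $\bigcap_{k\in B}\Trop(b_k)$; one needs that set to have codimension $|B|$ in $\RR^n$ to avoid excess-dimensional intersections. That is exactly where the tropically-transverse-base hypothesis gets consumed, and the bookkeeping has to be written out. A secondary issue: in Stage~3, \cref{lem:avoidance} does not by itself deliver transversality. You would first have to show that non-transversality of the tropical linear space $\bigwedge_i\Trop(\hat\dcf_{i,Q})$ against the Stage~1--2 complex is a proper Zariski-closed condition on the parameters, and only then may \cref{lem:avoidance} be invoked.
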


As a corollary of \cref{thm:transverseHorizontalModification}, polyhedral homotopies are optimal for System~\eqref{eq:transverseHorizontalModification}.  Furthermore, the polyhedral homotopies for the modified System~\eqref{eq:transverseHorizontalModification} can be reformulated to homotopies for the original System~\eqref{eq:horizontalSystem} by back-substituting the modification variables, as illustrated by the next example.

\begin{example}
  \label{ex:horizontalSystemHardTwoStageModification}
  The polynomial support of System~\eqref{eq:horizontalSystemHard} in \cref{ex:harderHorizontalSystem} has the tropically transverse base
  \[S=\{1+x_1+x_2,x_1\},\quad B=\{(3,0),(2,0),(0,1),(0,0)\}.\]
  Its two-stage modification is thus given by
  \begin{equation}
    \label{eq:horizontalSystemHardTwoStageModification}
    \begin{aligned}
      \hat\dcf_1 &= a_1z_1 + a_2z_2 + a_3z_3 + a_4z_4, & \hat\dcg_1 &= z_1-y_1^3,
      & \hat \dch_1 &= y_1-(1+x_1+x_2),\\
      \hat\dcf_2 &= a_5z_1 + a_6z_2 + a_7z_3 + a_8z_4, & \hat\dcg_2 &= z_2-y_1^2, & \hat \dch_2 &= y_2-x_1.\\
                 & & \hat\dcg_3&=z_3-y_2,\\
                 & & \hat\dcg_4&=z_4-1.
    \end{aligned}
  \end{equation}
  By \cref{thm:transverseHorizontalModification}, the generic root count of the modified system is its mixed volume, and polyhedral homotopies are optimal for solving it.

  Recall from \cref{ex:polyhedralHomotopies} that polyhedral homotopies require choosing valuations for all coefficients. For System~\eqref{eq:horizontalSystemHardTwoStageModification}, it actually suffices to simply choose $Q=(t^4, t^2, 1, 1, 2t^{11}, 3t^7, 5, 7t)$, which yields six tropical hypersurfaces that intersect transversally in two points:
  \begin{align*}
    &\Big(\bigwedge_{i=1}^2\Trop(\hat\dcf_{i,Q})\Big)\wedge \Big(\bigwedge_{j=1}^4 \Trop(\hat\dcg_{j,Q})\Big)\wedge \Big(\bigwedge_{k=1}^2 \Trop(\hat\dch_{k,Q})\Big)\\
    &\qquad = \Big\{ (1,-1,-1,1,-3,-2,1,0), (1,-2,-2,1,-6,-4,1,0) \Big\}.
  \end{align*}
  For $w=(1,-1,-1,1,-3,-2,1,0)$, the resulting homotopies from \cref{alg:tropicalHomotopies} Line~\ref{algline:homotopyConstruction} are:
  \begin{equation*}
    \begin{aligned}
      \hat h_1^{(w)} &= tz_1 + z_2 + tz_3 + z_4, & \hat h_3^{(w)} &= z_1-y_1^3,
      & \hat h_7^{(w)} &= y_1-(t+t^2x_1+x_2),\\
      \hat h_2^{(w)} &= 2t^7z_1 + 3t^4z_2 + 5z_3 + 7z_4, & \hat h_4^{(w)} &= z_2-y_1^2, & \hat h_8^{(w)} &= y_2-x_1.\\
                 & & \hat h_5^{(w)}&=z_3-y_2,\\
                 & & \hat h_6^{(w)}&=z_4-1.
    \end{aligned}
  \end{equation*}
  Note that we can undo the modification by substituting the $y$ and the $z$ to obtain a homotopy involving only the $x$:
  \begin{align*}
    h_1^{(w)} &= t(t+t^2x_1+x_2)^3 + (t+t^2x_1+x_2)^2 + tx_1 + 1,\\
    h_2^{(w)} &= 2t^7(t+t^2x_1+x_2)^3 + 3t^4(t+t^2x_1+x_2)^2 + 5x_1 + 7,
  \end{align*}
  which for $t=0$ yields a binomial system with $2$ solutions in $(\CC^\ast)^2$ (the latter may be seen easier by specializing the $\hat h_i^{(w)}$ at $t=0$).

  Similarly, for $w=(1,-2,-2,1,-6,-4,1,0)$, we obtain a homotopy that for $t=0$ yields a binomial system with $1$ solution in $(\CC^\ast)^2$.  The total number of starting solutions therefore equals the generic root count of System~\eqref{eq:horizontalSystemHard}, which is $3$.
\end{example}

\subsection{Support relaxation}\label{sec:horziontalFamilyRelaxation}
Another way to deal with the difficulty of horizontally parametrized systems, besides imposing extra conditions as in \cref{sec:horzizontalFamilyTransverse}, is by embedding them into a larger and easier parametrized family in the sense of \cref{pro:TheWellKnownProposition}.  For instance, the following relaxation ensures that the modification is Bernstein generic.

\begin{definition}
  \label{def:horizontalSystemRelaxed}
  Suppose that the polynomial support $R=\{q_1,\dots,q_m\}$ has the form
  \begin{math}
    q_j=\sum_{k=1}^l q_{j,k}x^{\alpha_k}\text{ with } q_{j,k}\in\CC
  \end{math}
  for some finite set of exponent vectors $S\coloneqq \{\alpha_1,\dots,\alpha_l\}\subseteq\ZZ^n$ and some coefficients $q_{j,k}\in\CC$ (note that we may have $q_{j,k}=0$).
  The \emph{relaxation} of $\dcF$ is the parametrized system
  \begin{equation*}
    \dcF^\sharp\coloneqq\{\dcf_1^\sharp,\dots,\dcf_n^\sharp\}\subseteq \CC[a,b][x^\pm]\coloneqq \CC[a_{i,j},b_{j,k}\mid i\in[n], j\in [m], k\in [l]][x_1^\pm, \dots, x_n^\pm],
  \end{equation*}
  where
  \begin{equation*}
    \dcf_i^\sharp \coloneqq \sum_{j=1}^m c_{i,j}a_{i,j} \underbrace{\sum_{k=1}^l q_{j,k}b_{j,k} x^{\alpha_k}}_{\eqqcolon \dcq_j^\sharp}.
  \end{equation*}
\end{definition}

We use $\dcI^\sharp\subseteq\CCt[a,b][x^\pm]$ to denote the ideal generated by $\dcF^\sharp$.

\begin{definition}
  \label{def:horizontalSystemRelaxedModification}
  The \emph{modification} of $\dcF^\sharp$ is the system
  \begin{equation*}
    \hat \dcF^\sharp\coloneqq\{\hat \dcf_i^\sharp, \hat \dcg_j^\sharp\mid i\in [n], j\in [m]\} \subseteq \CC[a,b][x^\pm,y^\pm]\coloneqq \CC[a,b][x_i^\pm, y_j^\pm\mid i\in[n], j\in [m]].
  \end{equation*}
  where
  \begin{equation*}
      \hat \dcf_i^\sharp \coloneqq \sum_{j=1}^m c_{i,j}a_{i,j}y_j \quad\text{and}\quad \hat \dcg_j^\sharp \coloneqq y_j-\dcq_j^\sharp.
  \end{equation*}
\end{definition}

We use $\hat \dcI_\lin^\sharp$ and $\hat \dcI_\nlin^\sharp$ to denote the ideals in $\CCt[a,b][x^\pm,y^\pm]$ generated by the $\hat \dcf_i^\sharp$ and $\hat \dcg_j^\sharp$, respectively, and set $\hat \dcI^\sharp = \hat \dcI_\lin^\sharp + \hat \dcI_\nlin^\sharp$.

\begin{corollary}
  \label{cor:horizontalSystemRelaxedModification}
  We have $\ell_{\hat \dcI^\sharp,\CC(a)}=\MV(\hat \dcf_i^\sharp,\hat \dcg_j^\sharp \mid i\in[n], j\in[m] )$.
\end{corollary}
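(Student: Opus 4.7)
The plan is to reduce the generic root count of $\hat{\dcI}^\sharp$ to a tropical intersection count, and apply the transverse intersection theorem together with the duality between transverse stable intersections and mixed volumes.

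Following the setup of \cref{alg:tropicalHomotopies}, I would first pick a generic $v\in\QQ^{nm+ml}$ together with a generic $P\in (\CC^\ast)^{nm+ml}$, and set $Q\coloneqq t^v\cdot P$. By \cref{rem:genericProperties}, it suffices to show that $\ell_{\hat{\dcI}^\sharp_Q,\CCt}$ equals the mixed volume $\MV(\Delta(\hat{\dcf}^\sharp_i),\Delta(\hat{\dcg}^\sharp_j)\mid i\in[n],j\in[m])$ for such generic $Q$, since generic root counts are invariant under extension of the ground field and equal their specialization at a generic choice of parameters.

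Next, I would argue that the tropical hypersurfaces $\Trop(\hat{\dcf}^\sharp_{1,Q}),\dots,\Trop(\hat{\dcf}^\sharp_{n,Q}),\Trop(\hat{\dcg}^\sharp_{1,Q}),\dots,\Trop(\hat{\dcg}^\sharp_{m,Q})$ intersect transversally in $\RR^{n+m}$. This is the central structural input provided by the relaxation: every nonzero non-unit coefficient of $\hat{\dcf}^\sharp_{i,Q}$ is of the form $c_{i,j}a_{i,j}$ with $a_{i,j}$ an independent parameter of $\hat{\dcI}^\sharp$, and every nonzero non-unit coefficient of $\hat{\dcg}^\sharp_{j,Q}$ is of the form $-q_{j,k}b_{j,k}$ with $b_{j,k}$ an independent parameter. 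Consequently, for generic $v\in\QQ^{nm+ml}$, the valuations of the coefficients of each hypersurface may be chosen freely and independently, placing all the induced regular subdivisions of the Newton polytopes $\Delta(\hat{\dcf}^\sharp_i)$ and $\Delta(\hat{\dcg}^\sharp_j)$ into general position. This is precisely the condition needed for transverse intersection of the corresponding tropical hypersurfaces.

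Iterating \cref{thm:transverseIntersection}, we then obtain
\begin{equation*}
\Trop(\hat{\dcI}^\sharp_Q) = \bigwedge_{i=1}^n \Trop(\hat{\dcf}^\sharp_{i,Q}) \;\wedge\; \bigwedge_{j=1}^m \Trop(\hat{\dcg}^\sharp_{j,Q}),
\end{equation*}
which is zero-dimensional, with total multiplicity equal to $\MV(\Delta(\hat{\dcf}_i^\sharp),\Delta(\hat{\dcg}_j^\sharp))$ by the standard tropical mixed-volume formula (\cite[Theorem~4.6.8]{MaclaganSturmfels15}). Since the valuation on $\CCt$ is non-trivial, the Fundamental Theorem identifies this weighted count with $\ell_{\hat{\dcI}^\sharp_Q,\CCt}$, which closes the argument. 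The main obstacle is the verification of transverse intersection; while the independent parameters make this essentially formal, one must carefully check that generic $v$ achieves the general-position condition not only on Newton polytope subdivisions individually, but simultaneously across all $n+m$ hypersurfaces — a routine but nontrivial step relying on the independence of the $a_{i,j}$ and $b_{j,k}$ introduced by the relaxation.
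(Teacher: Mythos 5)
Your proof is correct, and it is in essence the same argument the paper gives; the paper simply cites \cref{thm:transverseIntersection} and leaves the remaining details (transversality for generic parameters, the tropical Bernstein identification of the degree of the stable intersection with the mixed volume, and the specialization step) implicit, exactly as you have spelled them out. One minor point worth being careful about: the coefficient of $y_j$ in $\hat\dcg_j^\sharp$ is the fixed constant $1$, so strictly speaking not \emph{all} coefficient valuations can be chosen freely; however, since a tropical hypersurface is invariant under a simultaneous shift of all coefficient valuations, fixing one coefficient's valuation is just a normalization and costs no generality, so your transversality argument goes through.
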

\begin{proof}
  This follows directly from \cref{thm:transverseIntersection}.
\end{proof}

\begin{example}
  \label{ex:horizontalSystemHardRelaxed}
  The relaxed horizontal modification for the horizontal system form \Cref{ex:harderHorizontalSystem} is given by
  \begin{equation}
    \label{eq:horizontalSystemHardRelaxed}
    \begin{aligned}
      \hat\dcf_1 &= a_1y_1 + a_2y_2 + a_3y_3 + a_4y_4,\\
      \hat\dcf_2 &= a_5y_1 + a_6y_2 + a_7y_3 + a_8y_4,\\
      \hat\dcg_1 &= y_1-(b_{1,1}x_1^3 + 3b_{1,2}x_2x_1^2 + 3b_{1,3}x_1^2 + 3b_{1,4}x_2^2x_1 + 6b_{1,5}x_2x_1 + 3b_{1,6}x_1\\
                 &\hspace{17mm} + b_{1,7}x_2^3 + 3b_{1,8}x_2^2 + 3b_{1,9}x_2 + b_{1,10}),\\
      \hat\dcg_2 &= y_2-(b_{2,1}x_1^2 + 2b_{2,2}x_2x_1 + 2b_{2,4}x_1 + b_{2,5}x_2^2 + 2b_{2,6}x_2 + b_{2,7}),\\
      \hat\dcg_3&=y_3-b_{3,1}x_1,\\
      \hat\dcg_4&=y_4-b_{4,1}.
    \end{aligned}
  \end{equation}
  By \cref{cor:horizontalSystemRelaxedModification}, the generic root count of the relaxed modification is equal to its mixed volume, which is $6$, exceeding the generic root count of the original System~\eqref{eq:horizontalSystemHard} but staying below the original mixed volume of $9$.  Similar as in \cref{ex:horizontalSystemHardTwoStageModification}, we can construct polyhedral homotopies for System~\eqref{eq:horizontalSystemHardRelaxed} and undo the modification by substituting the $y$ to obtain a homotopy purely in the $x$, tracing only $6$ paths instead of the $9$ paths of polyhedral homotopies.
\end{example}

We conclude this section with a short proof that the generic root count of relaxed system never exceeds the mixed volume of the original system, i.e., that the homotopies that one obtains from the polyhedral homotopies of the modified system will never involve more paths than the polyhedral homotopies of the original system.

\begin{proposition}
  Let $\dcI$ be the horizontally parametrized ideal from \cref{def:horizontalFamily}, let $\hat\dcI$ be its modification from \cref{def:horizontalModification}, let $\dcI^\sharp$ be its relaxation from \cref{def:horizontalSystemRelaxed}, and let $\hat \dcI^\sharp$ be its relaxed modification from \cref{def:horizontalSystemRelaxedModification}. Then the generic root counts satisfy the  following chain of inequalities:
  \[\ell_{\dcI:(\prod_{i=1}^m q_i)^\infty,\CC(a)}=\ell_{\hat\dcI,\CC(a)}\leq\ell_{\hat\dcI^\sharp,\CC(a,b)}\leq\ell_{\dcI^\sharp,\CC(a,b)}\leq\MV(\dcF). \]
\end{proposition}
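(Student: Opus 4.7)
The plan is to verify each link of the chain separately using tools already in hand.

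For the opening equality, I would appeal directly to \cref{lem:horizontalTropicalization}, whose proof constructs an explicit isomorphism $\CCt[x^\pm]/(\dcI_Q:(\prod_j q_j)^\infty)\cong\CCt[x^\pm,y^\pm]/\hat\dcI_Q$ at a generic $Q$; comparing $\CCt$-dimensions on the two sides yields the equality of generic root counts.

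For $\ell_{\hat\dcI,\CC(a)}\leq\ell_{\hat\dcI^\sharp,\CC(a,b)}$, the strategy is to exhibit $\hat\dcI$ as embedded in $\hat\dcI^\sharp$ and then apply \cref{pro:TheWellKnownProposition}. The ring homomorphism $\CC[a,b]\to\CC[a]$ that fixes each $a_{i,j}$ and sends each $b_{j,k}\mapsto 1$ maps $\hat\dcf_i^\sharp\mapsto\hat\dcf_i$ and $\hat\dcg_j^\sharp=y_j-\sum_k q_{j,k}b_{j,k}x^{\alpha_k}\mapsto y_j-q_j=\hat\dcg_j$. Both ideals have $n+m$ generators in $n+m$ torus variables, so they are generic complete intersections and \cref{pro:TheWellKnownProposition} applies. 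For $\ell_{\hat\dcI^\sharp,\CC(a,b)}\leq\ell_{\dcI^\sharp,\CC(a,b)}$, I would apply \cref{lem:horizontalTropicalization} a second time, now to the relaxed system $\dcF^\sharp$, to obtain the equality $\ell_{\hat\dcI^\sharp,\CC(a,b)}=\ell_{\dcI^\sharp:(\prod_j\dcq_j^\sharp)^\infty,\CC(a,b)}$. Since $\dcI^\sharp\subseteq\dcI^\sharp:(\prod_j\dcq_j^\sharp)^\infty$, the quotient by the saturation has at most the $\CCt$-dimension of the quotient by $\dcI^\sharp$, giving the inequality.

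The final inequality $\ell_{\dcI^\sharp,\CC(a,b)}\leq\MV(\dcF)$ is Bernstein's theorem applied to $\dcF^\sharp$, once one checks that $\Delta(\dcf_i^\sharp)=\Delta(\dcf_i)=\conv(\bigcup_{j:\,c_{i,j}\neq 0}\supp(q_j))$ for generic $a,b$. The main subtlety I anticipate lies in the third step: \cref{lem:horizontalTropicalization} requires genericity of the parameters of the system to which it is applied, and I expect this to be automatic for the relaxed $\dcF^\sharp$ because of the independent parameters $b_{j,k}$ attached to every monomial of its polynomial support, but this is the point most deserving of a careful check.
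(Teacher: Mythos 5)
Your proposal is correct and follows essentially the same route as the paper: \cref{lem:horizontalTropicalization} for the opening equality, \cref{pro:TheWellKnownProposition} (via the specialization $b_{j,k}\mapsto 1$) for $\ell_{\hat\dcI}\leq\ell_{\hat\dcI^\sharp}$, and the isomorphism of \cref{lem:horizontalTropicalization} re-run on $\dcF^\sharp$ together with the fact that saturation cannot increase the root count for $\ell_{\hat\dcI^\sharp}\leq\ell_{\dcI^\sharp}$. The one small cosmetic difference is the last link: you invoke Bernstein's theorem on $\dcF^\sharp$ directly after checking $\Delta(\dcf_i^\sharp)=\Delta(\dcf_i)$, whereas the paper routes it again through \cref{pro:TheWellKnownProposition} (implicitly, by embedding into the system with independent coefficients on each monomial); these are two packagings of the same argument. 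Your flagged concern is correctly located: $\dcF^\sharp$ is not literally a horizontally parametrized system in the sense of \cref{def:horizontalFamily} because $\dcq_j^\sharp\in\CC[b][x^\pm]$ rather than $\CC[x^\pm]$, so one either regards $b$ as generic and works over $\CC(b)$, or, as the paper does, simply reproves the coordinate-ring isomorphism by the same substitution argument.
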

\begin{proof}
  The first equation follows from \cref{lem:horizontalTropicalization}, whereas the first and third inequality follow from \cref{pro:TheWellKnownProposition}.
  For the second inequality, notice that for all $Q\in(\CC^{\ast})^{m}$ there is an isomorphism
  \begin{equation*}
    \bigslant{K[x^\pm]}{\dcI^\sharp_Q\colon \big(\prod_{j=1}^m\dcq_{j,Q}\big)^\infty} \longrightarrow K[x^\pm,y^\pm] / \hat \dcI_Q^\sharp, \qquad \overline x \longmapsto \overline x,
  \end{equation*}
  which can be proven with the same arguments as in the proof of \cref{lem:horizontalTropicalization}, and that by definition root counts can only decrease with saturation.  In other words, for all $Q\in(\CC^{\ast})^{m}$ the root count of $\hat \dcI_Q^\sharp$ is smaller or equal to that of $\dcI_Q^\sharp$.
\end{proof}

\begin{remark}
  Using \cite[Proposition 5.16]{HelminckRen22}, one can also show that if $\dcI$ is generically zero-dimensional, then $\hat \dcI^\sharp$ has the same generic root count as $\dcI^\sharp$.
\end{remark}


\section{Case Studies}\label{sec:caseStudies}
In this section, we revisit several examples of parametrized systems in existing literature, and show how our techniques can be used in their context.  Notebooks with computations for the case studies can be found in the repository
\begin{center}
    \url{https://github.com/oskarhenriksson/TropicalHomotopies.jl}\,.
\end{center}

\subsection{Steady states of chemical reaction networks}\label{sec:WNT}
Consider the chemical reaction network of the WNT pathway as studied in \cite{Gross2016wnt}:
\begin{equation*}
  \begin{array}{l@{\hspace{2cm}}l}
    \ce{X_{1} <=>[$k_1$][$k_2$] X_{2}} & \ce{X_{3} + X_{6} <=>[$k_{14}$][$k_{15}$] X_{15} ->[$k_{16}$] X_{3} + X_{7}} \\
    \ce{X_{2} + X_{4} <=>[$k_3$][$k_4$] X_{14} ->[$k_5$] X_{2} + X_{5}} & \ce{X_{7} + X_{9} <=>[$k_{17}$][$k_{18}$] X_{17} ->[$k_{19}$] X_{6} + X_{9}} \\
    \ce{X_{5} + X_{8} <=>[$k_6$][$k_7$] X_{16} ->[$k_8$] X_{4} + X_{8}} & \ce{X_{6} + X_{11} <=>[$k_{20}$][$k_{21}$] X_{19} ->[$k_{22}$] X_{6} + \emptyset} \\
    \ce{X_{4} + X_{10} <=>[$k_9$][$k_{10}$] X_{18} ->[$k_{11}$] X_{4} + \emptyset} & \ce{X_{11} ->[$k_{23}$] \emptyset} \\
    \ce{{\emptyset} ->[$k_{12}$] X_{10}} & \ce{X_{11} + X_{12} <=>[$k_{24}$][$k_{25}$] X_{13}} \\
    \ce{X_{10} ->[$k_{13}$] \emptyset} & \ce{X_{2} <=>[$k_{26}$][$k_{27}$] X_{3}} \\
    \ce{X_{5} <=>[$k_{28}$][$k_{29}$] X_{7}} & \ce{X_{10} <=>[$k_{30}$][$k_{31}$] X_{11}}.
  \end{array}
\end{equation*}
The steady states of chemical reaction networks under mass action kinetics can be described by systems that are close to being vertically parametrized (in \cite{FeliuHenrikssonPascual23} these systems are referred to as \emph{augmented} vertically parametrized systems, see also \cite{RoseTelek24} for a discussion from the point of view of positive tropicalizations).

For the WNT pathway, we obtain a square parametrized polynomial system with $19$ variables $x$ and $36$ parameters $k,c$:
\allowdisplaybreaks
\begin{align}
  \nonumber \dcf_1 &= k_{1} x_{1} - (k_{2}+ k_{26}) x_{2} + k_{27} x_{3} - k_{3} x_{2} x_{4} +  (k_{4}+k_{5}) x_{14} \\
  \nonumber \dcf_2 &= k_{26} x_{2} - k_{27} x_{3} - k_{14} x_{3} x_{6} + (k_{15}+ k_{16}) x_{15} \\
  \nonumber \dcf_3 &= -k_{28} x_{5} + k_{29} x_{7} - k_{6} x_{5} x_{8} + k_{5} x_{14} + k_{7} x_{16} \\
  \nonumber \dcf_4 &= -k_{14} x_{3} x_{6} - k_{20} x_{6} x_{11} + k_{15} x_{15} +  k_{19} x_{17} + (k_{21} + k_{22}) x_{19} \\
  \nonumber \dcf_5 &= k_{28} x_{5} - k_{29} x_{7} - k_{17} x_{7} x_{9} + k_{16} x_{15} +  k_{18} x_{17} \\
  \nonumber \dcf_6 &= k_{12} - (k_{13}+k_{30}) x_{10} - k_{9} x_{4} x_{10} + k_{31} x_{11} +  k_{10} x_{18} \\
  \nonumber \dcf_7 &= -k_{23}x_{11} + k_{30} x_{10} - k_{31} x_{11} - k_{20} x_{6} x_{11} - k_{24} x_{11} x_{12} +   k_{25} x_{13} + k_{21} x_{19} \\
  \nonumber \dcf_8 &= k_{24} x_{11} x_{12} - k_{25} x_{13} \\
  \nonumber \dcf_9 &= k_{3} x_{2} x_{4} - (k_{4} + k_{5}) x_{14} \\
  \label{eq:wnt} \dcf_{10} &= k_{14} x_{3} x_{6} - (k_{15} + k_{16}) x_{15} \\
  \nonumber \dcf_{11} &= -k_{6} x_{5} x_{8} + (k_{7} + k_{8}) x_{16} \\
  \nonumber \dcf_{12} &= k_{17} x_{7} x_{9} - (k_{18} - k_{19}) x_{17} \\
  \nonumber \dcf_{13} &= k_{9} x_{4} x_{10} - (k_{10} + k_{11}) x_{18} \\
  \nonumber \dcf_{14} &= k_{20} x_{6} x_{11} - (k_{21} + k_{22}) x_{19} \\
  \nonumber \dcf_{15} &= x_1+x_2+x_3+x_{14}+x_{15} - c_1 \\
  \nonumber \dcf_{16} &= x_4+x_5+x_6+x_7+x_{14}+x_{15}+x_{16}+x_{17}+x_{18}+x_{19} - c_2 \\
  \nonumber \dcf_{17} &= x_8+x_{16} - c_3 \\
  \nonumber \dcf_{18} &= x_9+x_{17} - c_4 \\
  \nonumber \dcf_{19} &= x_{12}+x_{13} - c_5.
\end{align}
The mixed volume of System~\eqref{eq:wnt} is 56, but the generic root count is only $9$ \cite[Theorem~1.1]{Gross2016wnt}.  We can embed System~\eqref{eq:wnt} into a vertically parametrized family by introducing new parameters for all monomials of the so-called \emph{conservation laws} $\dcf_{15},\dots,\dcf_{19}$ that lack parametric coefficients, and \cref{alg:tropicalHomotopies} will construct homotopies with exactly $9$ paths (see the GitHub repository for examples of such homotopies).
The general fact that this type of embedding does not increase the generic root count for any steady state system is the subject of an upcoming preprint.

The main bottleneck is the computation of the tropical intersection data for \Cref{alg:tropicalHomotopies} via \Cref{lem:verticalTropicalization}.  It consists of:
\begin{enumerate}
\item \label{enumitem:bottleneck1} the computation of $\Trop(\hat\dcI_{Q,\lin})$, which consists of 78\,983 maximal polyhedra,
\item \label{enumitem:bottleneck2} the intersection $\Trop(\hat\dcI_{Q,\lin})\wedge \Trop(\hat\dcI_{Q,\lin})$, which consists of $9$ points counted with multiplicity.
\end{enumerate}
The bottlenecks~\eqref{enumitem:bottleneck1} and~\eqref{enumitem:bottleneck2} required $43$ seconds and $21$ seconds, respectively, on a MacBook~Air with an Apple M2 chip and 16 GB of RAM.
We note that the matroid associated with $\Trop(\hat\dcI_{Q,\lin})$ is non-transversal, so \cref{rem:tropicalCompleteIntersection} does not apply.

\subsection{Periodic solutions to Duffing oscillators}\label{sec:Duffing}
In this section, we consider equations arising from coupled driven non-linear oscillators as discussed by Borovik, Breiding, del Pino, Micha{\l}ek, and Zilberberg in their work on semimixed systems of polynomial equations \cite{BBPMZ23}.  To be precise, we consider the following parametrized system in \cite[Section 6.2]{BBPMZ23} with variables $u_i,v_i$ and parameters $\omega_i$, $\alpha$, $\gamma$, $\lambda$:
\begin{equation*}
  \label{eq:oscillators}
  \begin{aligned}
    \left(\frac{\omega_{0}^{2}-\omega_1^{2}}{2}-\frac{\lambda\omega_{0}^{2}}{4}\right)u_1 + \frac{\gamma \omega_1}{2} v_1 + \frac{3}{8} \alpha u_1(u_1^2 + v_1^2) + \frac{3}{4} \alpha u_1(u_2^2+v_2^2)& = 0, \\
    \left(\frac{\omega_{0}^{2}-\omega_1^{2}}{2}+\frac{\lambda\omega_{0}^{2}}{4}\right)v_1 - \frac{\gamma \omega_1}{2} u_1 +\frac{3}{8} \alpha v_1(u_1^2 + v_1^2) +\frac{3}{4}\alpha v_1(u_2^2+v_2^2)& = 0,\\
    \frac{\left(\omega_{0}^{2}-\omega_2^{2}\right)}{2}u_2 + \frac{\gamma \omega_2}{2}v_2 +\frac{3}{8} \alpha u_2(u_2^2 + v_2^2) + \frac{3}{4} \alpha u_2(u_1^2+v_1^2)& = 0,\\
    \frac{\left(\omega_{0}^{2}-\omega_2^{2}\right)}{2}v_2 - \frac{\gamma \omega_2}{2}u_2 +\frac{3}{8} \alpha v_2(u_2^2 + v_2^2) +\frac{3}{4} \alpha v_2(u_1^2+v_1^2)& = 0.
  \end{aligned}
\end{equation*}
In \cite[Section 6.2]{BBPMZ23}, the authors relax the system to the following parametrized polynomial system with variables $u_i,v_i$ and parameters $a_{i,j}$:
\begin{equation}
  \label{eq:oscillatorsRelaxed}
  \begin{aligned}
    \dcf_1&\coloneqq a_{1,0}+a_{1,1}u_1 + a_{1,2}v_1 + a_{1,3} u_1(u_1^2 + v_1^2) + a_{1,4} u_1(u_2^2+v_2^2), \\
    \dcf_2&\coloneqq a_{2,0}+a_{2,1}u_1 + a_{2,2}v_1 + a_{2,3} v_1(u_1^2 + v_1^2) + a_{2,4} v_1(u_2^2+v_2^2),\\
    \dcf_3&\coloneqq a_{3,0}+a_{3,1}u_2 + a_{3,2}v_2 + a_{3,3} u_2(u_1^2+v_1^2) + a_{3,4} u_2(u_2^2 + v_2^2),\\
    \dcf_4&\coloneqq a_{4,0}+a_{4,1}u_2 + a_{4,2}v_2 + a_{3,3} v_2(u_1^2+v_1^2) + a_{4,4} v_2(u_2^2 + v_2^2),
  \end{aligned}
\end{equation}
and use their theory to deduce an upper bound of $25$ solutions for $\CC^4$.

One way to apply our techniques to System~\eqref{eq:oscillatorsRelaxed} is to note that it is a horizontal system with tropically transverse base.  By omitting some of the new variables $y,z$, its two-stage modification can be simplified to:
\begin{align}
  \label{eq:oscillatorsRelaxedModified}
  \nonumber \hat \dcf_1&\coloneqq a_{1,0}+a_{1,1}u_1 + a_{1,2}v_1 + a_{1,3} z_{1,1} + a_{1,4} z_{1,2} & \hat \dcg_1 &\coloneqq z_{1,1} - u_1y_1 & \hat \dch_1 &\coloneqq z_{1,2} - u_1y_2 \\
  \nonumber \hat \dcf_2&\coloneqq a_{2,0}+a_{2,1}u_1 + a_{2,2}v_1 + a_{2,3} z_{2,1} + a_{2,4} z_{2,2} & \hat \dcg_2 &\coloneqq z_{2,1} - v_1y_1 & \hat \dch_2 &\coloneqq z_{2,2} - v_1y_2 \\
  \nonumber \hat \dcf_3&\coloneqq a_{3,0}+a_{3,1}u_2 + a_{3,2}v_2 + a_{3,3} z_{3,1} + a_{3,4} z_{3,2} & \hat \dcg_3 &\coloneqq z_{3,1} - u_2y_1 & \hat \dch_3 &\coloneqq z_{3,2} - u_2y_2 \\
  \nonumber \hat \dcf_4&\coloneqq a_{4,0}+a_{4,1}u_2 + a_{4,2}v_2 + a_{4,3} z_{4,1} + a_{4,4} z_{4,2} & \hat \dcg_4 &\coloneqq z_{4,1} - v_2y_1 & \hat \dch_4 &\coloneqq z_{4,2} - v_2y_2 \\
  \hat \dck_1 &\coloneqq y_1 - (u_1^2+v_1^2)\qquad \hat\dck_2 \coloneqq y_2 - (u_2^2+v_2^2)
\end{align}
By \cref{thm:transverseHorizontalModification}, the generic root count is the mixed volume of the system above, which reconfirms the upper bound of $25$ in \cite{BBPMZ23}. Explicit homotopies constructed from this modification can be found in the GitHub repository.

We note that while the techniques in \cite{BBPMZ23} require the polynomial support to have a constant term (see \cite[Theorem 3.8]{BBPMZ23}), our techniques do not.  Hence, if one is only interested in the solutions inside the torus $(\CC^\ast)^4$, one can set $a_{i,0}=0$ in Systems~\eqref{eq:oscillatorsRelaxed} and \eqref{eq:oscillatorsRelaxedModified} to obtain a smaller upper bound of $16$.

\subsection{Equilibria of the Kuramoto model}\label{sec:Kuramoto}
In this section, we consider the Kuramoto equations with phase delays as discussed by Chen, Korchevskaia, and Lindberg in \cite[Section 2.4]{ChenKorchevskaiaLindberg2022}. For a graph $G$ with vertices $[n]$, these form a parametrized system with variables $x$ and parameters $\overline w$, $a$, $C$:
\begin{equation}
  \label{eq:kuramoto}
  f_{G,i} = \overline w_i - \sum_{j\in N_G(i)} a_{ij} \Big(\frac{x_iC_{ij}}{x_j}-\frac{x_j}{x_iC_{ij}}\Big) \quad\text{for } i\in [n],
\end{equation}
where  $N_G(i)$ denotes the set of vertices adjacent to the vertex $i$.

In \cite[Corollary 3.9]{ChenKorchevskaiaLindberg2022}, the authors show that the number of solutions equals the (normalized) volume of the adjacency polytope of $G$.  This result is beyond a simple application of our techniques. However, we can easily express the generic root count as a mixed volume, by viewing System~\eqref{eq:kuramoto} as a horizontal system with relaxed polynomial support as discussed in \cref{sec:horziontalFamilyRelaxation}.  Consider its modification:
\begin{equation}
  \label{eq:kuramotoModified}
  \begin{aligned}
    \hat \dcf_{G,i} &= \overline w_i - \sum_{j\in N_G(i)} a_{ij} y_{ij} &&\text{for } i\in [n],\\
    \hat \dcg_{ij} &= y_{ij} - \Big(\frac{x_iC_{ij}}{x_j}-\frac{x_j}{x_iC_{ij}}\Big) &&\text{for } \{ij\}\in E(G),
  \end{aligned}
\end{equation}
and observe that all resulting tropical hypersurfaces intersect transversally.  Hence, the generic root count of System~\eqref{eq:kuramoto} is the mixed volume of System~\eqref{eq:kuramotoModified}.

\subsection{Realizations of Laman graphs}\label{sec:graphRigidity}
In \cite{cggkls}, the authors study the problem of finding realizations of minimally 2-rigid so-called \emph{Laman graphs}, given generic assignments of the edge lengths. Such realizations correspond to solutions of a parametrized system consisting of equations that are either linear or linear in the reciprocals of the variables, which can readily be treated with our techniques. 

As an example, consider the Laman graph $G$ in \cref{fig:Laman}(a), for which the realizations correspond to the solutions of the system
\begin{equation}
  \label{eq:graphRigidity}
  \begin{aligned}
    \dcf_{1}&\coloneqq x_{12}+x_{23}-x_{13}, & \dcg_{1}&\coloneqq \lambda_{12}x_{12}^{-1}+\lambda_{23}x_{23}^{-1}-\lambda_{13}x_{13}^{-1}, & \dch&\coloneqq x_{23}-1,\\
    \dcf_{2}&\coloneqq x_{23}+x_{34}-x_{24}, & \dcg_{2}&\coloneqq \lambda_{23}x_{23}^{-1}+\lambda_{34}x_{34}^{-1}-\lambda_{24}x_{24}^{-1}. \\
  \end{aligned}
\end{equation}
with variables $x_{ij}$ encoding the edge directions, and parameters $\lambda_{ij}$  encoding the edge lengths.  Note that this  is a slightly simplified version of the system in \cite[Example 2.25]{cggkls}, where we have substituted $y_{ij}$ by $\lambda_{ij}x_{ij}^{-1}$, and $x_{ji}$ by $-x_{ij}$ for $i<j$.

Consider the parametrized ideals $\dcI\coloneqq\langle \dcf_{1},\dcf_{2},\dcg_{1},\dcg_{2},h_Q \rangle$, $\dcI_f\coloneqq \langle \dcf_{1},\dcf_{2} \rangle$, and $\dcI_g\coloneqq \langle \dcg_{1},\dcg_{2} \rangle$, and let $P=(1,1,1,1,1)$ be the target parameters. To compute the tropical data required for constructing the homotopies, note that for $Q=t^v\cdot P$ for generic $v\in\mathbb{Z}^{E(G)}$, it holds that
\[ \Trop(\dcI_Q) = \Trop(\dcI_{f,Q}) \wedge \Trop(\dcI_{g,Q}) \wedge \Trop(\dch_Q), \]
since the intersection is transversal, as sketched in \cref{fig:Laman}(b).  For $v_{ij}=i+j$, we obtain the following four intersection points, each of multiplicity $1$:
\begin{align*}
  (0, 1, 0, 1, 0),\;
  (0, 1, 0, 0, 2),\;
  (-2, -2, 0, 1, 0),\;
  (-2, -2, 0, 0, 2) \in \RR^{\{12,13,23,24,34\}}.
\end{align*}
This shows that the generic root count is equal to $4$, and the four intersection points can be used to compute the solutions of System~\eqref{eq:graphRigidity} for the parameters $P$ through \cref{alg:tropicalHomotopies}. For example, the intersection point $w=e_{13}+e_{24}$ yields the homotopy
\[H^{(w)}=\Big(x_{12} + x_{23} - t x_{13}, x_{23} + x_{34} - t x_{24}, t^2x_{23}^{-1} + x_{12}^{-1} - x_{13}^{-1}, t^2x_{34}^{-1} + x_{23}^{-1} - x_{24}^{-1}, -1 + x_{23}\Big),\]
and the binomial start system
\[\Big(x_{12} + x_{23}, x_{23} + x_{34}, x_{12}^{-1} - x_{13}^{-1}, x_{23}^{-1} - x_{24}^{-1}, -1 + x_{23}\Big),\]
which has a unique solution in the torus.  Homotopies for the other intersection points can be found in our GitHub repository.

\begin{figure}[t]
  \centering
  \begin{tikzpicture}
    \node (left)
    {
      \begin{tikzpicture}
        \node[draw,circle] (v1) at (0,0) {$1$};
        \node[draw,circle] (v2) at (-1,-1.5) {$2$};
        \node[draw,circle] (v3) at (1,-1.5) {$3$};
        \node[draw,circle] (v4) at (0,-3) {$4$};
        \draw
        (v1) -- (v2)
        (v1) -- (v3)
        (v2) -- (v3)
        (v2) -- (v4)
        (v3) -- (v4);
      \end{tikzpicture}
    };
    \node[anchor=north] at (left.south) {(a)};
    \node[anchor=west,xshift=5mm] (right) at (left.east)
    {
      \begin{tikzpicture}[x={(240:7mm)},y={(0:10mm)},z={(90:5mm)},every node/.style={font=\footnotesize}]
        \fill[blue!20]
        (-3,-3,0) -- (2,-3,0) -- (2,4,0) -- (-3,4,0) -- cycle;
        \node[anchor=north west,darkblue] at (2,-3,0) {$\Trop(\dch_Q)$};
        \draw[very thick,darkgreen]
        (0,0,0) -- ++(-3,-3,0)
        (0,0,0) -- ++(2,0,0)
        (0,0,0) -- node[above,pos=0.75] {$\Trop(\dcI_{f,Q})$} ++(0,4,0);
        \node[anchor=north west,darkgreen] at (0,0,0) {$(0,0,0)$};
        \fill[darkgreen] (0,0,0) circle (2pt);

        \draw[very thick,darkred]
        (-1,1,0) -- ++(3,3,0)
        (-1,1,0) -- ++(-2,0,0)
        (-1,1,0) -- node[below,pos=0.75] {$\Trop(\dcI_{g,Q})$} ++(0,-4,0);
        \node[anchor=south east,darkred] at (-1,1,0) {$(-1,1,0)$};
        \fill[darkred] (-1,1,0) circle (2pt);
        \fill[white,draw=black]
        (0,2,0) circle (2pt)
        (-1,-1,0) circle (2pt);
      \end{tikzpicture}
    };
    \node[anchor=north] at (right.south) {(b)};
  \end{tikzpicture}
  \caption{A Laman graph and its resulting tropical intersection}
  \label{fig:Laman}
\end{figure}
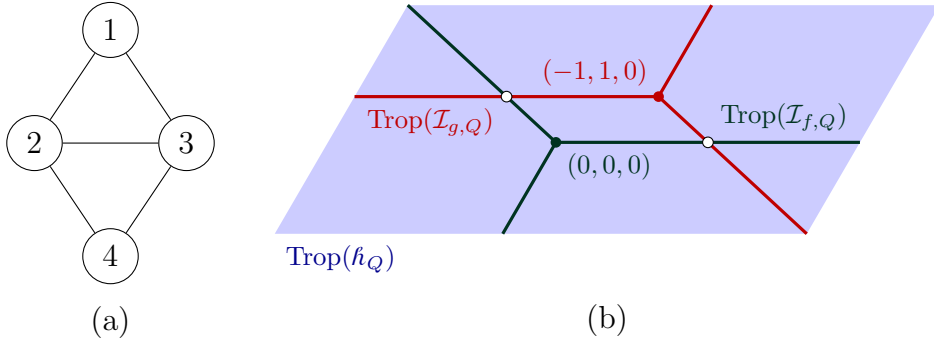


\section{Conclusion and outlook}\label{sec:conclusion}
In this work, we have explained how to construct generically optimal homotopies for para\-me\-trized polynomial systems from their tropical data, and listed conditions on the systems that result in desirable properties of the resulting homotopies (\cref{sec:tropicalHomotopies}).
We then discussed how said tropical data can be obtained for two classes of parametrized polynomial systems:  vertically parametrized systems (\cref{sec:verticalFamilies}) and horizontally parametrized systems (\cref{sec:horizontalFamilies}). 

The data of vertically parametrized systems can be obtained from the intersection of a tropical linear space and a tropical binomial variety.  In contrast, the data of horizontally parametrized systems is more difficult to obtain, and we have proposed two relaxations.
Finally, we have highlighted several examples from the literature where our techniques can be applied (\cref{sec:caseStudies}).

{\samepage
Two main challenges for the efficient application of our techniques remain:
\begin{enumerate}
\item develop and implement tropical homotopy continuation to speed up the computation of the required tropical data (see timings in \cref{sec:WNT}; ongoing work, see \cite{DaiseyRen2024} for a preliminary paper),
\item implement a numerically robust path tracker for tropical homotopies.
\end{enumerate}
}

Another future research direction is to identify more classes of parametrized systems whose generic root count is below their mixed volume and whose tropical data is efficiently computable.  Such classes are important targets for relaxation, independent of whether they arise directly in practise.

\renewcommand*{\bibfont}{\small}
\printbibliography
\end{document}